\documentclass[11pt,a4paper]{amsart}
\usepackage{amsmath}
\usepackage{amsfonts}
\usepackage{amssymb}
\usepackage{graphicx}
\usepackage[utf8]{inputenc}
\usepackage{pdflscape}
\usepackage{float}
\usepackage{mathtools}
\usepackage{appendix}
\usepackage{comment}
\usepackage[pagebackref=true, bookmarksopen=true, colorlinks=true, linkcolor=red, citecolor=blue]{hyperref}
\usepackage{epsfig}
\usepackage{tikz}
\usepackage{mathrsfs}

\numberwithin{equation}{section}
\makeatletter
\@namedef{subjclassname@2010}{\textup{2020} Mathematics Subject Classification}
\makeatother

\newtheorem{theorem}{Theorem}[section]
\newtheorem{definition}{Definition}[section]
\newtheorem{lemma}{Lemma}[section]
\newtheorem{proposition}{Proposition}[section]
\newtheorem{corollary}{Corollary}[section]
\newtheorem{remark}{Remark}[section]

\providecommand{\norm}[1]{\lvert\lvert#1\rvert\rvert}

\begin{document}
\title[Hierarchical Controllability ]{HIERARCHICAL LOCAL NULL CONTROLLABILITY
FOR A NON-DEGENERATE QUASI-LINEAR PARABOLIC SYSTEM 
}

\author[Cristian Loli]{Cristian Loli}  
	\address{CRISTIAN LOLI \hfill\break
	Universidade Federal Fluminense, Instituto de Matemática e Estatistica, Brazil.}
	\email{\url{cristianloli@id.uff.br}}
\author[George Bautista]{George J. Bautista} 
	\address{GEORGE J. BAUTISTA  \hfill\break
	Universidade Federal Fluminense, Instituto de Matemática e Estatistica, Brazil.}
	\email{\url{geojbs25@gmail.com}}   
\author[Juan Limaco]{Juan Limaco}  
	\address{JUAN LIMACO  \hfill\break
	Universidade Federal Fluminense, Instituto de Matemática e Estatistica, Brazil.}
	\email{\url{jlimaco@id.uff.br}}
\author[Rafael Lobosco]{Rafael Lobosco} 
	\address{RAFAEL LOBOSCO \hfill\break
	Instituto Federal do Rio de Janeiro, Brazil.}
	\email{\url{rafael.lobosco@ifrj.edu.br}}    
\author[Dany Nina]{Dany Nina-Huaman} 
    \address{DANY NINA-HUAMAN \hfill\break Universidad Nacional Agraria La  Molina, Peru.}
    \email{\url{danynina3003@gmail.com}}
\author[Luis Yapu]{Luis P. Yapu} 
	\address{LUIS P. YAPU \hfill\break
	Universidade Federal Fluminense, Instituto de Matemática e Estatistica, Brazil\\ \hfill\break
    Chair of Dynamics, Control, Machine Learning and Numerics, Friedrich-Alexander-Universit\"at Erlangen-N\"urnberg, Germany.}
	\email{\url{luis.yapu@gmail.com}}

    \subjclass{Primary: 93C20; Secondary: 93C10, 49J20.}
	\keywords{Parabolic quasi–linear system· Hierarchical null controllability· Stackelberg–Nash· Carleman inequalities.}


\numberwithin{equation}{section}

\begin{abstract}
    In this manuscript, we are concerned with  the hierarchical control of a  non-degenerate quasi-linear parabolic system, posed on a bounded interval. We use the Stackelberg–Nash strategy with a single control called  \textit{leader} and two controls called \textit{followers}.  First, we study the controllability for the linear system, where we will demonstrate a Carleman inequality associated with the adjoint problem. Then, using the results obtained for the linear case and applying Liusternik's Inverse Function Theorem, we obtain local null controllability for the quasi-linear problem.
 
\end{abstract}

\date{}

\maketitle

\section{ \bf Introduction } 
Let $I=(0,L)$  be an open and bounded interval of $\mathbb{R}$ and $\omega$,  $\omega_i$ $i=1, 2$, be a non empty open subsets of
$I$. For $T>0$, we denote by $Q=I\times(0,T)$. We consider  the following 
quasi-linear parabolic problem 
\begin{align}
	\begin{cases}	\label{Eq1}	
		y_{1,t}-
        (a(y_1)y_{1,x})_x + F_1(y_1)   = h\chi_\omega+v^1\chi_{\omega_1}+v^2\chi_{\omega_2}, &\text{ in } Q,\\  
		y_{2,t}-
        (a(y_2)y_{2,x})_x + F_2(y_2) + c y_1 = 0, &\text{ in } Q,\\  
        y_1(0,t) = y_1(L,t) =0,\,  y_2(0,t) = y_2(L,t) = 0, &\text{ in } (0,T), \\
        y_1(x,0) = f(x), \,  y_2(x,0) = g(x),  &\text{ in } I.
	\end{cases}
\end{align}Here, $y_1$ and $y_2$ are the states, the open subset $\omega \subset I$ is \emph{the main control domain}, the open subsets $\omega_1, \omega_2 \subset I$ are \emph{the secondary control domain}; $h$ and  $v^i$, $i=1,2$, are different
control functions whose actions are carried out in the system through the  subsets $\omega$  and $\omega_i$, respectively, where $h$ is the leader and $v^i$ are the followers,  while $\chi_{\omega}$ and $\chi_{\omega_i}$ denote the characteristic functions of $\omega$ and $\omega_i$, respectively, and  $(f, g)$ is the  initial data.

The controllability of partial differential equations has become one of the most active research areas in control theory due to its broad range of applications in engineering, physics, biology, economics, and environmental sciences. In many practical situations, the dynamics of the system are influenced by several decision-makers whose objectives may differ or even conflict. Such scenarios naturally motivate the development of hierarchical control strategies capable of coordinating multiple agents while preserving individual optimization goals.

Among the various frameworks proposed for this purpose, the Stackelberg–Nash strategy occupies a prominent position. Inspired by the pioneering ideas of Stackelberg \cite{Stack}, Nash \cite{Nash}, and Pareto \cite{Pareto}, this approach combines hierarchical decision-making with noncooperative optimization. Within this setting, a leader determines a global strategy, whereas several followers react optimally according to their own performance criteria. The followers are expected to reach a Nash equilibrium, and the leader subsequently exploits this equilibrium structure to achieve a prescribed global objective.

The mathematical analysis of hierarchical control problems governed by partial differential equations has attracted considerable attention over the last decades. Foundational contributions were established by Lions \cite{Lions, Lions-Stack}, who introduced Stackelberg-type strategies in distributed parameter systems and investigated hierarchical control problems for wave and parabolic equations. These pioneering works demonstrated that game-theoretic concepts can be successfully integrated into the framework of PDE controllability.

Subsequently, Díaz \cite{Diaz2002} developed a rigorous theoretical framework for Stackelberg–Nash games, establishing existence and uniqueness results for hierarchical equilibria and providing a precise mathematical characterization of the interactions between leaders and followers. In parallel, in \cite{Glowinski}, the authors studied Nash equilibrium associated with parabolic equations and proposed computational methodologies that contributed significantly to the practical implementation of hierarchical control strategies.

The development of controllability techniques for PDEs has been strongly supported by the theory of Carleman inequalities. Important contributions in this direction can be found in \cite{Cannarsa-2016}, \cite{Li-Clark-EFCara}, \cite{Fa-Zuazua}, \cite{EFC-Li2003}, \cite{EFC-Teresa-10}, \cite{Cara-Limaco-21}, \cite{EFC-Nina-Nu-Vi}, \cite{LiuZhang} and the references therein. 
The monographs by Fursikov and Imanuvilov \cite{FurImanu} and by Imanuvilov and Yamamoto \cite{Imanu-Yama} constitute fundamental references regarding observability inequalities and controllability methods for parabolic equations. These techniques have become the cornerstone for proving controllability properties of both linear and nonlinear distributed systems.

More recently, hierarchical control theory has experienced substantial progress. Limaco et al., in \cite{Li-Clark}, investigated Stackelberg–Nash strategies for parabolic equations in non-cylindrical domains with moving boundaries, showing that hierarchical controllability can be achieved even in geometrically evolving environments. Later, in \cite{Gui-Mar-Ro}, the authors extended hierarchical methodologies to the Stokes system, demonstrating the applicability of Stackelberg–Nash concepts to fluid mechanics models.

A particularly important advance was achieved by Araruna et al. \cite{Ara-EFC-San2015, Ara-EFC-Gue}, who obtained some of the first hierarchical controllability results in the context of exact controllability. Their work represented a significant step forward in the transition from approximate hierarchical strategies to rigorous controllability frameworks.

The literature has also witnessed a growing interest in Nash equilibrium problems and hierarchical controllability for parabolic systems with multiple objectives. In this direction, in \cite{Hernandez-Teresa-16}, it is studied multi-objective optimization problems governed by distributed systems, while, in \cite{Kere-Mer-Mop},  the authors analyzed Nash equilibrium strategies for nonlinear parabolic equations. Related developments were presented by Landry et al.(see  \cite{Landry2023}), in the context of distributed control problems involving several competing agents.

Particularly relevant to the present work are the contributions developed by Limaco and his collaborators. For example, in \cite{EFC-Li2003, Cara-Limaco-21}, obtained several controllability results for coupled parabolic systems using Carleman estimates and inverse mapping arguments. More recently, in \cite{GYL-Carleman-2025}, the authors investigated Stackelberg–Nash hierarchical strategies for parabolic systems, establishing null controllability results under multi-objective settings. Likewise, in \cite{Danny2022, Danny2021}, studied hierarchical Nash equilibria for nonlinear parabolic equations and developed analytical techniques combining optimality systems, convexity arguments and Carleman estimates. Similar approaches were further explored in \cite{Nunez-Li2023}, for the context of coupled distributed parameter systems.

These works reveal a common methodological structure: the characterization of Nash equilibria through suitable optimality systems, the derivation of observability inequalities by means of Carleman estimates, and the extension of controllability results to nonlinear settings through inverse mapping theorems. However, despite the significant progress achieved in recent years, most available results concern linear or semilinear systems. The study of hierarchical controllability for quasi-linear parabolic systems remains comparatively unexplored, mainly because nonlinearities appear in the principal part of the differential operators, creating substantial analytical difficulties.

Motivated by this situation, the present paper investigates a hierarchical null controllability problem for  coupled quasi-linear parabolic system  \eqref{Eq1}. The model consists of two coupled state equations whose diffusion coefficients depend on the state variables. The control mechanism includes a leader control acting on a region $\omega$ and two follower controls acting on  subdomains $\omega_1$ and $\omega_2$. For each fixed leader strategy, the followers minimize independent cost functionals and are required to reach a Nash equilibrium.

The main objective is to determine a leader control that, together with the associated Nash equilibrium of the followers, drives both state variables to zero at a prescribed final time. To accomplish this task, we first characterize the Nash equilibrium and establish convexity properties of the corresponding cost functionals. We then analyze the linearized optimality system and derive a suitable Carleman inequality for the associated adjoint problem. These estimates lead to the controllability of the linear hierarchical system. Finally, Lyusternik’s Inverse Function Theorem is employed to transfer the controllability properties of the linearized problem to the original quasi-linear model.

The remainder of the paper is organized as follows. Section $2$ introduces some notation and states the main result. Section $3$ is devoted to the characterization of the Nash equilibrium and the convexity analysis of the cost functionals. Section $4$ presents the linearized problem, Carleman estimates and the controllability result for the linear system. Finally, Section $5$ proves the local hierarchical null controllability of the original quasi-linear system through an application of Lyusternik’s Inverse Function Theorem.

\section{ \bf Notation and statement of  main results}\label{sec1}

 We first introduce a few considerations. Let us  suppose that $\omega,\omega_1,\omega_2\subset I$ be nonempty open subsets such that
\[
\omega_i\cap\omega=\varnothing,
\qquad i=1,2.
\]
We assume that the initial data satisfy
\[
(f,g)\in [L^2(I)]^2.
\]
We impose the following assumptions on the coefficients and nonlinearities.
The diffusion coefficient satisfies
    \begin{equation} \label{condiciones_a4}
    \begin{cases}
        a\in C^3(I),\quad 0 < M_0 \leq a(r) \leq M_1,\\
        |a^{(j)}(r)|\leq M,\quad j=1,2,3,
    \end{cases}
    \end{equation}
    for suitable positive constants $M_0,M_1,M$. Moreover,
    \begin{equation}
        \begin{cases} \label{condiciones_a4f}
 		 F_i\in C^2(\mathbb{R}), \quad F_i(0)=0,\\
             |{F_{i}^{(j)}(r)}|\leq M, \text{ for all } r \in \mathbb{R}, \, i=1,2,\quad j=1,2.
 	      \end{cases}
    \end{equation}
   Finally, the coupling coefficient $c\in L^\infty(Q)$ is assumed to satisfy
   \begin{equation}\label{condiciones_a4c(x,t)}
    0<c_0<c(x,t), \text{ in } (\omega_d\cap\omega)\times(0,T),
    \end{equation}
    where $c_0>0$ is a suitable constant and  the set $\omega_d$ is defined in Section 4.

 Under these assumptions, we investigate the hierarchical null controllability
of system \eqref{Eq1} through a Stackelberg--Nash strategy. More precisely, the
control $h$, acting on $\omega$, is regarded as the \emph{leader}, whereas
$v^1$ and $v^2$, acting respectively on $\omega_1$ and $\omega_2$, are the
\emph{followers}.

The objective is to determine a leader control $h$ such that, when the
followers respond according to a Nash equilibrium associated with their
individual cost functionals, the resulting state satisfies
\[
y_1(\cdot,T)=y_2(\cdot,T)=0
\qquad
\text{in }I.
\]
      
\subsection{Nash equilibrium}
    Let $\omega_{1,d},\omega_{2,d}\subset I$ be nonempty open subsets, referred
to as the \emph{observation domains} of the followers. For $i=1,2$, let
\[
y_{i,d}^j\in L^2(\omega_{i,d}\times(0,T)),
\qquad
j=1,2,
\]
denote the desired trajectories associated with follower $i$.

For a fixed leader control
\[
h\in L^2(\omega\times(0,T)),
\]
we associate with each follower $i$ the cost functional
        \begin{equation} \label{energy}
            \begin{cases}
                J_1(h , v^1, v^2) := \displaystyle\frac{\alpha_1}{2}\int_{\omega_{1,d}\times (0,T)} (|y_1-y^1_{1,d}|^2+|y_2-y^1_{2,d}|^2)+\frac{\mu_1}{2}\int_{\omega_{1}\times (0,T)} \rho_*^2|v^1|^2, \\
                \\
                J_2(h , v^1, v^2) := \displaystyle\frac{\alpha_2}{2}\int_{\omega_{2,d}\times (0,T)} (|y_1-y^2_{1,d}|^2+|y_2-y^2_{2,d}|^2)+\frac{\mu_2}{2}\int_{\omega_{2}\times (0,T)} \rho_*^2|v^2|^2.  
            \end{cases}
        \end{equation}
        Here $\alpha_i,\mu_i>0$, $i=1,2$, are fixed constants, and $\rho_* \in C^{\infty}([0, T])$ is the weight function defined in \eqref{pesos} (it is a suitable positive function that blows up at $t=0$ and $t=T$). 

The first term in $J_i$ measures the discrepancy between the state and the
	trajectory desired by follower $i$, whereas the second term penalizes the
	size of the corresponding follower's control.    
\begin{definition} The control process can be described as follows.

\vglue 0.2 cm
 
 1. \textbf{Nash equilibrium.} The followers $v^1$ and $v^2$ assume that the leader $h$ has made a choice and intend to be in Nash equilibrium for the costs $J_i$, $i = 1, 2$. That is, once $h$ has been fixed, we look for a control $v^i\in L^2(\omega_i \times (0,T))$ that satisfies
 \begin{equation*}
     J_1(h, v^1, v^2) = \underset{\widehat{v}^1\in L^2(\omega_1 \times (0,T))}{min} \text{ }J_1(h,\widehat{v}^1, v^2), \qquad
    J_2(h, v^1, v^2) = \underset{\widehat{v}^2\in L^2(\omega_2 \times (0,T))}{min} \text{ }J_2(h, v^1, \widehat{v}^2).
  \end{equation*}

2. \textbf{Hierarchical null controllability with Nash equilibrium.} Given that the Nash equilibrium was reached for all fixed h, we looked for a specific control $h = \widehat{h} \in L^2(\omega\times (0,T))$ such that together with the Nash equilibrium $(v^1,v^2) = (v^1(h),v^2(h))$, the solution of system \eqref{Eq1} satisfies
\begin{equation}
    (y_1,y_2)(\cdot,T) = (0, 0), \ \text{ in } I.
\end{equation}
\end{definition}

\begin{definition}[\textbf{Nash quasi-equilibrium}]
Let the leader control $h$ be given. The pair $(v^1, v^2)$ is called a Nash quasi-equilibrium of the functionals $(J_1, J_2)$ if it satisfies
\begin{equation}
 \begin{cases} \label{equili}
     \dfrac{\partial J_1}{\partial v^1}(h, v^1, v^2)( \widehat{v}^1, 0) = 0, \text{ }\forall\widehat{v}^1 \in L^2(\omega_1\times (0,T)), \vspace{0.3cm} \\
     \dfrac{\partial J_2}{\partial v^2}(h, v^1, v^2)(0,\widehat{v}^2)= 0, \text{ }\forall\widehat{v}^2 \in L^2(\omega_2\times (0,T)).     
 \end{cases}
\end{equation}     
\end{definition}

\begin{remark}
If the functionals $J_1$ and $J_2$ defined by \eqref{energy} are convex
and if $(v^1, v^2)$ is a Nash quasi-equilibrium then it is a Nash equilibrium (therefore, in that case we have the equivalence of Nash quasi-equilibrium and Nash equilibrium). When a problem is nonlinear the functionals $J_1$ and $J_2$ are in general not convex.
\end{remark}

\subsection{Main result}
 The goal is to establish hierarchical null controllability for the system \eqref{Eq1}, and to do so some regularity conditions are needed. The following theorem guarantees us this controllability.
 
\begin{theorem}\label{teo:main_theorem_intro}
    Under the  assumptions \eqref{condiciones_a4}, \eqref{condiciones_a4f} and \eqref{condiciones_a4c(x,t)}, there exists $\varepsilon > 0$ such that, for any  $(f,g) \in [H_0^1(I)]^2$, satisfying 

    \begin{equation*}
        \Vert f \Vert_{H_0^1(I)}\leq \varepsilon,\quad \Vert g \Vert_{H_0^1(I)}\leq\varepsilon,\quad \Vert \tilde{\rho} y^{i}_{j,d} \Vert_{L^2(Q)}\leq\varepsilon,\quad i,j=1,2,
    \end{equation*}

    then, there exist a control $h \in L^2(\omega\times (0,T))$ and associated Nash equilibrium $(v^1,v^2)$ with $v^i \in L^2(0,T; L^2(\omega_i))$, $i=1, 2$, such that the corresponding solution to \eqref{Eq1} verifies
     \begin{equation*}
     	y_1(\cdot,T)=0,\quad y_2(\cdot,T)=0, \quad\text{in  }I.
     \end{equation*}
\end{theorem}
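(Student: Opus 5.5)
The plan follows the standard Stackelberg–Nash scheme: characterize the followers by an optimality system, prove null controllability of the linearized optimality system by a Carleman estimate, and pass to the quasi-linear problem with Lyusternik's inverse function theorem.

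\textbf{Step 1: reduction to an optimality system.} For fixed $h$, compute the Gâteaux derivatives in \eqref{equili}. A pair $(v^1,v^2)$ is a Nash quasi-equilibrium if and only if
\[
v^i=-\frac{1}{\mu_i}\rho_*^{-2}\phi^i_1\chi_{\omega_i},\qquad i=1,2.
\]
Here $(\phi^i_1,\phi^i_2)$ solves the backward adjoint system obtained by linearizing \eqref{Eq1} around $(y_1,y_2)$:
\[
-\phi^i_{1,t}-(a(y_1)\phi^i_{1,x})_x+a'(y_1)y_{1,x}\phi^i_{1,x}+F_1'(y_1)\phi^i_1+c\phi^i_2=\alpha_i(y_1-y^i_{1,d})\chi_{\omega_{i,d}},
\]
together with the analogous equation for $\phi^i_2$, with source $\alpha_i(y_2-y^i_{2,d})\chi_{\omega_{i,d}}$, and zero final data. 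I would then show that, for solutions in a small ball of a weighted space and for $\mu_i$ large enough, the second derivatives $D^2_{v^i}J_i$ are positive definite. The second derivative of the state with respect to $v^i$ is controlled by $\|y\|$ in $L^\infty(0,T;H^1_0)\cap L^2(0,T;H^2)$ and by $\|\hat v^i\|^2$. The weight $\rho_*$ absorbs the remaining terms, so each $J_i$ is convex near the target trajectory, and the quasi-equilibrium is a genuine Nash equilibrium. Theorem \ref{teo:main_theorem_intro} therefore reduces to the null controllability of the coupled forward–backward system in $(y_1,y_2,\phi^1_1,\phi^1_2,\phi^2_1,\phi^2_2)$ with the single control $h$.

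\textbf{Step 2: the linear problem.} Linearize the whole optimality system at zero. The state equations become $y_{j,t}-a(0)y_{j,xx}+F_j'(0)y_j+\dots$, and the adjoint equations carry the coupling $c$ and the sources $\alpha_i y_j\chi_{\omega_{i,d}}$. The adjoint of this linear system is again a forward–backward system, in variables $(\psi_1,\psi_2,\gamma^1_1,\dots)$. For it I would prove a Carleman inequality with observation only of $\psi_1$ on $\omega\times(0,T)$. Concretely:
\begin{itemize}
\item apply the classical Fursikov–Imanuvilov estimate to each component, with observation on a set $\omega_0\subset\omega$;
\item eliminate the local term of $\psi_2$ using the coupling $c>c_0$ on $\omega_d\cap\omega$, writing $\psi_2$ through the equation of $\psi_1$ and integrating by parts with a cut-off function;
\item eliminate the local terms of the $\gamma$'s through the sources, which requires $\omega_{i,d}\cap\omega\neq\varnothing$ (this is where $\omega_d$ enters);
\item absorb the lower-order and coupling terms by taking $\mu_i$ large and $s,\lambda$ large.
\end{itemize}
From the Carleman inequality I derive an observability inequality with weights $\rho,\tilde\rho,\rho_*$. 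A standard Lax–Milgram / penalization argument then gives a control $h$ and a state with $\rho y\in L^2(Q)$, so that $y(T)=0$. Parabolic regularity in weighted spaces upgrades this to $y\in L^2(0,T;H^2)\cap H^1(0,T;L^2)$, using the hypothesis $(f,g)\in H^1_0$.

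\textbf{Step 3: the inverse function theorem.} Define the Hilbert space $E$ of tuples $(y,\phi,h)$ with weighted norms, built from the space of Step 2. Define $\mathcal A:E\to G$ by
\[
\mathcal A(y,\phi,h)=(\text{state equations},\ \text{adjoint equations},\ y(0)),
\]
where $G$ is a weighted $L^2$ space times $[H^1_0]^2$. Then:
\begin{itemize}
\item show that $\mathcal A$ is $C^1$; the key estimates are that terms like $(a(y)y_x)_x-a(0)y_{xx}=a'(y)y_x^2+(a(y)-a(0))y_{xx}$ map $E$ into weighted $L^2$, using $H^1\subset L^\infty$ in one dimension, the bounds \eqref{condiciones_a4} on $a$, and the fact that the weights in $E$ are stronger than those in $G$;
\item note that $\mathcal A'(0)$ is onto by Step 2;
\item conclude with Lyusternik's theorem: small $(f,g)$ and small $\tilde\rho y^i_{j,d}$ are attained, and since membership in $E$ forces $y(T)=0$, the resulting $h$ is the desired leader control.
\end{itemize}

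The main obstacle I expect is twofold. First, the quasi-linear terms, especially $a'(y)y_x\phi_x$ in the adjoint equations, force a high-regularity weighted space for both $y$ and $\phi$, and the $C^1$ estimate must close with weight exponents compatible with the Carleman weights. Second, the convexity argument must use the same smallness. I would first try choosing weights $\rho_*\le\tilde\rho\le\rho$ as powers of the Carleman weight at $t$-extrema, as in \cite{EFC-Li2003, Cara-Limaco-21}.
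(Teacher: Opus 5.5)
Your architecture is the paper's: optimality system with convexity for $\mu_i$ large, a Carleman estimate for the adjoint of the linearized optimality system observing only the first component on $\omega$, Lax--Milgram plus weighted energy estimates (using $(f,g)\in[H^1_0(I)]^2$), and Lyusternik's theorem in weighted spaces whose weights force $y(T)=0$. The step that fails as written is the third bullet of Step~2. Applying the Fursikov--Imanuvilov estimate componentwise leaves a local observation term for each of the four $\gamma^i_j$. The only equations linking the $\gamma$'s to the observed component are those of $\psi_1$ and $\psi_2$. There they enter only through the sums $\alpha_1\gamma^1_j\chi_{\omega_{1,d}}+\alpha_2\gamma^2_j\chi_{\omega_{2,d}}$, $j=1,2$. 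Expressing these sums through the $\psi_j$-equations and integrating by parts against a cut-off therefore controls locally at most two combinations, never $\gamma^1_j$ and $\gamma^2_j$ separately. The hypothesis $\omega_{i,d}\cap\omega\neq\varnothing$ for each $i$ does not change this.

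The missing idea is the paper's reduction. Assume $\omega_{1,d}=\omega_{2,d}=\omega_d$; this is \eqref{ec9}, and it is built into the theorem through \eqref{condiciones_a4c(x,t)}. Set $\varrho_j=\alpha_1\gamma^1_j+\alpha_2\gamma^2_j$. Since $\gamma^1_j$ and $\gamma^2_j$ solve the same forward operator (with coupling $c\gamma^i_1$ when $j=2$), $(\psi_1,\psi_2,\varrho_1,\varrho_2)$ solves a closed four-equation system. The $\psi_j$-equation has source $\varrho_j\chi_{\omega_d}$, and the $\varrho_1$-equation has source $-\rho_*^{-2}\bigl(\frac{\alpha_1}{\mu_1}\chi_{\omega_1}+\frac{\alpha_2}{\mu_2}\chi_{\omega_2}\bigr)\psi_1$. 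Prove the Carleman estimate for this reduced system in two steps:
\begin{enumerate}
\item Remove the local terms of $\varrho_1,\varrho_2$ through the $\psi$-equations.
\item Remove the local term of $\psi_2$ through the $\psi_1$-equation, using $c>c_0$ on $\omega_d\cap\omega$. This leaves a global term $\int_Q\rho_*^{-8/5}|\varrho_1|^2$. An energy estimate for $\varrho_1$ bounds it by $C(\mu_1^{-2}+\mu_2^{-2})$ times the Carleman term of $\psi_1$ plus data; this is where the largeness of $\mu_i$ actually enters the Carleman step.
\end{enumerate}
Only afterwards recover each $\gamma^i_j$ separately, by energy estimates with the nonincreasing weight $\tilde\rho_*^{-2}$, using zero initial data and sources already controlled by the Carleman term of $\psi_1$. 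You need these individual bounds, because the functional in your Lax--Milgram step pairs each right-hand side $G_k$ with an individual adjoint component.
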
\textbf{}
Here, $\tilde\rho$ denotes the weight introduced in \eqref{pesos}.
In our result, the initial data are assumed to belong to $H_0^1(I)$ and to be sufficiently small in this space. This is essential for the proof presented below.
\begin{itemize}
\item The main difficulties in the proof arise from the fact that the nonlinear terms occur in the principal part of the differential operators.
\item To overcome these difficulties, we employ an argument based on the so-called Lyusternik inverse mapping theorem in Hilbert spaces.
\end{itemize}

\section{Preliminary results}

\subsection{\bf Well-posedness }

 We will first examine the well
posedness of the  system \eqref{Eq1} in suitable spaces.

The following result is a direct consequence of the well-posedness analysis developed in \cite{rincon2006nonlinear}.

\begin{theorem}\label{wellpo_nondege}
Assume that hypotheses \eqref{condiciones_a4}-\eqref{condiciones_a4c(x,t)} are satisfied. Then, for any $(f, g)\in [H^1_0(I)]^2$,    $h  \in L^2(\omega\times (0,T))$ and $v^i\in L^2(\omega_i \times (0,T))$, $i=1, 2$, satisfying $$ \Vert f \Vert_{H_0^1(I)}\leq \varepsilon_0,\quad \Vert g \Vert_{H_0^1(I)}\leq\varepsilon_0,$$ for some positive constant $\varepsilon_0$,  the system \eqref{Eq1} admits a unique weak solution $(y_1, y_2)$, such that
\begin{equation}\label{lmiwellpos}
\begin{split}
  &\sup_{t\in\left[0,T\right]} \left( |(y_1(\cdot, t),y_2(\cdot, t))|_{[L^2(I)]^2}+ \Vert (y_1(\cdot,t),y_2(\cdot,t))\Vert_{[H^1_0(I)]^2} \right) \\
 &+\Vert (y_{1,xx},y_{2,xx}\Vert_{[L^2(Q)]^2}  +\Vert (y_{1,t},y_{2,t}\Vert_{[L^2(Q)]^2} \\
&\leq C\left( \|h\|_{L^2(\omega\times (0,T))}  + \|v^1\|_{L^2(\omega_1\times (0,T))} + \Vert v^2 \Vert_{L^2(\omega_2\times (0,T))} +\Vert (f, g)\Vert_{[H^1_0(I)]^2}\right).   
\end{split}
\end{equation}
\end{theorem}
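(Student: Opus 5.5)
The statement to prove is Theorem \ref{wellpo_nondege}. The plan is the standard Faedo--Galerkin scheme combined with an a priori energy estimate at the $H^1_0$ level. Write $G:=h\chi_\omega+v^1\chi_{\omega_1}+v^2\chi_{\omega_2}\in L^2(Q)$.

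\textbf{Galerkin system.} We project \eqref{Eq1} onto $V_m=\mathrm{span}\{w_1,\dots,w_m\}$, where $w_k$ are the Dirichlet eigenfunctions of $-\partial_{xx}$ on $I$. Since $a$ and $F_i$ are locally Lipschitz, the resulting ODE system has a local solution $(y_1^m,y_2^m)$. A uniform bound then extends it to $[0,T]$.

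\textbf{Estimates for $y_1$.} Test the first equation with $y_1^m$. Using $a\ge M_0$, $|F_1(r)|\le M|r|$ (from $F_1(0)=0$) and Gronwall, we get
\[
\sup_t|y_1^m|_{L^2}^2+\|y_1^m\|_{L^2(0,T;H^1_0)}^2\le C\bigl(|f|^2+\|G\|^2_{L^2(Q)}\bigr).
\]
Next test with $-y^m_{1,xx}$ (admissible since $w_k$ are eigenfunctions), expanding $(a(y)y_x)_x=a(y)y_{xx}+a'(y)y_x^2$. This gives
\[
\frac12\frac{d}{dt}\|y^m_{1,x}\|^2+M_0\|y^m_{1,xx}\|^2\le M\int|y^m_{1,x}|^2|y^m_{1,xx}|+\int\bigl(|F_1(y^m_1)|+|G|\bigr)|y^m_{1,xx}|.
\]
The cubic term is the main obstacle. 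In one dimension, Gagliardo--Nirenberg gives $\|y_x\|_{L^4}^2\le C\|y_x\|\,\|y_{xx}\|$, so the term is bounded by $CM\|y_x\|\,\|y_{xx}\|^2$. With $\phi(t)=\|y^m_{1,x}(t)\|^2$ we then get
\[
\phi'+\bigl(M_0-C\phi^{1/2}\bigr)\|y^m_{1,xx}\|^2\le C\phi+C\|G\|^2.
\]

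\textbf{Smallness and continuity.} This is where the smallness assumption enters. If $\|f\|_{H^1_0}\le\varepsilon_0$ and the forcing is small, a continuity argument keeps $C\phi^{1/2}\le M_0/2$ on $[0,T]$. Gronwall then yields the $\sup_t H^1_0$ bound and $y_{xx}\in L^2(Q)$. The bound on $y_t$ follows from the equation, since
\[
\|a'(y)y_x^2\|_{L^2(Q)}\le C\sup_t\|y_x\|\,\|y_{xx}\|_{L^2(Q)}.
\]
If the forcing is not small, one has to control it separately. The estimate \eqref{lmiwellpos} is linear in the data, which reflects that it is used in the small regime. For arbitrary $G$ I would fall back on the global quasilinear theory cited from \cite{rincon2006nonlinear}, which uses the uniform bounds $M_0\le a\le M_1$ and $|a'|\le M$ to obtain global $H^1$ bounds via a Kirchhoff-type transform $A(y)=\int_0^y a$. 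This is a genuine gap in the plan: the transform yields $y_t-A(y)_{xx}+F_1(y)=G$, but testing that with $-y_{xx}$ does not remove the cubic $a'(y)y_x^2$ term. I would therefore state the theorem in the small-data regime, or take the global bound from \cite{rincon2006nonlinear}.

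\textbf{Second equation and passage to the limit.} The second equation is treated identically, with forcing $-cy_1^m\in L^2(Q)$ already bounded by the previous step since $c\in L^\infty$. To pass to the limit, the uniform bounds in $L^\infty(H^1_0)\cap L^2(H^2)\cap H^1(L^2)$ and Aubin--Lions give strong convergence in $L^2(0,T;H^1_0)$. Strong convergence handles the nonlinear terms $a(y)y_x$ and $F_i(y)$.

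\textbf{Uniqueness.} Take two solutions, set $w=y-\bar y$, and test the difference equation with $w$. The difference term
\[
\bigl((a(y)-a(\bar y))\bar y_x\bigr)_x
\]
is bounded using $|a'|\le M$, $\bar y_x\in L^2(L^\infty)$ (from $H^2$ regularity in 1D) and Gronwall. This gives $w=0$.
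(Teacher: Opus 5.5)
\textbf{Verdict: the argument has a genuine gap.} Your Galerkin setup, the Aubin--Lions passage to the limit and the Gronwall uniqueness argument are fine. But, as you suspected, the argument does not prove the statement.

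Your continuity argument closes only when $\|f\|_{H^1_0(I)}$ \emph{and} $\|h\chi_\omega+v^1\chi_{\omega_1}+v^2\chi_{\omega_2}\|_{L^2(Q)}$ are both small. The theorem assumes smallness of $(f,g)$ only and allows arbitrary $h,v^1,v^2$. The paper also uses the estimate with arbitrary $h$, e.g.\ in Remark \ref{import_remark_1}. The paper gives no argument beyond citing \cite{rincon2006nonlinear}, so ``falling back on the reference'' is just the paper's route, not a proof.

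The missing idea is the right multiplier in the Kirchhoff form, not smallness. You tested $y_t-A(y)_{xx}+F_1(y)=G$ with $-y_{xx}$, which reproduces $a'(y)y_x^2$. Test it instead with $A(y)_t=a(y)y_t$. Since $(a(y_1)y_{1,t})_x=(a(y_1)y_{1,x})_t$ and $y_{1,t}=0$ on the boundary, you get
\[
\frac12\frac{d}{dt}\int_I |a(y_1)y_{1,x}|^2\,dx+\int_I a(y_1)|y_{1,t}|^2\,dx=\int_I\bigl(G-F_1(y_1)\bigr)a(y_1)\,y_{1,t}\,dx,
\]
which has no cubic term. With $M_0\le a\le M_1$, $|F_1(r)|\le M|r|$ and your $L^2$ estimate, this gives
$\sup_t\|y_{1,x}\|_{L^2(I)}+\|y_{1,t}\|_{L^2(Q)}\le C(\|f\|_{H^1_0(I)}+\|G\|_{L^2(Q)})$ for arbitrary data.

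To make this rigorous at the Galerkin level, run the scheme on $u=A(y_1)\in H^1_0(I)$. It solves $u_t/a(A^{-1}(u))-u_{xx}+F_1(A^{-1}(u))=G$, and you can use $u^m_t\in V_m$ as test function. The mass matrix $(w_j/a(A^{-1}(u^m)),w_k)$ is positive definite, so the ODE system is solvable. The $y_2$ equation is handled the same way, with forcing $-cy_1$.

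Second-order regularity then needs no smallness either. First, $u_{xx}=y_{1,t}+F_1(y_1)-G\in L^2(Q)$. Then $y_{1,xx}=\bigl(u_{xx}-a'(y_1)y_{1,x}^2\bigr)/a(y_1)$, where
\[
\|y_{1,x}^2\|_{L^2(Q)}\le\sup_t\|y_{1,x}\|_{L^2(I)}\,\|y_{1,x}\|_{L^2(0,T;L^\infty(I))}.
\]
Moreover $\|y_{1,x}\|_{L^\infty(I)}\le M_0^{-1}\|u_x\|_{L^\infty(I)}\le M_0^{-1}\sqrt{L}\,\|u_{xx}\|_{L^2(I)}$, because $u_x$ has zero mean.

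This last bound is quadratic in the data, so your remark about linearity is correct for the $y_{xx}$ term. For large $h,v^i$, the linear estimate \eqref{lmiwellpos} should be read with $C$ depending on an upper bound for $\|h\|,\|v^1\|,\|v^2\|$. The paper's later constants, e.g.\ $\widehat C_1$ in the proof of Theorem \ref{teoconvex}, do depend on $h$. By contrast, existence, uniqueness and the $L^\infty(0,T;H^1_0)\cap H^1(0,T;L^2)$ bounds hold globally, with linear dependence on the data.
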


\subsection{\bf Characterization of the Nash quasi-equilibrium and convexity}

\subsubsection{Characterization of Nash quasi-equilibrium}

Initially, we will derive the optimality system, which will be obtained from the characterization of the Nash quasi-equilibrium.

\begin{proposition}\label{propo_cara_equi_nash}
    Let $(\widehat{v}^1, \widehat{v}^2) \in  L^2(0,T;L^2(\omega_1))\times L^2(0,T;L^2(\omega_2))$ be the Nash quasi-equilibrium pair for $(J_1, J_2)$, where $h \in L^2(\omega\times (0,T))$ and $\mu_i$, $i = 1, 2$, are some positive constants.  Then, there exist $p_j^i \in L^2(0,T;L^2(\omega_j))$,  $i,j = 1, 2$, such that the Nash quasi-equilibrium pair $(\widehat{v}^1, \widehat{v}^2)$  is characterized by
    \begin{equation} \label{carac1}
        \widehat{v}^i = -\frac{1}{\mu_i}\rho_*^{-2}p_1^i, \, \,  \text{  in  } \, \omega_i\times( 0,T),\,  i = 1, 2,
    \end{equation}
    where $(y_1, y_2, p_1^1, p_2^1,p_1^2, p_2^2)$ is the solution of the following optimality system
    	\begin{align} \label{to1}
    		\begin{cases}
    			y_{1,t}-(a(y_1)y_{1,x})_x + F_1(y_1)   = h\chi_\omega-\frac{1}{\mu_1}\rho_*^{-2}p_1^1\chi_{\omega_1}-\frac{1}{\mu_2}\rho_*^{-2}p_1^2\chi_{\omega_2}, &\text{ in } Q,\\  
    			y_{2,t}-(a(y_2)y_{2,x})_x + F_2(y_2)  +cy_1 = 0, &\text{ in } Q,\\  
    			-p_{1,t}^1-(a(y_1)p_{1,x}^1)_x + a'(y_1)y_{1,x}p_{1,x}^1+F_1'(y_1)p_1^1+cp_2^1=\alpha_1(y_1-y_{1,d}^1)\chi_{\omega_{1,d}}, &\text{ in } Q,\\  
    			-p_{1,t}^2-(a(y_1)p_{1,x}^2)_x + a'(y_1)y_{1,x}p_{1,x}^2+F_1'(y_1)p_1^2+cp_2^2=\alpha_2(y_1-y_{1,d}^2)\chi_{\omega_{2,d}}, &\text{ in } Q,\\  
    			-p_{2,t}^1-(a(y_2)p_{2,x}^1)_x + a'(y_2)y_{2,x}p_{2,x}^1+F_2'(y_2)p_2^1=\alpha_1(y_2-y_{2,d}^1)\chi_{\omega_{1,d}}, &\text{ in } Q,\\  
    			-p_{2,t}^2-(a(y_2)p_{2,x}^2)_x + a'(y_2)y_{2,x}p_{2,x}^2+F_2'(y_2)p_2^2=\alpha_2(y_2-y_{2,d}^2)\chi_{\omega_{2,d}}, &\text{ in } Q,\\  
    			y_1(0,t) = y_1(L,t) =0, y_2(0,t) = y_2(L,t) = 0, &\text{ on } (0,T),\\  
    			p_1^i(0,t) = p_1^i(L,t) =0, \, p_2^i(0,t) = p_2^i(L,t) = 0, i=1,2&\text{ on } (0,T),\\
    			y_1(x,0) = f(x), \,  y_2(x,0) = g(x), &\text{ in } I,\\
    			p_1^i(x,T) =0, p_2^i(x,T) =0, i=1,2, &\text{ in } I.\\
    		\end{cases}
    	\end{align}
\end{proposition}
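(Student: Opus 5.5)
The plan is the standard Lagrangian/adjoint computation of the Gâteaux derivative of $J_i$ with respect to $v^i$. Fix $h$ and a Nash quasi-equilibrium $(\widehat v^1,\widehat v^2)$, and let $(y_1,y_2)$ be the solution of \eqref{Eq1} given by Theorem \ref{wellpo_nondege}. For $\widehat v\in L^2(\omega_1\times(0,T))$ and $s\in\mathbb{R}$ small, let $(y_1^s,y_2^s)$ be the state associated with $(h,\widehat v^1+s\widehat v,\widehat v^2)$.

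\textbf{Step 1: the derivative of the state.} I first show that the directional derivative $(z_1,z_2)=\lim_{s\to0}(y_1^s-y_1,\,y_2^s-y_2)/s$ exists, and that it solves the linearized system
\begin{align*}
\begin{cases}
z_{1,t}-(a(y_1)z_{1,x}+a'(y_1)y_{1,x}z_1)_x+F_1'(y_1)z_1=\widehat v\chi_{\omega_1}, & \text{in } Q,\\
z_{2,t}-(a(y_2)z_{2,x}+a'(y_2)y_{2,x}z_2)_x+F_2'(y_2)z_2+cz_1=0, & \text{in } Q,\\
z_1=z_2=0 \text{ on } \{0,L\}\times(0,T), \qquad z_1(\cdot,0)=z_2(\cdot,0)=0, & \text{in } I.
\end{cases}
\end{align*}
To get this, I subtract the equations for $y^s$ and $y$, divide by $s$, and rewrite $a(y^s)y^s_x-a(y)y_x$ by the mean value theorem, using \eqref{condiciones_a4} and \eqref{condiciones_a4f}. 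The estimate \eqref{lmiwellpos} gives $y^s\to y$ in $L^\infty(0,T;H_0^1)\cap H^1(0,T;L^2)$. In one dimension this means $y_x\in L^\infty(0,T;L^2)\cap L^2(0,T;L^\infty)$, which controls the coefficient $a'(y)y_x$. A Gronwall energy estimate for the difference quotients then gives convergence to $z$.

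\textbf{Step 2: the derivative of $J_1$.} Differentiating $J_1$ gives
\[
\frac{\partial J_1}{\partial v^1}(\widehat v,0)=\alpha_1\!\int_{\omega_{1,d}\times(0,T)}\big((y_1-y^1_{1,d})z_1+(y_2-y^1_{2,d})z_2\big)+\mu_1\!\int_{\omega_1\times(0,T)}\rho_*^2\widehat v^1\widehat v.
\]

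\textbf{Step 3: the adjoint system.} I introduce $(p_1^1,p_2^1)$ as the solution of the backward linear system in \eqref{to1}. This is exactly the formal adjoint of the $z$-system, since integrating by parts gives
\[
-\int_Q (a'(y)y_xz)_x\,p=\int_Q a'(y)y_xp_xz .
\]
The coupling term $cz_1$ in the $z_2$ equation becomes $cp_2^1$ in the $p_1^1$ equation. Existence and uniqueness for this system hold because it is linear with coefficients $a(y_i)\ge M_0$ and $a'(y_i)y_{i,x},F_i'(y_i)$ in the class above. I solve the $p_2$ equation first, since it is decoupled, and then the $p_1$ equation, via Galerkin and energy estimates after reversing time.

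\textbf{Step 4: duality and conclusion.} I multiply the $z$-equations by $p_1^1,p_2^1$ and integrate over $Q$. The initial and final conditions and the boundary conditions remove all boundary terms, leaving
\[
\alpha_1\int_{\omega_{1,d}\times(0,T)}\big((y_1-y^1_{1,d})z_1+(y_2-y^1_{2,d})z_2\big)=\int_{\omega_1\times(0,T)}p_1^1\widehat v .
\]
Hence \eqref{equili} reads
\[
\int_{\omega_1\times(0,T)}\big(p_1^1+\mu_1\rho_*^2\widehat v^1\big)\widehat v=0 \quad\text{for all } \widehat v,
\]
which gives \eqref{carac1} for $i=1$. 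The case $i=2$ is identical with $(p_1^2,p_2^2)$. Substituting the $\widehat v^i$ back into \eqref{Eq1} yields \eqref{to1}.

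The main obstacle is Step 1 together with the justification of the integration by parts. Because the nonlinearity sits in the principal part, the linearization contains the first-order term $(a'(y)y_xz)_x$. Controlling it requires the strong regularity from Theorem \ref{wellpo_nondege}, which is why data in $H_0^1$ and small are needed. I would first handle this with one-dimensional Sobolev embeddings, bounding $\|y_x\|_{L^\infty}\lesssim\|y\|_{H^2}$, combined with Gronwall's lemma.
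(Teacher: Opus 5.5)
Your proposal is correct and follows essentially the same route as the paper: linearize the state to get $(z_1,z_2)$, introduce the backward adjoint system appearing in \eqref{to1}, multiply the $z$-equations by $(p_1^i,p_2^i)$ and integrate by parts to obtain the duality identity, and read off $\widehat{v}^i=-\frac{1}{\mu_i}\rho_*^{-2}p_1^i$ from the vanishing of $\partial J_i/\partial v^i$. Your Step 1 and your well-posedness remark for the adjoint system supply justifications the paper takes for granted; note only that \eqref{lmiwellpos} is a bound rather than a stability estimate, so the convergence $y^s\to y$ should come from your Gronwall estimate on the difference quotients (which gives $y^s-y=O(s)$), not from \eqref{lmiwellpos} alone.
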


\begin{proof}

Since $(\widehat{v}^1, \widehat{v}^2)$  is a Nash quasi-equilibrium, from \eqref{equili}, we have the following equality:
\begin{equation}\label{der. func1}
\alpha_i\int_{\omega_{1,d}\times (0,T)} \left[ z^i_1(y_1-y^i_{1,d})+ z^i_2(y_2-y^i_{2,d}) \right]+\mu_i\int_{\omega_{1}\times (0,T)} \rho_*^2 v^i\widehat{v}^i=0,
\end{equation}for all ${v}^i \in  L^2(0,T;L^2(\omega_i))$, $i=1, 2$, where $(z_1^i,z_2^i)$ satisfies the system
\begin{align} \label{sist. der. 1}
		\begin{cases}		
			z^i_{1,t}-(a(y_1)z^i_{1,x})_x -(a'(y_1)y_{1,x}z^i_1)_x+F_1'(y_1)z^i_1   = \widehat{v}^i\chi_{\omega_1},&\text{ in } Q,\\  
			z^i_{2,t}-(a(y_2)z^i_{2,x})_x -(a'(y_2)y_{2,x}z^i_2)_x+F_2'(y_2)z^i_2  +cz^i_1 = 0, &\text{ in } Q,\\  
			z^i_1(0,t) = z^i_1(L,t) =0, z^1_2(0,t) = z^i_2(L,t) = 0, &\text{ on } (0,T),\\
			z^i_1(x,0) = 0, z^i_2(x,0) = 0, &\text{ in } I.
		\end{cases}
	\end{align}On the other hand, for $i=1, 2$, let $(p^i_1, p^i_2)$ be the  adjoint system  to \eqref{sist. der. 1}. Thus, $(p^i_1, p^i_2)$ is solution of the system
    \begin{align}\label{adj.der1}
		\begin{cases}		
			-p^i_{1,t}-(a(y_1)p^i_{1,x})_x +a'(y_1)y_{1,x}p^i_{1,x}+F_1'(y_1)p^i_1 +cp_i^1  =  \alpha_1(y_1-y^i_{1,d})\chi_{\omega_{i,d}},  &\text{ in } Q,\\  
			-p^i_{2,t}-(a(y_2)p^i_{2,x})_x -a'(y_2)y_{2,x}p^i_{2,x}+F_2'(y_2)p^i_2   = \alpha_1(y_2-y^i_{2,d})\chi_{\omega_{i,d}}, &\text{ in } Q,\\  
			p^i_1(0,t) = p^i_1(L,t) =0, p^i_2(0,t) = p^i_2(L,t) = 0, &\text{ on } (0,T),\\
			p^i_1(x,T) = 0, p^i_2(x,T) = 0, &\text{ in } I.
		\end{cases}
	\end{align}To derive the identity \eqref{carac1}, we multiply $\eqref{adj.der1}_1$ and $\eqref{adj.der1}_2$ by $z^i_1$ and $z^i_2$, respectively, and integrate by parts on $Q$, to obtain
\begin{equation}\label{mulzandadjunp}
\alpha_i\int_{\omega_{i,d}\times (0,T)} \left[z^i_1(y_1-y^i_{1,d})+z^i_2(y_2-y^i_{2,d})\right]=\int_{\omega_{i}\times (0,T)} v^i  p_1^i,
\end{equation}for all ${v}^i \in  L^2(0,T;L^2(\omega_i))$, $i=1, 2$. Finally, from \eqref{der. func1} and \eqref{mulzandadjunp}, we deduce \eqref{carac1} and  the proof is complete .
\end{proof}

\begin{remark}\label{import_remark_1} Using the estimate \eqref{lmiwellpos}    and performing the same process as in  \cite{Nunez-Li2023}, we have that the system  \eqref{to1} admits a unique weak solution $(y_1, y_2, p_1^1, p_2^1,p_1^2, p_2^2)$, such that 
\begin{equation}\label{lmiwellpos_optimalsiste}
\begin{split}
&\sup_{t\in\left[0,T\right]} \left( |(y_1(\cdot, t),y_2(\cdot, t))|_{[L^2(I)]^2}+ \Vert (y_1(\cdot,t),y_2(\cdot,t))\Vert_{[H^1_0(I)]^2} \right) \\
 & +\sum^2_{i=1}\left[\sup_{t\in\left[0,T\right]} \left( |(p^{i}_1(\cdot, t),p^{i}_2(\cdot, t))|_{[L^2(I)]^2}+ \Vert (p^{i}_1(\cdot,t),p^{i}_2(\cdot,t))\Vert_{[H^1_0(I)]^2} \right)\right] \\
&\leq C\Big( \|h\|_{L^2(\omega \times (0,T))} + \Vert (f, g)\Vert_{[H^1_0(I)]^2} \\
&+ \sum_{k=1}^2 \|y_{k,d}^1\|_{L^2(\omega_{1,d}\times (0,T))} + \sum_{k=1}^2 \|y_{k,d}^2\|_{L^2(\omega_{2,d}\times (0,T))} \Big).
\end{split}
\end{equation}
\end{remark}

\subsubsection{Convexity of the functionals: Nash equilibrium}

Now, we will study the convexity of functionals defined in \eqref{energy}. To achieve this, we prove the result for the system in a general form:
\begin{equation}\label{eqprin544}
	\begin{cases}
		y_{1,t}-b(t)\left(a_1(y_{1})y_{1,x}\right)_x-B_1(x,t) y_{1,x}+F_1(y_1,y_2) \\
        ={h}\chi_{\omega}+{v}^1\chi_{\omega_1}+{v}^2\chi_{\omega_2}, & \ \ \ \text{in} \ \ \ {Q}, \\
		y_{2,t}-b(t)\left(a_2(y_{2})y_{2,x}\right)_x-B_2(x,t) y_{2,x}+F_2(y_1,y_2)=0, & \ \ \ \text{in} \ \ \ {Q},\\
		y_1(0,t)=y_1(L,t)=y_2(0,t)=y_2(L,t)=0, & \ \ \ \text{on} \ \ \ (0,T),\\
		y_1(0)=y_1^0, \ y_2(0)=y_2^0, & \ \ \ \text{in} \ \ \ I,
	\end{cases}
\end{equation}	
where the coefficients $b(t)$, $B_i(x,t)$, $i=1,2$, are bounded, and $0 < b_0 < b(t)$, for some constant $b_0$. The functions $F_i$, $i=1,2$ are Lipschitz.

We have the following result.

\begin{theorem}\label{teoconvex}
	Assume that \eqref{condiciones_a4}, \eqref{condiciones_a4f} hold and that $y_1^0, y_2^0\in H^1_0(I)$ and $y^i_{1,d} \in L^2(\omega_{1,d} \times (0,T))$, $y^i_{2,d} \in L^2(\omega_{2,d} \times (0,T))$. Let us suppose that $h\in L^2(\omega \times (0,T))$ and $\mu_i, i=1,2$ are sufficiently large. Then, if $(v^1,v^2)$ is a Nash quasi-equilibrium for $J_i, i=1,2$, there exists a constant $C>0$ independent of $\mu_i$, $i=1,2$, such that
	\begin{equation}\label{ecsegundaderivada}
		D_i^2 J_i (h; v^1,v^2)\cdot (\widetilde{v}^i,\widetilde{v}^i)\geq C\|\widetilde{v}^i\|^2_{L^2(\omega_i\times (0,T))},\ \ \ \ \ \
		\forall  \ \widetilde{v}^i \in L^2(\omega_i\times (0,T)), \ \ \ i=1,2.
	\end{equation}
\end{theorem}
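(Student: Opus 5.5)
We compute the second derivative of $J_i$ with respect to $v^i$ explicitly and show that it is a perturbation of the coercive term $\mu_i\int\rho_*^2|\widetilde v^i|^2$ by terms that can be absorbed when $\mu_i$ is large. Fix $i$ (say $i=1$), and fix $h$ and the Nash quasi-equilibrium $(v^1,v^2)$ with state $(y_1,y_2)$. For a direction $\widetilde v^1$ and a second direction $w$, consider $J_1(h,v^1+s\widetilde v^1+\tau w,v^2)$. Denote the first-order variation by $(z_1,z_2)$. It solves the linearized system
\[
z_{1,t}-b\left(a_1(y_1)z_{1,x}+a_1'(y_1)y_{1,x}z_1\right)_x-B_1z_{1,x}+\partial_1F_1z_1+\partial_2F_1z_2=\widetilde v^1\chi_{\omega_1},
\]
together with the analogous equation for $z_2$ and zero initial data. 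Denote the second-order variation in the directions $(\widetilde v^1,\widetilde v^1)$ by $(\eta_1,\eta_2)$. It solves the same linear operator with zero initial data, but with source terms quadratic in $z$:
\[
b\left(a_1''(y_1)y_{1,x}z_1^2+2a_1'(y_1)z_1z_{1,x}\right)_x-D^2F_1(y)(z,z),
\]
and similarly in the second equation. Then
\[
D_1^2J_1\cdot(\widetilde v^1,\widetilde v^1)=\alpha_1\int_{\omega_{1,d}\times(0,T)}\left(|z_1|^2+|z_2|^2\right)+\alpha_1\int_{\omega_{1,d}\times(0,T)}\left[(y_1-y_{1,d}^1)\eta_1+(y_2-y_{2,d}^1)\eta_2\right]+\mu_1\int\rho_*^2|\widetilde v^1|^2 .
\]
The first term is nonnegative and can be dropped. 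Since $\rho_*\ge c>0$, the last term is at least $c\,\mu_1\|\widetilde v^1\|^2$. It therefore suffices to show that the middle term is bounded by $C\|\widetilde v^1\|^2$ with $C$ independent of $\mu_1$.

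To handle the middle term we use duality. Let $(p_1,p_2)$ be the adjoint state of Proposition \ref{propo_cara_equi_nash}, associated with $\alpha_1(y_k-y^1_{k,d})\chi_{\omega_{1,d}}$. Multiplying the $\eta$-system by $(p_1,p_2)$ and integrating by parts (the data vanish at $t=0$ and $p$ vanishes at $t=T$) converts the middle term into
\[
-\int_Q b\left(a_1''(y_1)y_{1,x}z_1^2+2a_1'(y_1)z_1z_{1,x}\right)p_{1,x}-\int_Q D^2F_1(y)(z,z)\,p_1+(\text{same for }k=2).
\]
We now bound each factor. By Remark \ref{import_remark_1} (and Theorem \ref{wellpo_nondege} adapted to \eqref{eqprin544}), $y$ is bounded in $L^\infty(0,T;H^1_0)\cap H^{2,1}$ and $p$ in $L^\infty(0,T;H^1_0)$, and with more care $p\in H^{2,1}$, so $p_x\in L^2(0,T;L^\infty)$. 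Here we use that the data $h$, $(y^0_1,y^0_2)$ and $y_d$ are fixed, and we must check that these bounds do not deteriorate with $\mu_i$. This holds because the equilibrium controls satisfy $\mu_i\|\rho_* v^i\|^2\lesssim$ data, which follows by testing the characterization \eqref{carac1}. For the linear $z$-system, energy estimates in one space dimension give
\[
\|z\|_{L^\infty(0,T;H^1_0)}+\|z\|_{H^{2,1}}\le C\|\widetilde v^1\|_{L^2},
\]
where $C$ depends only on the bounds on $y$ and not on $\mu$. By the Sobolev embedding $H^1_0(I)\hookrightarrow L^\infty(I)$, each integral is then controlled, for instance
\[
\left|\int a_1'z_1z_{1,x}p_{1,x}\right|\le M\|z_1\|_{L^\infty(Q)}\|z_{1,x}\|_{L^2(Q)}\|p_{1,x}\|_{L^2(0,T;L^\infty)}\le C\|\widetilde v^1\|^2 .
\]
The terms with $y_{1,x}z_1^2p_{1,x}$ and $D^2F\,z^2p$ are handled the same way. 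Hence
\[
D_1^2J_1\cdot(\widetilde v^1,\widetilde v^1)\ge(c\,\mu_1-C)\|\widetilde v^1\|^2,
\]
and choosing $\mu_1$ large gives \eqref{ecsegundaderivada}. The case $i=2$ is symmetric.

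The main obstacle is establishing uniform regularity bounds. One needs $p_x\in L^2(0,T;L^\infty)$ and $y_x$ bounded in the norms used above, with constants independent of $\mu_i$. These come from the quasi-linear well-posedness theory and require the $H^1_0$ initial data together with the $C^3$ assumption on $a$. One also has to make sure that the dependence of $y$ on $v$, which passes through the coupled optimality system, does not reintroduce a factor of $\mu$. A further point is justifying the twice-differentiability of $v\mapsto y$ in the quasi-linear setting. I would handle this with the implicit function theorem in $H^{2,1}$.
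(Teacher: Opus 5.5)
Your proposal is correct and is essentially the paper's argument. The only difference is the order of the duality: the paper differentiates the adjoint state (its $\eta^1$) and pairs it with the first-order sensitivity $\theta$, whereas you pair a second-order state sensitivity with $p$. Both routes give the same representation of $D_1^2J_1(\widetilde v^1,\widetilde v^1)$ as $\mu_1\int\rho_*^2|\widetilde v^1|^2+\alpha_1\int_{\omega_{1,d}\times(0,T)}|z|^2$ plus a trilinear term in $(z,z,p)$. That term is bounded by energy estimates and $H^1_0(I)\hookrightarrow L^\infty(I)$ and absorbed for $\mu_1$ large. The $\mu_i$-uniform bounds on $y$ and $p$, which you flag as the delicate point, are simply asserted in the paper via \eqref{eq:energy_p_ik}. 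Your sketch is therefore at the same level of rigor there, though testing \eqref{carac1} alone does not close that bound without a bootstrap on $y$.
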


\begin{proof}
  The proof is based on the approach used in \cite[Proposition 1.4,]{Ara-EFC-San2015}. Let $(v^1,v^2)$ be a Nash quasi-equilibrium for $J_i$, $i=1,2$. 

   Given $\lambda\in \mathbb{R}$ and $\widetilde{v}^1, \overline{v}^1$ in $L^2(\omega_1\times (0,T))$, let us consider $(y_1^\lambda, y_2^\lambda)$  the solution of the following system:
\begin{equation}\label{ylambda}
        \begin{cases}
    		y^\lambda_{1,t}-b(t)\left(a_1(y^\lambda_{1})y^\lambda_{1,x}\right)_x-B_1(x,t) y^\lambda_{1,x}+F_1(y^\lambda_1,y^\lambda_2) \\
            ={h}\chi_{\omega}+(v^1+\lambda\widetilde{v}^1)\chi_{\omega_1}+{v}^2\chi_{\omega_2}, & \ \ \ \text{in} \ \ \ {Q},\\
    		y^\lambda_{2,t}-b(t)\left(a_2(y^\lambda_{2})y^\lambda_{2,x}\right)_x-B_2(x,t) y^\lambda_{2,x}+F_2(y^\lambda_1,y^\lambda_2)=0,& \ \ \ \text{in} \ \ \ {Q},\\
    	   y^\lambda_1(0,t)=y^\lambda_2(L,t)=y^\lambda_2(0,t)=y^\lambda_2(L,t)=0, & \ \ \ \text{on} \ \ \ (0,T),\\
    		y^\lambda_1(0)=y_0^1, \ y^\lambda_2(0)=y_2^0, & \ \ \ \text{in} \ \ \ I.
    	\end{cases}
    \end{equation} and we denote $y_i:=\left. y_i^\lambda\right|_{\lambda=0}$, $i=1, 2$.  Thus, we deduce that
    \begin{equation}\label{ecD2defi}
        \begin{split}
            & \langle D_1 J_1(h,v^1+\lambda\widetilde{v}^1,v^2 ),\overline{v}^1 \rangle-  \langle D_1 J_1(h,v^1,v^2 ),\overline{v}^1 \rangle\\
            &= \lambda \mu_1 \int_{0}^{T}\int_{\mathcal{O}_{1}} \rho_*^2 \overline{v}^1\widetilde{v}^1dxdt
    		+\alpha_1\int_{0}^{T}\int_{\mathcal{O}_{1,d}}\left( (y_1^\lambda-y^1_{1,d})q^{1,\lambda}_1+(y_2^\lambda-y^1_{2,d})q^{1,\lambda}_2 \right)\ dx dt \\
    		&-\alpha_1\int_{0}^{T}\int_{\mathcal{O}_{1,d}}\left( (y_1-y^1_{1,d})q^{1}_1+(y_2-y^1_{2,d})q^{1}_2 \right)\ dx dt,
        \end{split}
    \end{equation}where $q^{1,\lambda}_1, q^{1,\lambda}_2$ are the derivative of the state $y^\lambda_1$ and $y^\lambda_2$ with respect to $v^1+\lambda\widetilde{v}^1$ in the direction $\widetilde{v}^1$. That is, $(q^{1,\lambda}_1, q^{1,\lambda}_2)$ is solution of system   

\begin{equation}\label{qlambda}
    	\begin{cases}
    		q^{1,\lambda}_{1,t}-b(t)\left(a_1'(y^\lambda_{1}) q^{1,\lambda}_{1} y^\lambda_{1,x} + a_1(y^\lambda_{1,x})q^{1,\lambda}_{1,x}\right)_x-B_1(x,t) q^{1,\lambda}_{1,x}\\
             +D_1 F_1(y_1^\lambda,y_2^\lambda)q^{1,\lambda}_1+ D_2 F_1(y_1^\lambda,y_2^\lambda)q^{1,\lambda}_2=\widetilde{v}^1\chi_{\omega_1}, & \ \ \text{in} \ \ {Q}, \\
    		q^{1,\lambda}_{2,t}-b(t)\left( a_2'(y^\lambda_{2}) q^{1,\lambda}_{2} y^\lambda_{2,x} + a_2(y^\lambda_{2})q^{1,\lambda}_{2,x}\right)_x-B_2(x,t) q^{1,\lambda}_{2,x}\\
            +D_1 F_2(y_1^\lambda,y_2^\lambda) q^{1,\lambda}_1+ D_2 F_2(y_1^\lambda,y_2^\lambda)q^{1,\lambda}_2=0, &\ \ \text{in} \ \ {Q},\\
    		q^{1,\lambda}_1(0,t)=q^{1,\lambda}_1(L,t)=q^{1,\lambda}_2(0,t)=q^{1,\lambda}_2(L,t)=0, & \ \ \text{on} \ \ (0,T),\\
    		q^{1,\lambda}_1(0)=0, \ q^{1,\lambda}_2(0)=0, & \ \ \text{in} \ \ I,
    	\end{cases}
    \end{equation}with  notation  $q_i:=\left. q_i^\lambda\right|_{\lambda=0}$, $i=1, 2$.

    Moreover, let us consider the adjoint system of \eqref{qlambda}, given by 
    \begin{equation}\label{qlambdaadjunto}
    	\begin{cases}
    		-p^{1,\lambda}_{1t}-b(t)\left(a_1'(y^\lambda_{1}) p^{1,\lambda}_{1} y^\lambda_{1,x} + a_1(y^\lambda_{1})p^{1,\lambda}_{1,x}\right)_x+\left(B_1(x,t)p^{1,\lambda}_1\right)_x \\
            + D_1 F_1(y_1^\lambda,y_2^\lambda)p^{1,\lambda}_1 + D_1 F_2(y_1^\lambda,y_2^\lambda)p^{1,\lambda}_2 =\alpha_1\left( y^\lambda_1-y_{1,d}^1  \right) \chi_{\omega_{1,d}}, &\ \text{in} \ {Q}, \\
    		-p^{1,\lambda}_{2,t}-b(t)\left(a_2'(y^\lambda_{2})p^{1,\lambda}_{2}y^\lambda_{2,x} + a_2(y^\lambda_{2})p^{1,\lambda}_{2,x}\right)_x+\left(B_2(x,t)p^{1,\lambda}_2\right)_x \\
            + D_2 F_1(y_1^\lambda,y_2^\lambda)p^{1,\lambda}_1 + D_2 F_2(y_1^\lambda,y_2^\lambda)p^{1,\lambda}_2 =\alpha_1\left( y^\lambda_2-y_{2,d}^1  \right)\chi_{\omega_{2,d}}, &\ \text{in} \ {Q},\\
    		p^{1,\lambda}_1(0,t)=p^{1,\lambda}_1(L,t)=p^{1,\lambda}_2(0,t)=p^{1,\lambda}_2(L,t)=0, & \ \text{on} \  {(0,T)},\\
    		p^{1,\lambda}_1(T)=0, \quad p^{1,\lambda}_2(T)=0, & \ \text{in} \ I,
    	\end{cases}
    \end{equation}Multiplying the first and the second equation of \eqref{qlambda} by $p^{1,\lambda}_{1}$ and $p^{1,\lambda}_{2}$ , respectively, integrating by parts over $Q$ and using \eqref{qlambdaadjunto}, we have that
\begin{equation}\label{inter1}
		\alpha_1\int_{0}^{T}\int_{\omega_{1,d}}\left( (y_1^\lambda-y^1_{1,d})q^{1,\lambda}_1+(y_2^\lambda-y^1_{2,d})q^{1,\lambda}_2 \right)\ dx dt=\int_{Q} \widetilde{v}^1\chi_{\omega_1}p^{1,\lambda}_{1} \ dxdt.
	\end{equation}Similarly, for $\lambda=0$, we obtain
	\begin{equation}\label{inter2}
		\alpha_1\int_{0}^{T}\int_{\omega_{1,d}}\left( (y_1-y^1_{1,d})q^{1}_1+(y_2-y^1_{2,d})q^{1}_2 \right)\ dx dt=\int_{Q} \widetilde{v}^1\chi_{\omega_1}p^{1}_{1} \ dxdt.	\end{equation}Therefore, from \eqref{ecD2defi}, \eqref{inter1} and \eqref{inter2}, we obtain the following identity:
        \begin{equation}\label{ecD2defisimpli}
        \begin{split}
          &\langle D_1 J_1(h,v^1+\lambda\widetilde{v}^1,v^2 ),\overline{v}^1 \rangle-  \langle D_1 J_1(h,v^1,v^2 ),\overline{v}^1 \rangle \\
          &=\lambda \mu_1 \int_{0}^{T}\int_{\omega_1} \rho_*^2 \overline{v}^1\widetilde{v}^1 \ dxdt
 	      +\int_{0}^{T}\int_{\omega_{1}}\left( p^{1,\lambda}_1-p^{1}_1 \right)\widetilde{v}^1 \ dx dt.   
        \end{split}
    \end{equation}On the other hand, from \eqref{ylambda} and \eqref{qlambdaadjunto}, we deduce that
    \begin{equation}\label{auxaux_1}
    \begin{cases}
		(y_1^\lambda-y_1)_{t}-b(t)\left((a_1(y^\lambda_{1})-a_1(y_{1}))y^\lambda_{1,x} + a_1(y_{1})(y_1^\lambda-y_1)_{x}\right)_x \\
        \qquad -B_1(x,t) (y_1^\lambda-y_1)_{x}+F_1(y^\lambda_1,y^\lambda_2)-F_1(y_1,y_2)=\lambda\widetilde{v}^1\chi_{\omega_1}, &\\
		(y_2^\lambda-y_2)_{t}-b(t)\left((a_2(y^\lambda_{2})-a_1(y_{2}))y^\lambda_{2,x} + a_2(y_{2})(y_2^\lambda-y_2)_{x}\right)_x \\
        \qquad -B_2(x,t) (y_2^\lambda-y_2)_{x}
        +F_2(y^\lambda_1,y^\lambda_2)-F_2(y_1,y_2)=0,&
	\end{cases}
    \end{equation}and
    \begin{equation}\label{auxaux_2}
      \begin{cases}
			-(p^{1,\lambda}_1-p^{1}_1)_{t}  -b(t)\left[ (a_1(y_1^\lambda)-a_1(y_1))p^{1,\lambda}_{1,x} + a_1(y_1) (p^{1,\lambda}_{1,x}-p^1_{1,x}) \right]_x \\
            -b(t)\left[(a_1'(y_1^\lambda)-a_1'(y_1))p^{1,\lambda}_1 y^\lambda_{1,x} + a_1'(y_1)(p^{1,\lambda}_1-p^1_1)y^\lambda_{1,x} + a_1'(y_1)p^1_1 (y^\lambda_{1,x}-y_{1,x})\right]_x\\
			+[D_1 F_1(y_1^\lambda,y_2^\lambda)-D_1 F_1(y_1,y_2)]p^{1,\lambda}_1 + D_1 F_1(y_1,y_2)(p^{1,\lambda}_1-p^{1}_1)\\
            +[D_1 F_2(y_1^\lambda,y_2^\lambda)-D_1 F_2(y_1,y_2)]p^{1,\lambda}_2 + D_1 F_2(y_1,y_2)(p^{1,\lambda}_2-p^{1}_2) \\
            +\left(B_1(x,t)(p^{1,\lambda}_1-p^{1}_1)\right)_x=\alpha_1\left( y^\lambda_1-y_1  \right)\chi_{\omega_{1,d}},  \\
			-(p^{1,\lambda}_2-p^{1}_2)_{t}   -b(t)\left[ (a_2(y_2^\lambda)-a_2(y_2))p^{1,\lambda}_{2,x} + a_2(y_2) (p^{1,\lambda}_{2,x}-p^1_{2,x}) \right]_x \\
            -b(t)\left[ (a_2'(y_2^\lambda)-a_2'(y_2))p^{1,\lambda}_2 y^\lambda_{2,x} + a_2'(y_2)(p^{1,\lambda}_2-p^1_2)y^\lambda_{2,x} + a_2'(y_2)p^1_2 (y^\lambda_{2,x}-y_{2,x}) \right]_x\\
            +[D_2 F_1(y_1^\lambda,y_2^\lambda)-D_2 F_1(y_1,y_2)]p^{1,\lambda}_1 + D_2 F_1(y_1,y_2)(p^{1,\lambda}_1-p^{1}_1)\\
            +[D_2 F_2(y_1^\lambda,y_2^\lambda)-D_2 F_2(y_1,y_2)]p^{1,\lambda}_2 + D_2 F_2(y_1,y_2)(p^{1,\lambda}_2-p^{1}_2)\\
			+\left(B_2(x,t) (p^{1,\lambda}_2-p^{1}_2)\right)_x=\alpha_1\left( y^\lambda_2-y_2  \right)\chi_{\omega_{2d}},
		\end{cases}  
\end{equation}respectively. 

\noindent Thus, if we set 
\begin{equation*}
	 \theta_i:=\lim_{\lambda\to 0} \frac{1}{\lambda}(y^\lambda_i - y_i), \, \, \text{ and \, \, }	\eta^1_i:=\lim_{\lambda\to 0} \frac{1}{\lambda}(p_i^{1,\lambda} - p_i^1),  \ \ i=1,2, 
	\end{equation*}from \eqref{auxaux_1}, \eqref{auxaux_2} and using \eqref{condiciones_a4}, it follows that 
\begin{equation}\label{sistemaconvexo1}
	\begin{cases}
		\theta_{1,t}-b(t)\left[a_1'(y_{1})\theta_1 y_{1,x}+a_1(y_{1})\theta_{1,x}\right]_x-B_1(x,t)\theta_{1,x}\\
        +D_1 F_1(y_1,y_2)\theta_1 + D_2 F_1(y_1,y_2)\theta_2 = \widetilde{v}^1\chi_{\omega_1}, & \ \ \ \text{in} \ \ \ {Q},\\
		\theta_{2,t}-b(t)\left[a_2'(y_{2})\theta_2 y_{2,x}+a_2(y_{2})\theta_{2,x}\right]_x-B_2(x,t)\theta_{2,x}\\
        +D_1 F_2(y_1,y_2)\theta_1 + 
        D_2 F_2(y_1,y_2)\theta_2=0,& \ \ \ \text{in} \ \ \ {Q},\\
		\theta_1(0,t)=\theta_1(L,t)=\theta_2(0,t)=\theta_2(L,t)=0, & \ \ \ \text{on} \ \ \ (0,T),\\
		\theta_1(0)=0, \ \theta_2(0)=0, & \ \ \ \text{in} \ \ \ I,
	\end{cases}
\end{equation} and
    \begin{equation}\label{sistemaconvexo2}
		\begin{cases}
			-\eta^1_{1,t} 
            -b(t) \left[ a_1'(y_1)\eta^1_1 y_{1,x} + a_1(y_1) \eta^1_{1,x}\right]_x \\
             + \left(B_1(x,t) \eta^1_1\right)_x
            + D_1 F_1(y_1,y_2)\eta_1^1 + D_1 F_2(y_1,y_2)\eta_2^1 +\mathcal{N}_1^1(\eta,\theta) \\
             =\alpha_1\theta_1\chi_{\omega_{1,d}}, &\ \text{in} \ {Q},\\
			-\eta^1_{2,t} 
            - b(t) \left[ a_2'(y_2)\eta^1_2 y_{2,x} + a_2(y_2) \eta^1_{2,x}\right]_x \\
            + \left(B_2(x,t) \eta^1_2\right)_x
            + D_2 F_1(y_1,y_2)\eta_1^1 + D_2 F_2(y_1,y_2)\eta_2^1 + \mathcal{N}_2^1(\eta,\theta) \\
            =\alpha_1\theta_2\chi_{\omega_{1,d}}, &\ \text{in} \ {Q},\\
			\eta^{1}_1(0,t)=\eta^{1}_1(L,t)=\eta^{1}_2(0,t)=\eta^{1}_2(L,t)=0, & \ \text{on} \  {(0,T)},\\
			\eta^{1}_1(T)=0, \ \eta^{1}_2(T)=0, & \ \text{in} \ I,
		\end{cases}
	\end{equation}respectively,  where
    \begin{equation}\label{N_1N_2masnafren}
        \begin{split}
			\mathcal{N}_1^1(\eta,\theta) &=[D_{11}^2 F_1(y_1,y_2)\theta_1+D_{12}^2 F_1(y_1,y_2)\theta_2]p_1^1
			+[D_{11}^2 F_2(y_1,y_2)\theta_1+D_{12}^2 F_2(y_1,y_2)\theta_2]p_2^1 \\
            &\qquad -b(t) \left[ a_1''(y_{1})\theta_1 p^1_1 y_{1,x} + a_1'(y_{1})p^1_1 \theta_{1,x} + a_1'(y_1) \theta_1 p^1_{1,x} \right]_x, \\
            \mathcal{N}_2^1(\eta,\theta) &= [D_{21}^2 F_1(y_1,y_2)\theta_1+D_{22}^2 F_1(y_1,y_2)\theta_2]p_1^1
            +[D_{21}^2 F_2(y_1,y_2)\theta_1+D_{22}^2 F_2(y_1,y_2)\theta_2]p_2^1 \\
            &\qquad - b(t) \left[ a_2''(y_{2})\theta_2 p^1_2 y_{2,x} + a_2'(y_{2})p^1_2 \theta_{2,x} + a_2'(y_2) \theta_2 p^1_{2,x} \right]_x.
        \end{split}
	\end{equation}Then, from \eqref{sistemaconvexo1}, \eqref{sistemaconvexo2} and considering $\Bar{v}^{1}=\Tilde{v}^{1}$in \eqref{ecD2defisimpli}, we have that
    \begin{equation}
		\label{D2Jigual}
		\langle D_1^2 J_1(h,v^1,v^2), (\bar{v}^{1}, \bar{v}^{1}) \rangle = \int_{0}^{T}\int_{\omega_1} \eta_1^1 \bar{v}^{1} \ dxdt + \mu_1 \int_{0}^{T}\int_{\omega_1} \rho_*^2 |\bar{v}^{1}|^{2} \ dxdt. 
	\end{equation}

\textbf{Claim:}  There exists a constant $C> 0$ such that
\begin{equation}
\label{eq:integral_eta_w1_bounded3}
		\left|\int_{\omega_1 \times (0,T)} \eta_1^1 \bar{v}^{1} \ dxdt\right|\leq C  \|\bar{v}^{1}\|^{2}_{L^2(\omega_1\times (0,T))}
	\end{equation}Indeed, in order to obtain estimate \eqref{eq:integral_eta_w1_bounded3}, we multiply the first equation in  $\eqref{sistemaconvexo1}$ by $\eta_1^1$, the second one by $\eta_2^1$, integrate by parts on $Q$ and using the equation in \eqref{sistemaconvexo2} to obtain
    \begin{align*}
    \int_{\omega_1 \times (0,T)} \eta^1_1\bar{v}^{1} \ dxdt = \int_{Q} \theta_1\left( \alpha_1\theta_1\chi_{\omega_{1,d}} - \mathcal{N}_1^1(\eta,\theta) \right) + \theta_2\left( \alpha_1\theta_2\chi_{\omega_{1,d}} - \mathcal{N}_2^1(\eta,\theta) \right).
    \end{align*}  By replacing \eqref{N_1N_2masnafren}  in above identity, we obtain the following estimate: 
\begin{equation}\label{imporVVVVV}
\begin{split}
\left|\int_{\omega_1 \times (0,T)} \eta \tilde{v}^{1} \ dxdt\right| &\leq C \int_{Q} \left[ |\theta_1|^2 + |\theta_2|^2 +  2(|\theta_1|^2+|\theta_2|^2)(|p_1^1|+|p_2^1|) \right]dxdt\\
&+C \int_Q \left[|y_{1,x}||\theta_{1,x}| |\theta_1| |p^1_1| + |\theta_{1,x}|^2 |p^1_1| + |\theta_{1,x}| |\theta_1| |p^1_{1,x}|\right] dx dt\\
& +C \int_Q \left[ |y_{2,x}||\theta_{2,x}| |\theta_2| |p^1_2| + |\theta_{2,x}|^2 |p^1_2| + |\theta_{2,x}| |\theta_2| |p^1_{2,x}| \right] dxdt, 
\end{split}
\end{equation}for some positive constant $C$.

    \noindent On the other hand, by standard energy estimates (see, e.g., the appendix in \cite{GYL-Carleman-2025}), we get \begin{equation}\label{eq:energy_theta}
	    \sum_{i=1}^2 \|\theta_{i,x}\|^2_{L^2(0,T;L^2(I))} + \sum_{i=1}^2 	\|\theta_{i,xx}\|^2_{L^2(0,T;L^2(I))} \leq C \|\widetilde{v}^1\|^2_{L^2(\omega_1\times (0,T))},
    \end{equation}and
\begin{equation}\label{eq:energy_p_ik}
\begin{split}
	       &\| y_{j,x}\|^2_{L^\infty(0,T;L^2(I))} + \|y_j\|^2_{L^\infty(0,T;L^2(I))} +
           \sum_{k=1}^2 \| p_{k,x}^j\|^2_{L^\infty(0,T;L^2(I))} + \sum_{k=1}^2 	\|p_k^j\|^2_{L^\infty(0,T;L^2(I))} \\
           & \leq C \left( \|h\|^2_{L^2(\omega \times (0,T))} + \sum_{k=1}^2 \|y_k^0\|^2_{H_0^1(I)} + \sum_{k=1}^2 \|y_{k,d}^1\|_{L^2(\omega_{i,d}\times (0,T))}^2 \right),
        \end{split}
    \end{equation}for $j=1,2$.
    
    Furthermore, from estimates \eqref{eq:energy_theta} and \eqref{eq:energy_p_ik}, we deduce that
\begin{equation}\label{eq:est_v1_3}
\begin{split}
&\int_{Q} \left[ |\theta_1|^2 + |\theta_2|^2 +  2(|\theta_1|^2+|\theta_2|^2)(|p_1^1|+|p_2^1|) \right] dxdt \\
& \leq C\left(\|\theta_1\|_{L^\infty(0,T;L^2(I))}^{2} + \|\theta_2\|_{L^\infty(0,T;L^2(I))}^{2}\right) 
            \left( 1 +  \int_0^T \left(\|p_1^1(t)\|_{H^1_0(I)} + \|p_2^1(t)\|_{H^1_0(I)} \right) \right) \\
            & \leq \widehat{C}_1 \|\widetilde{v}^1\|^2_{L^2(\omega_1\times (0,T))},
\end{split}
\end{equation}where $\widehat{C}_1= \widehat{C}_1\left( h, y^0_1, y^0_2, y^1_{1, d}, y^1_{2, d}\right)$ is some positive constant.

\noindent Analogously, from the Sobolev embedding $H^1_0(I) \hookrightarrow L^\infty(I)$ and estimates \eqref{eq:energy_theta} and  \eqref{eq:energy_p_ik}, it follows that
\begin{equation}\label{eq:est_v1_1}
\begin{split}
&\int_Q \left[|y_{1,x}||\theta_{1,x}| |\theta_1| |p^1_1| + |\theta_{1,x}|^2 |p^1_1| + |\theta_{1,x}| |\theta_1| |p^1_{1,x}|\right] dx dt\\
&\leq \int_0^T \left( \|\theta_{1}\|_{L^\infty(I)}^2 + \|\theta_{1,x}\|_{L^\infty(I)}^2 \right) \left( \| y_{1,x} \|_{L^2(I)} \| p_{1}^1 \|_{L^2(I)} + \| p_{1}^1 \|_{L^1(I)} + \| p_{1,x}^1 \|_{L^1(I)} \right) dxdt\\
& \leq C \left( \| y_{1,x} \|_{L^\infty(0,T;L^2(I))} \| p_{1}^1 \|_{L^\infty(0,T;L^2(I))} + \| p_{1}^1 \|_{L^\infty(0,T;L^2(I))} \right. \\
            & \left. \quad + \| p_{1,x}^1 \|_{L^\infty(0,T;L^2(I))} \right)  \left( \|\theta_{1}\|_{L^2(0,T;H_0^1(I)}^2 + \|\theta_{1,x}\|_{L^2(0,T;H_0^1(I))}^2 \right) \\
& \leq C\left( 2+\|h\|^2_{L^2(\omega\times (0,T))} + \sum_{k=1}^2 \|y_k^0\|^2_{H_0^1(I)} \right. \\
            &\left.\quad + \sum_{k=1}^2 \|y_{k,d}^1\|_{L^2(\omega_{i,d}\times (0,T))}^2  \right)^2 \left( \|\theta_{1}\|_{L^2(0,T;H_0^1(I))}^2 + \|\theta_{1,x}\|_{L^2(0,T;H_0^1(I))}^2 \right) \\
            &\leq \widehat{C}_1\|\widetilde{v}^1\|^2_{L^2(\omega_1\times (0,T))}.
\end{split}
\end{equation}
Proceeding as in the previous estimate, we obtain that
 \begin{equation}
        \label{eq:est_v1_2}
        \int_Q \left( |y_{2,x}||\theta_{2,x}| |\theta_2| |p^1_2| + |\theta_{2,x}|^2 |p^1_2| + |\theta_{2,x}| |\theta_2| |p^1_{2,x}| \right) dxdt \leq C_2 \|\widetilde{v}^1\|^2_{L^2(\omega_1\times (0,T))}.
    \end{equation}Thus, from \eqref{imporVVVVV} and estimates \eqref{eq:est_v1_3}-\eqref{eq:est_v1_2}, we deduce \eqref{eq:integral_eta_w1_bounded3}. This proves the claim.

    Therefore, from \eqref{D2Jigual}, \eqref{eq:integral_eta_w1_bounded3},  and using that $\rho_*^{-2} \leq D$, for some $D>0$, then there exists a positive constant  $C$, independent of $\mu_1$ and $\mu_2$, such that
    \begin{equation*}
        \langle D_1^2 J_1(h,v^1,v^2), (\bar{v}^{1}, \bar{v}^{1}) \rangle \geq  (\frac{\mu_1}{D} -C) \|\bar{v}^1\|^2_{L^2(\omega_1\times (0,T))}.
    \end{equation*}Similarly, it follows that
    \begin{equation*}
         \langle D_2^2 J_1(h,v^1,v^2), (\bar{v}^{2}, \bar{v}^{2}) \rangle \geq  (\frac{\mu_2}{D}-C) \|\bar{v}^2\|^2_{L^2(\omega_2\times (0,T))}.   
    \end{equation*}
    Finally, for $\mu_i$  large enough, from the above estimates we deduce \eqref{ecsegundaderivada}. The proof is complete. 
\end{proof}

From Theorem \ref{teoconvex}, the following result holds.
\begin{corollary}
 Under the conditions of Theorem \ref{teoconvex},  the functionals $J_i$, $i = 1, 2$, given by \eqref{energy} are  convex and the pair $(v^1,v^2)$ is a Nash equilibrium. 
\end{corollary}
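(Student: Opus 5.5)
The plan is to derive the corollary directly from the uniform lower bound \eqref{ecsegundaderivada} of Theorem \ref{teoconvex}, via a second-order Taylor argument in the follower variable. Fix the leader $h$ and the other follower, say $v^2$, and consider the map $\Phi_1(w):=J_1(h,w,v^2)$ for $w\in L^2(\omega_1\times(0,T))$. First I would note that $\Phi_1$ is twice Gâteaux differentiable. This follows from the differentiability of the control-to-state map, which is already used in the proof of Theorem \ref{teoconvex} through the systems \eqref{qlambda} and \eqref{sistemaconvexo1}--\eqref{sistemaconvexo2}. The same construction gives the representation \eqref{D2Jigual} of $D_1^2J_1$ at an arbitrary point $(h,w,v^2)$.

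The key observation is that the proof of Theorem \ref{teoconvex} never uses the quasi-equilibrium property of $(v^1,v^2)$ in deriving \eqref{D2Jigual} and the Claim \eqref{eq:integral_eta_w1_bounded3}. It only uses the energy bounds \eqref{eq:energy_theta} and \eqref{eq:energy_p_ik}, and these depend on $h$, the data and the controls only through their norms. So the second step is to fix a ball $B_R$ in the control space and redo the estimate there, obtaining
\[
D_1^2J_1(h;w,v^2)\cdot(\widetilde v,\widetilde v)\ge \Big(\frac{\mu_1}{D}-C_R\Big)\|\widetilde v\|^2_{L^2(\omega_1\times(0,T))},\qquad w\in B_R ,
\]
with $C_R$ independent of $\mu_1$. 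Choosing $\mu_1$ large then makes $\Phi_1$ strictly convex on the convex set $B_R$. To see this, for $w,\bar w\in B_R$ set $g(\lambda)=\Phi_1(w+\lambda(\bar w-w))$. Then $g''(\lambda)\ge0$ on $[0,1]$, so $g$ is convex, which is exactly convexity of $\Phi_1$ along segments. The argument for $J_2$ in the variable $v^2$ is symmetric.

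Finally, let $(v^1,v^2)$ be a Nash quasi-equilibrium. By \eqref{equili}, $D\Phi_1(v^1)=0$. For any $\widehat v^1$, convexity of $g$ together with $g'(0)=0$ gives
\[
J_1(h,\widehat v^1,v^2)=g(1)\ge g(0)+g'(0)=J_1(h,v^1,v^2),
\]
and the same holds for $J_2$, so $(v^1,v^2)$ is a Nash equilibrium. The main obstacle is globality: the constant $\widehat C_1$ grows with the size of the state, so convexity is only guaranteed on bounded sets. I would handle this in one of two ways. One option is to restrict the minimization to a ball $B_R$ containing the equilibrium, with $R$ fixed a priori by the smallness assumptions used later. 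The other is to show directly that outside $B_R$ the functional satisfies $\Phi_1(w)\ge\frac{\mu_1}{2D}\|w\|^2$ minus a constant, so that no competitor there beats $v^1$ once $R$ is large. I would try the a priori ball first, since it matches the local nature of Theorem \ref{teo:main_theorem_intro}.
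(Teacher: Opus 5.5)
Your route is the paper's own idea: a positive second derivative gives convexity, and then a critical point is a minimizer, as in the Remark following the definition of Nash quasi-equilibrium. You are right that the paper's one-line deduction from Theorem \ref{teoconvex} does not suffice. That theorem bounds $D_i^2J_i$ only at the single point $(v^1,v^2)$, which gives neither convexity nor a global minimum. One small correction: the paper's proof does use the quasi-equilibrium. The estimate \eqref{eq:energy_p_ik} has no follower norms on the right only because $(y,p)$ solves \eqref{to1}, cf.\ \eqref{lmiwellpos_optimalsiste}. Your norm-dependent constant $C_R$ is the correct replacement. Localizing to a ball is the right repair, but two points remain open as you wrote them.

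\textbf{Convexity is only local.} You obtain convexity of $J_1(h,\cdot,v^2)$ only on $B_R$. Since $C_R$ grows with $R$, nothing in the argument yields the global convexity asserted in the first half of the statement. That half is proved only in this local sense, and the paper's pointwise bound gives even less.

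\textbf{The globality step.} The option you would try first does not prove the statement. The definition requires $J_1(h,v^1,v^2)\le J_1(h,\widehat v^1,v^2)$ for \emph{every} $\widehat v^1\in L^2(\omega_1\times(0,T))$, so restricting competitors to a ball changes the notion of Nash equilibrium. Your second option does work, but the parameter to enlarge is $\mu_1$, not $R$. With $\mu_1$ fixed, enlarging $R$ enlarges $C_R$ and destroys convexity on $B_R$.

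\textbf{A non-circular order of choices.}
\begin{enumerate}
\item Fix $R$, say $R=1$. Take $C_R$ uniform over $\|w\|\le R$, $\|v^2\|\le R$, and set $K:=\sup\{J_1(h,0,v^2):\|v^2\|\le R\}$. Uniformity in $v^2$ is needed because the quasi-equilibrium itself depends on $\mu_1,\mu_2$. Both $C_R$ and $K$ depend only on $R$, $h$ and the data.
\item Take $\mu_i$ so large that $\mu_1> DC_R$ and $\|v^i\|\le D\mu_i^{-1}\|p_1^i\|_{L^2(Q)}\le R$, using \eqref{carac1} and \eqref{lmiwellpos_optimalsiste}.
\item Now $0,v^1\in B_R$, so convexity on $B_R$ together with \eqref{equili} gives $J_1(h,v^1,v^2)\le J_1(h,0,v^2)\le K$.
\item For $\|\widehat v^1\|>R$, simply drop the tracking term: $J_1(h,\widehat v^1,v^2)\ge\frac{\mu_1}{2}\int_{\omega_1\times(0,T)}\rho_*^2|\widehat v^1|^2\ge\frac{\mu_1R^2}{2D}\ge K$ once $\mu_1\ge 2DK/R^2$. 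No ``minus a constant'' is needed.
\end{enumerate}
This gives a genuine global Nash equilibrium together with convexity on $B_R$. That is more than the paper's argument actually justifies.
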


\section{ \bf Observability inequality and controllability for linearized system}

In this section, we will determine an observability inequality for the linearized system corresponding to the system \eqref{to1}. This inequality is the most important approach to analyzing controllability. In that sense, we consider the linearized system defined by 
\begin{align} \label{lin1}
	\begin{cases}
		y_{1,t}-a(0)y_{1,xx}+F_1'(0)y_1  = G_1+h\chi_\omega-\frac{1}{\mu_1}\rho_*^{-2}p_1^1\chi_{\omega_1}-\frac{1}{\mu_2}\rho_*^{-2}p_1^2\chi_{\omega_2}, &\text{ in } Q,\\
		y_{2,t}-a(0)y_{2,xx}+F_2'(0)y_2  +cy_1 = G_2, &\text{ in } Q,\\ 
		-p_{1,t}^1-a(0)p_{1,xx}^1+F_1'(0)p_1^1+cp_2^1=G_3+\alpha_1 y_1 \chi_{\omega_{1,d}}, &\text{ in } Q,\\
		-p_{1,t}^2-a(0)p_{1,xx}^2+F_1'(0)p_1^2+cp_2^2=G_4+\alpha_2 y_1\chi_{\omega_{2,d}},&\text{ in } Q,\\
		-p_{2,t}^1-a(0)p_{2,xx}^1+F_2'(0)p_2^1=G_5+\alpha_1 y_2\chi_{\omega_{1,d}}, &\text{ in } Q,\\
		-p_{2,t}^2-a(0)p_{2,xx}^2+F_2'(0)p_2^2=G_6+\alpha_2 y_2 \chi_{\omega_{2,d}}, &\text{ in } Q,\\
  		y_1(0,t) = y_1(L,t) =0,\,  y_2(0,t) = y_2(L,t) = 0, &\text{ on } (0,T),\\ 
		p_1^i(0,t) = p_1^i(L,t) =0,\,  p_2^i(0,t) = p_2^i(L,t) = 0, \, i=1,2,&\text{ on } (0,T),\\ 
		y_1(x,0) = f(x),\,  y_2(x,0) = g(x), &\text{ in } I,\\
		p_1^i(x,T) =0,\,  p_2^i(x,T) =0, \, i=1,2,&\text{ in } I,
	\end{cases}		
\end{align}where $G_i \in L^2(Q)$, $i = 1, \ldots, 6$.

Throughout this section, we assume that $\omega_{1,d}=\omega_{2,d}$,  and denote this identity by
\begin{equation}\label{ec9}	\omega_{d}:=\omega_{1,d}=\omega_{2,d}.
\end{equation}

First, we will introduce some weight functions that will be relevant to the study of controllability. The following  result, due to Fursikov and Imanuvilov in \cite{FurImanu}, is fundamental.
\begin{lemma}
	There is a satisfying $\eta_0 \in C^2(\bar{I})$ function:
	\begin{equation} \label{imafur}
    		\begin{cases}
			\eta_0(0)=\eta_0(L)=0, \\
			\eta_0(x)>0 \text{  in  }  I, \\
			|\eta_0'(x)|>0 \text{  in  } I - \omega_0,
		\end{cases}
	\end{equation}    
	where $\omega_0 \subset \subset \omega$.
\end{lemma}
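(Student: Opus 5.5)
The statement is the classical Fursikov--Imanuvilov auxiliary function lemma in one space dimension, so I will construct $\eta_0$ explicitly rather than invoke a Morse-function argument. Fix a point $x_0\in\omega_0$ and a small interval $\omega_0=(x_0-\delta,x_0+\delta)\subset\subset\omega$. The idea is to build a $C^2$ function on $[0,L]$ that vanishes at the endpoints, is positive inside, is strictly increasing on $[0,x_0-\delta]$, strictly decreasing on $[x_0+\delta,L]$, and has its unique critical point (a maximum) inside $\omega_0$. Such a function automatically satisfies all three conditions in \eqref{imafur}.

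\textbf{Construction.} I will first define a derivative profile $\psi\in C^1([0,L])$ with the following properties:
\begin{itemize}
\item $\psi>0$ on $[0,x_0-\delta]$;
\item $\psi<0$ on $[x_0+\delta,L]$;
\item $\psi$ changes sign only inside $\omega_0$.
\end{itemize}
A concrete choice is $\psi(x)=A-x$ modified so that its zero lies at $x_0$, i.e.\ $\psi(x)=x_0-x$. Then I set $\eta_0(x)=\int_0^x\psi(s)\,ds$. This gives $\eta_0\in C^2$, $\eta_0(0)=0$, and $\eta_0'=\psi\neq0$ outside $\{x_0\}\subset\omega_0$.

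\textbf{Fixing the right endpoint.} The second boundary condition $\eta_0(L)=0$ requires $\int_0^L\psi=0$. For the linear choice this forces $x_0=L/2$, which need not lie in $\omega$. So I will instead use a piecewise-weighted profile:
\[
\psi(x)=\beta\,\phi_1(x)-\gamma\,\phi_2(x),
\]
where $\phi_1\ge0$ is smooth, positive on $[0,x_0)$ and zero on $[x_0,L]$, and $\phi_2\ge0$ is smooth, zero on $[0,x_0]$ and positive on $(x_0,L]$. Both are chosen $C^1$ with only polynomial-order vanishing at $x_0$; for instance $\phi_1=(x_0-x)_+^2$ and $\phi_2=(x-x_0)_+^2$. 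The positive constants $\beta,\gamma$ are then tuned so that $\beta\int_0^{x_0}\phi_1=\gamma\int_{x_0}^L\phi_2$, which ensures $\eta_0(L)=0$. With this choice $\psi$ vanishes only at $x_0$, so $|\eta_0'|>0$ on $I\setminus\omega_0$.

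\textbf{Positivity.} On $(0,x_0]$, $\eta_0$ is strictly increasing from $0$, hence positive. On $[x_0,L)$, $\eta_0$ is strictly decreasing down to $\eta_0(L)=0$, hence also positive. This gives $\eta_0>0$ in $I$.

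\textbf{Regularity and the main check.} The remaining point is that $\eta_0\in C^2(\bar I)$, i.e.\ that $\psi$ is $C^1$ across $x_0$. This holds because the squared positive parts have vanishing value and derivative at $x_0$. If more smoothness were wanted, I would replace the squares by higher powers. This regularity check and the balancing of $\beta,\gamma$ are the only genuine steps, and neither is a real obstacle. Alternatively, one may simply cite \cite{FurImanu} directly, since the paper attributes the lemma there.
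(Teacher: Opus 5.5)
Your construction is correct, and it differs from the paper, which gives no argument and simply attributes the lemma to \cite{FurImanu}. There the result is proved for a general bounded domain in $\mathbb{R}^n$. One starts from a Morse-type function that vanishes on the boundary and is positive inside, and then uses a diffeomorphism to move its finitely many critical points into $\omega_0$. In one dimension this machinery is unnecessary, and your profile $\psi=\beta(x_0-x)_+^2-\gamma(x-x_0)_+^2$ does everything directly. The balancing is achieved by $\beta=(L-x_0)^3$, $\gamma=x_0^3$. Since $\psi\in C^1([0,L])$, $\eta_0\in C^2(\bar I)$. The only critical point is $x_0\in\omega_0$, and monotonicity on each side of $x_0$ gives positivity. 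Your route is self-contained and even yields slightly more than stated. Since $\eta_0'(0)=\beta x_0^2>0$ and $\eta_0'(L)=-\gamma(L-x_0)^2<0$, you get $|\eta_0'|>0$ on $\bar I\setminus\omega_0$, which is the form the Imanuvilov--Yamamoto Carleman estimate actually uses. The citation only adds generality to higher dimensions, which is irrelevant here. Two cosmetic remarks. First, $\omega_0$ is prescribed in the statement, so you should not define it as $(x_0-\delta,x_0+\delta)$; this is harmless, because your argument only uses some $x_0\in\omega_0$ and so works for any nonempty open $\omega_0\subset\subset\omega$. Second, $(x_0-x)_+^2$ is $C^1$ rather than smooth, which is exactly the regularity you need.
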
Now, we consider the function $\ell \in C^{\infty}(0,T)$, defined by
\begin{align}\label{llll_fun}
    \begin{cases}
	\ell(t)\geq \frac{T^2}{4}, &\text{ in }[0,\frac{T}{2}],\\
	\ell(t)= t(T-t), &\text{ in }[\frac{T}{2},T].
    \end{cases}
\end{align}Thus , for $\lambda>0$, let us define the following functions:
\begin{equation} \label{alfa}
\alpha(x,t):=\frac{e^{4\lambda|\eta_0|_\infty}-e^{\lambda (2|\eta_0|_\infty+\eta_0(x))}}{t(T-t)} =\frac{\overline{\alpha}(x)}{t(T-t)}.
\end{equation}
\begin{equation}\label{alfa-2}
	\tilde{\alpha}(x,t):=\frac{e^{4\lambda|\eta_0|_\infty}-e^{\lambda (2|\eta_0|_\infty+\eta_0(x))}}{\ell(t)}.
\end{equation}
    \begin{equation}
    \xi(x,t):=\frac{e^{\lambda (2|\eta_0|_\infty+\eta_0(x))}}{t(T-t)} .
\end{equation}
\begin{equation}
    \tilde{\xi}(x,t):=\frac{e^{\lambda (2|\eta_0|_\infty+\eta_0(x))}}{\ell(t)} .
\end{equation}Moreover, we denote by 
\begin{equation}\label{minmaxminn}
	\begin{cases}
    \beta_1:=\underset{x \in \bar{I}}{min}\text{  }\overline{\alpha}(x,t)= e^{4\lambda|\eta_0|_\infty}-e^{3\lambda |\eta_0|_\infty},\qquad  
    &\beta_2:=\underset{x \in \bar{I}}{max}\text{  }\overline{\alpha}(x,t)=e^{4\lambda|\eta_0|_\infty}-e^{2\lambda|\eta_0|_\infty}\\
    \displaystyle\alpha^{\circ}(t):=\min_{x\in I}\alpha(x,t),
     &\displaystyle \alpha^*(t):=\max_{x\in I}\alpha(x,t),\\
    \displaystyle\tilde{\alpha}^{\circ}(t):=\min_{x\in I}\tilde{\alpha}(x,t),
     &\displaystyle \tilde{\alpha}^*(t):=\max_{x\in I}\tilde{\alpha}(x,t).
    \end{cases}
\end{equation}

\begin{remark}
	From \eqref{llll_fun}-\eqref{minmaxminn}, we have that
\begin{equation}\label{pesos-0}
	 e^\frac{s\beta_1}{\ell(t)}<e^{s\tilde{\alpha}}<e^\frac{s\beta_2}{\ell(t)},\quad \forall s>0.
	 \end{equation}
\end{remark}

\begin{remark}
For  $\lambda>0$  large enough,  we obtain that

\begin{equation} \label{pesos-1}
    	\left\{\begin{array}{ll}
    \displaystyle 	\frac{4}{5}\tilde{\alpha}^*(t)\leq \tilde{\alpha}(x,t)\leq \tilde{\alpha}^*(t),\,\,\, &\forall\,\, (x,t)\in I\times(0,T),\\
    \displaystyle 	\tilde{\alpha}^{\circ}(t)\leq \tilde{\alpha}(x,t)\leq \frac{5}{4}\tilde{\alpha}^{\circ}(t),\,\,& \forall\, (x,t)\in I\times(0,T),\\
    \displaystyle\tilde{\alpha}^*(t)<\frac{5}{4}\tilde{\alpha}^{\circ}(t),\,\,&\forall\,\, t\in (0,T).
    \end{array}\right.
\end{equation}

\end{remark}

\noindent In order to state the main result for this section, we  define the weight functions
\begin{equation} \label{pesos}
\begin{cases}
    \rho = e^{s\alpha},\\
    \tilde{\rho}=e^{s\tilde{\alpha}},\\
    \rho_*(t) = e^{s\alpha^*},\\
  \tilde{\rho}_*(t) = e^{s\tilde{\alpha}^*}.
\end{cases}
\end{equation}Therefore, $\rho_*(t)$ is a non-decreasing strictly positive function blowing up at $t=T$, and we consider
\begin{equation} \label{pesos2}
\begin{cases}
    \rho_0 = e^{4s\tilde{\alpha}^{*}/5}, \, \, \rho_1 = e^{4s\tilde{\alpha}^{*}/5}(s\tilde{\gamma})^{-10},\\
    \Hat{\rho}_0 = e^{3s\tilde{\alpha}^{*}/5},\, \, \Hat{\rho}_1 = e^{3s\tilde{\alpha}^{*}/5}(s\tilde{\gamma})^{-1}.
\end{cases}
\end{equation}
where $\gamma=\dfrac{1}{t(T-t)},\ \tilde{\gamma}=\dfrac{1}{\ell(t)}$.
\begin{remark}
From \eqref{llll_fun}-\eqref{pesos2}, there exist a constant $C>0$, such that
\begin{equation}\label{weight-linear1}
\begin{cases}
 \hat{\rho}_{0}\leq C \rho_{1},\\ 
 |\hat{\rho}_{0,t}|\leq C \rho_{0},\\ 
 \hat{\rho}_{0}\leq C \rho_{0}, \\   
 |\hat{\rho}_{1,t}\hat{\rho}_{1}|\leq C \hat{\rho}^{2}_{0},\\
 \hat{\rho}_{1}\leq C\hat{\rho}_{0}.
\end{cases}
\end{equation}Moreover, from \eqref{weight-linear1}, the weight functions satisfy
\begin{equation}\label{eq:compara_rhos3}
	\hat\rho_1 \leq C\hat\rho_0 \leq  C\rho_{1}\leq C\rho_{0}\leq C\tilde \rho, \qquad 
    \text{and} \qquad \tilde \rho \leq C \hat\rho_1^2.   
\end{equation}

\end{remark}

Thus, the main result of this section is as follows.

\begin{theorem} 
\label{teo:linearized_control}
 Suppose that the conditions \eqref{condiciones_a4}-\eqref{condiciones_a4c(x,t)} and  \eqref{ec9} are satisfied,  with $\omega_d\cap \omega\neq \emptyset$, $\tilde{\rho} G_i\in L^2(Q), i=1,\ldots,6$ and the constants $\mu_i, i=1,2$, are large enough. Then,  the system \eqref{lin1} is null-controllable.  More precisely, for any $(f, g) \in  [H_0^1(I)]^2$, there exists a control-state $(y_1, y_2,p_1^1,p_1^2,p_2^1,p_2^2,h)$ satisfying
\begin{equation}\label{est-linear1}
    \begin{cases}
        \rho_1 h \in L^2(\omega\times(0,T)),\\
        \left(\rho_0 y_1, \rho_0 y_2, \rho_0 p_1^1, \rho_0 p_1^2, \rho_0 p_2^1,\rho_0 p_2^2\right) \in [L^2(Q)]^6,
    \end{cases}
\end{equation}    
\begin{equation}\label{est-linear2}
    \begin{split}
&\displaystyle  N_1(y_1, y_2,p_1^1,p_1^2,p_2^1,p_2^2) =\sum^{2}_{i=1}\int_{Q} \hat{\rho}^{2}_{0}| y_{i,x}|^{2}\,dxdt+\sum^{2}_{i,j=1}\int_{Q}\hat{\rho}^{2}_{0}| p^{i}_{j,x}|^{2}\,dxdt\\
&\displaystyle \leq C\left(   \int_{\omega\times(0,T)}\rho^{2}_{1}|h|^{2}\,dxdt+\sum^{6}_{i=1}\int_{Q}\tilde{\rho}^{2}|G_{i}|^{2}\,dxdt+\sum^{2}_{i,j=1}\int_{Q}\rho^{2}_{0}|p^{i}_{j}|^{2}\,dxdt \right.\\
&\,\,\,\,\,\,\,\left.+\int_{I}|f|^{2}\,dx+\int_{I}|g|^{2}\,dx+\sum^{2}_{i=1}\int_{Q}\rho^{2}_{0}|y_{i}|^{2}\,dxdt \right),
\end{split}
\end{equation}and
\begin{equation}\label{est-linear3}
\begin{split}
&\displaystyle  N_2(y_1, y_2,p_1^1,p_1^2,p_2^1,p_2^2) =\sum^{2}_{i=1}\sup_{t\in [0,T]}\int_{I} \hat{\rho}^{2}_{1}| y_{i,x}|^{2}\,dx+\sum^{2}_{i,j=1}\sup_{t\in [0,T]}\int_{I}\hat{\rho}^{2}_{1}| p^{i}_{j,x}|^{2}\,dx\\
& \sum^{2}_{i=1}\int_{Q}\hat{\rho}^{2}_{1}(| y_{i,xx}|^{2}+|y_{i,t}|^{2})\,dxdt
+ \sum^{2}_{i,j=1}\int_{Q}\hat{\rho}^{2}_{1}(| p^{i}_{j,xx}|^{2}+|p^{i}_{j,t}|^{2})\,dxdt\\
&\displaystyle \leq C\left(   \int_{\omega\times(0,T)}\rho^{2}_{1}|h|^{2}\,dxdt+\sum^{6}_{i=1}\int_{Q}\tilde{\rho}^{2}|G_{i}|^{2}\,dxdt+\sum^{2}_{i,j=1}\int_{Q}\rho^{2}_{0}|p^{i}_{j}|^{2}\,dxdt \right.\\
&\,\,\,\,\,\,\,\left.+\int_{I}|f_{x}|^{2}\,dx+\int_{I}|g_{x}|^{2}\,dx+\sum^{2}_{i=1}\int_{Q}\rho^{2}_{0}|y_{i}|^{2}\,dxdt \right).
\end{split}
\end{equation}

\end{theorem}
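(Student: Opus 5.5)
The plan is the standard duality route for hierarchical (Stackelberg--Nash) null controllability. The first step is to write down the adjoint of the optimality system \eqref{lin1}. To the state pair $(y_1,y_2)$ correspond backward variables $(\varphi_1,\varphi_2)$, and to the four follower adjoints $p^i_j$ correspond forward variables $(\psi^i_1,\psi^i_2)$, $i=1,2$. The resulting coupled system reads
\begin{align*}
\begin{cases}
-\varphi_{1,t}-a(0)\varphi_{1,xx}+F_1'(0)\varphi_1+c\varphi_2=g_1+\alpha_1\psi^1_1\chi_{\omega_d}+\alpha_2\psi^2_1\chi_{\omega_d},\\
-\varphi_{2,t}-a(0)\varphi_{2,xx}+F_2'(0)\varphi_2=g_2+\alpha_1\psi^1_2\chi_{\omega_d}+\alpha_2\psi^2_2\chi_{\omega_d},\\
\psi^i_{1,t}-a(0)\psi^i_{1,xx}+F_1'(0)\psi^i_1=-\tfrac{1}{\mu_i}\rho_*^{-2}\varphi_1\chi_{\omega_i},\\
\psi^i_{2,t}-a(0)\psi^i_{2,xx}+F_2'(0)\psi^i_2+c\psi^i_1=0,
\end{cases}
\end{align*}
with $\varphi(T)=\varphi^T$ and $\psi(0)=0$.

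The key estimate to prove is a weighted observability inequality that uses only the observation $\int_{\omega\times(0,T)}\rho_1^{-2}|\varphi_1|^2$:
\[
\sum_i\int_Q\rho_0^{-2}|\varphi_i|^2+\sum_{i,j}\int_Q\rho_0^{-2}|\psi^i_j|^2+\|\varphi(0)\|^2_{L^2}\le C\Big(\sum_i\int_Q\tilde\rho^{-2}|g_i|^2+\int_{\omega\times(0,T)}\rho_1^{-2}|\varphi_1|^2\Big).
\]
It is proved in four stages.
\begin{enumerate}
\item Apply the Fursikov--Imanuvilov Carleman estimate, with the weights $\alpha,\xi$ built from $\eta_0$ and $\omega_0\subset\subset\omega\cap\omega_d$, to each scalar equation. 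This gives local terms on $\omega_0$ for $\varphi_1,\varphi_2,\psi^i_j$.
\item Eliminate the local term of $\varphi_2$. On $\omega_0$, use the first equation and $c>c_0$ from \eqref{condiciones_a4c(x,t)} to write $c\varphi_2=-\varphi_{1,t}-\dots$. Multiply by a cutoff times $s^k\xi^k e^{-2s\alpha}\varphi_2$ and integrate by parts. This transfers the term to $\varepsilon(\text{global terms of }\varphi_2)+C\,(\text{higher local weights of }\varphi_1)$, together with local terms in $\psi$.
\item Absorb the $\psi$ terms. The $\psi$-equations are forward with zero initial data and a source of size $\mu_i^{-1}\rho_*^{-2}\varphi_1$. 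Because $\omega_i\cap\omega=\varnothing$, $\psi$ is not observed directly. Instead, bound $\psi$ by energy estimates with weights $e^{-s\alpha^*}$ in terms of $\mu_i^{-1}\|\rho_*^{-1}\ldots\varphi_1\|$. Here the choice $\rho_*=e^{s\alpha^*}$ with $\alpha^*=\max_x\alpha$ is what makes $\rho_*^{-2}\le e^{-2s\alpha}$. For $\mu_i$ large, these terms are then absorbed into the left side.
\item Upgrade from $(0,T)$ to weights nonvanishing at $t=0$. Replace $\alpha$ by $\tilde\alpha$ using $\ell$ and standard energy estimates on $[0,T/2]$; the relations \eqref{pesos-1} give the passage to the $\rho_0,\rho_1$ weights.
\end{enumerate}

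With observability in hand, the control is built by a Lax--Milgram (Fursikov--Imanuvilov) argument. Define the bilinear form $a(\hat\varphi,\varphi)=\int_Q(L^*\hat\varphi)(L^*\varphi)\tilde\rho^{-2}+\int_{\omega\times(0,T)}\rho_1^{-2}\hat\varphi_1\varphi_1$ on the completion of smooth adjoint states; it is coercive by the observability inequality. The linear form involving $f,g,G_i$ is continuous on this space, by the same inequality and the hypothesis $\tilde\rho G_i\in L^2(Q)$. Take the solution $\hat\varphi$ and set
\[
y=\tilde\rho^{-2}L^*\hat\varphi,\qquad h=-\rho_1^{-2}\hat\varphi_1\chi_\omega .
\]
These solve \eqref{lin1} with $y(T)=0$ (the terminal condition arises by duality), and \eqref{est-linear1} holds with $\rho_0,\rho_1$ because $\rho_0\le C\tilde\rho$.

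Estimates \eqref{est-linear2} and \eqref{est-linear3} then follow from weighted energy estimates. Multiply the $y_i$ and $p^i_j$ equations by $\hat\rho_0^2y_i$ and by $\hat\rho_1^2(-y_{i,xx})$, $\hat\rho_1^2 y_{i,t}$, respectively, and control the commutators via $|\hat\rho_{0,t}|\le C\rho_0$ and $|\hat\rho_{1,t}\hat\rho_1|\le C\hat\rho_0^2$ from \eqref{weight-linear1}. The follower terms $\mu_i^{-1}\rho_*^{-2}p^i_1$ are harmless because $\rho_*^{-2}$ is bounded.

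The main obstacle is the Carleman step, namely the simultaneous elimination of the local $\varphi_2$ term through the coupling $c$ and the absorption of the $\psi$ terms (forward equations without direct observation). The fix will be to exploit the large-$\mu_i$ smallness together with $\rho_*^{-2}\le C\rho^{-2}$, and to run the $\psi$-estimates with weights depending only on $t$.
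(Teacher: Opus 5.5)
There is a genuine gap in how you treat the follower adjoints, and the key estimate has the wrong weight on its left-hand side.

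\textbf{The follower adjoints.} Your adjoint system has no sources in the $\psi$-equations, so your observability inequality covers only $H_3=\dots=H_6=0$. Yet you conclude that $\tilde\rho^{-2}L^*\hat\varphi$ solves \eqref{lin1}, which also contains $p^1_1,p^2_1,p^1_2,p^2_2$ and the data $G_3,\dots,G_6$. If, as in the paper, these components are obtained as weighted residuals of the $\psi$-equations, you need the inequality with arbitrary $H_3,\dots,H_6$, weighted like the other sources. Then Stage 3 breaks down. First, $\psi$ is no longer an $O(\mu_i^{-1})$ functional of $\rho_*^{-2}\varphi_1$. Second, a forward energy estimate controls $\psi$ only in a time weight $\theta(t)\le C\tilde\rho^{-2}(x,t)$ for all $x$, i.e.\ $\theta\lesssim e^{-2s\tilde\alpha^*}$. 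The $\psi$-terms you must absorb in the $\varphi$-Carleman (the couplings $\alpha_k\psi^k_j\chi_{\omega_d}$ and the local terms on $\omega_0$) carry $e^{-2s\alpha}(s\gamma)^k$. In $\omega_0$, where $\eta_0$ is maximal, this weight is as large as $e^{-2s\alpha^\circ}$, and $e^{-2s\alpha^\circ}/e^{-2s\alpha^*}\to\infty$ as $t\to T$.

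The paper overcomes exactly this. It passes to $\varrho_i=\alpha_1\psi^1_i+\alpha_2\psi^2_i$ (this is where $\omega_{1,d}=\omega_{2,d}$ enters) and proves a Carleman estimate for the four-equation system \eqref{ad2}. It then eliminates the local $\varrho_i$-terms through the $\varphi_i$-equations on $\omega_0\subset\subset\omega_d\cap\omega$. The largeness of $\mu_i$ enters only through lower-order leftovers such as $\int_Q\rho_*^{-8/5}|\varrho_1|^2$.

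Alternatively, you may keep source-free $\psi$-equations. Then your perturbative Stage 3 is sound and even dispenses with the local elimination. An unweighted bound $\|\psi\|^2_{L^2(Q)}\le C\mu_i^{-2}\int_Q\rho_*^{-4}|\varphi_1|^2$ suffices; weights $e^{-s\alpha^*}$ go the wrong way, since $e^{-2s\alpha}\ge e^{-2s\alpha^*}$. But then the $p^i_j$ must be built separately:
\begin{enumerate}
\item include $\int_Q G_k\psi[\varphi]$ in the linear form;
\item define $p^i_j$ as the solutions of the backward $p$-equations with sources $G_k+\alpha_i y_j\chi_{\omega_d}$ and zero final data;
\item check, by integrating these equations by parts against $\psi[\varphi]$, that $(y,p,h)$ solves \eqref{lin1} in the transposition sense;
\item obtain $\rho_0p^i_j\in L^2(Q)$ from a backward energy estimate with the nondecreasing weight $\rho_0^2\le\tilde\rho^2$.
\end{enumerate}
None of this is in your proposal.

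\textbf{The weight in the key estimate.} Since $\frac45\tilde\alpha^*\le\tilde\alpha$, you have $\rho_0^{-2}\ge\tilde\rho^{-2}$. So \eqref{pesos-1} lets you weaken the right-hand side of a Carleman estimate but never strengthen its left-hand side, and Stage 4 cannot produce $\int_Q\rho_0^{-2}|\varphi_i|^2$ on the left. In fact the inequality as written is false. Through your own Lax--Milgram argument it would yield, for every $G_1$ with $\rho_0G_1\in L^2(Q)$, a state with $\tilde\rho y_1\in L^2(Q)$. That state would decay at $t=T$ strictly faster than its source, which is impossible: test the $y_1$-equation against a cutoff supported in $I\setminus\overline{\omega}$, where the follower terms carry the factor $\rho_*^{-2}$ and are negligible. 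Your continuity argument for the linear form only needs $\int_Q\tilde\rho^{-2}|\varphi_i|^2$ on the left. The Carleman estimate does provide that, after replacing $\alpha$ by $\tilde\alpha$.
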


Now, we will present some important results that will allow us to prove Theorem \ref{teo:linearized_control}.

\subsection{Carleman estimates}
In this section we establish Carleman's inequality and observability inequality that allows us to prove the null controllability of \eqref{lin1}. We start by proving an Carleman's inequality for a single parabolic equation, after proving an observability inequality for the adjoint systems associated to the linearized version \eqref{lin1} of \eqref{to1}.

\subsubsection{Carleman's inequality for single parabolic equation}
The starting point is a well-known global Carleman's inequality for solutions to parabolic equations. First, let us consider a single parabolic equation
\begin{align} \label{eq par}
		\begin{cases}
			z_{t}-a(0)z_{xx}= F, &\text{ in } Q,\\
   			z(0,t) = z(L,t) =0,  &\text{ on } (0,T),\\
			z(x,0) = z^0, &\text{ in } I,
		\end{cases}		
	\end{align}where $F\in L^2(Q)$ and $z^0\in L^2(I)$.

\begin{definition}
	For $m \in \mathbb{R}$, we define
        \begin{align*}
           	    	&I(m,z) := \int_Q \rho^{-2}(s\gamma)^{m-2}|z_x|^2dxdt +\int_Q \rho^{-2}(s\gamma)^{m}|z|^2dxdt\\
	    		&\tilde{I}(m,z) := \int_Q \tilde{\rho}^{-2}(s\tilde{\gamma})^{m-2}|z_x|^2dxdt +\int_Q \tilde{\rho}^{-2}(s\tilde{\gamma})^{m}|z|^2dxdt,   
	\end{align*}where $z$ is the solution of the equation \eqref{eq par}.
\end{definition}The following result, due to Imanuvilov and Yamamoto (see \cite{Imanu-Yama}), provides us a global Carleman inequality for the solutions of equation \eqref{eq par}.

\begin{lemma}[First Carleman's inequality]\label{lema carleman 1}
Let $\omega_0\subset\subset I$ be a nonempty open subset. For any $m\in \mathbb{R}$, there exist constants $s_m>0$ e $C_m>0$ such that, for any $s>s_m$ and every $z^0 \in L^2(I)$, the
solution $z$ of \eqref{eq par} satisfies
    \begin{equation} \label{carleman1}
        I(m,z)\leq C_m\left( \int_{\omega_0 \times (0,T)} \rho^{-2}(s\gamma)^m\vert z \vert^2 dxdt +\int_Q \rho^{-2}(s\gamma)^{m-3}\vert F\vert^2dxdt \right),
    \end{equation}  where $C_m = C_m(I,\omega_0,m)$ and $s_m=(T+T^2)\sigma_m$, with $\sigma_m = \sigma_m(I,\omega_0,m)$.
\end{lemma}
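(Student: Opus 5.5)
The plan is the classical Fursikov--Imanuvilov weighted-energy argument for the constant-coefficient heat operator $\partial_t - a(0)\partial_{xx}$ on $I=(0,L)$. Since $a(0)\ge M_0>0$ is constant, we rescale time by $\tau=a(0)t$ to reduce to $a(0)=1$, at the price of harmless changes of constants. Recall that $\rho^{-2}=e^{-2s\alpha}$ with $\alpha=\overline{\alpha}(x)\gamma(t)$. It suffices first to treat $m=3$, the standard weight powers $s^3\gamma^3|z|^2$ and $s\gamma|z_x|^2$ (here $\lambda$ is fixed large and absorbed into the constants). The general $m$ then follows by applying the $m=3$ case to $w=(s\gamma)^{(m-3)/2}z$. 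This $w$ solves $w_t-a(0)w_{xx}=(s\gamma)^{(m-3)/2}F+\partial_t\big((s\gamma)^{(m-3)/2}\big)z$ with the same boundary conditions. Because $|\gamma_t|\le CT\gamma^2$, the extra source term is bounded by $CT s^{-1}(s\gamma)^{(m-3)/2}(s\gamma)\,|z|$. After multiplication by $(s\gamma)^{-3}$ it is absorbed by the left-hand side once $s\ge \sigma_m(T+T^2)$.

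For $m=3$ we set $\psi=e^{-s\alpha}z$. Since $\alpha\to+\infty$ as $t\to0^+$ and $t\to T^-$, $\psi$ vanishes at both time endpoints; this is why no initial datum $z^0$ appears on the right-hand side. We write $e^{-s\alpha}(\partial_t-\partial_{xx})(e^{s\alpha}\psi)=M_1\psi+M_2\psi$ with
\[
M_1\psi=-\psi_{xx}-s^2\alpha_x^2\psi+s\alpha_t\psi,\qquad M_2\psi=\psi_t+2s\alpha_x\psi_x+2s\alpha_{xx}\psi,
\]
together with a remainder $R\psi=-s\alpha_{xx}\psi$. The identity then reads $M_1\psi+M_2\psi=e^{-s\alpha}F+R\psi$. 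Squaring and dropping $\|M_1\psi\|^2+\|M_2\psi\|^2$ gives $2(M_1\psi,M_2\psi)_{L^2(Q)}\le 2\|e^{-s\alpha}F\|^2+2\|R\psi\|^2$.

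The core of the proof is the expansion of the cross term $(M_1\psi,M_2\psi)$ by integration by parts in $x$ and $t$. The boundary terms in $t$ vanish because $\psi(\cdot,0)=\psi(\cdot,T)=0$. In $x$ the Dirichlet condition leaves only terms of the form $-s\alpha_x|\psi_x|^2$ at $x=0,L$. These have a good sign because $\alpha_x=-\lambda\eta_0'e^{\lambda(\cdots)}\gamma$, with $\eta_0'(0)>0$ and $\eta_0'(L)<0$ by \eqref{imafur}. The dominant interior terms are $3s^3\lambda^4\int\xi^3|\eta_0'|^4\psi^2$ and $s\lambda^2\int\xi|\eta_0'|^2\psi_x^2$, up to lower-order terms. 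The lower-order terms involve $\alpha_t,\alpha_{tt},\alpha_{xt}$, and we control them using $|\alpha_t|\le CT\xi^2$ and $|\alpha_{tt}|\le CT^2\xi^3$. These are absorbed for $\lambda$ large and then $s\ge\sigma(T+T^2)$; this step is where the stated form of $s_m$ originates. On $I\setminus\omega_0$ we have $|\eta_0'|\ge c>0$, so the positive terms dominate $s^3\int\xi^3\psi^2+s\int\xi\psi_x^2$ minus the same integrals over $\omega_0\times(0,T)$.

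The last step is to localize the gradient term. Returning to $z$ yields $I(3,z)$ on the left, plus the observation $s^3\int_{\omega_0'}\rho^{-2}\gamma^3|z|^2$ and a gradient observation $s\int_{\omega_0'}\rho^{-2}\gamma|z_x|^2$ on some $\omega_0'$ with $\omega_0\subset\subset\omega_0'$. We obtain the statement's $\omega_0$ by relabeling, choosing $\eta_0$ with its critical set inside a smaller open set. To remove the gradient observation we multiply the equation by $\zeta\, s\gamma\rho^{-2}z$, where $\zeta$ is a cutoff equal to $1$ on the smaller set, and integrate by parts. Young's inequality then bounds the local gradient by $\varepsilon I(3,z)$ plus $C s^3\int_{\omega_0}\rho^{-2}\gamma^3 z^2$ plus the $F$ term. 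I expect the main obstacle to be the bookkeeping in the cross-term expansion, in particular verifying that every time-derivative term is genuinely lower order uniformly in $T$. This is the step that forces the specific threshold $s_m=(T+T^2)\sigma_m$.
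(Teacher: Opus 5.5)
The paper gives no proof of this lemma; it quotes it from Imanuvilov--Yamamoto. Your proposal reconstructs the standard argument behind that citation: Fursikov--Imanuvilov conjugation for $m=3$, removal of the local gradient term with a cutoff, and reduction of arbitrary $m$ to $m=3$. What you add over the citation is an explicit account of where the threshold $s_m=(T+T^2)\sigma_m$ comes from, and of how the constants depend on $\lambda$ and $a(0)$ (the statement suppresses this).

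Your reduction $w=(s\gamma)^{(m-3)/2}z$ is particularly clean here because the paper's $\gamma$ depends only on $t$. Hence $w_x=(s\gamma)^{(m-3)/2}z_x$ and $I(3,w)=I(m,z)$ exactly. The only new source term satisfies $\rho^{-2}\big|\partial_t\big((s\gamma)^{(m-3)/2}\big)z\big|^2\le C\,T^2s^{-2}\rho^{-2}(s\gamma)^{m-1}|z|^2$, with $C$ depending on $m$. This is absorbed by $\rho^{-2}(s\gamma)^m|z|^2$ once $s^3\gamma\ge CT^2$. Since $\gamma\ge 4/T^2$, that condition is implied by $s\ge\sigma_m(T+T^2)$.

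One sign has to be corrected. With $\alpha>0$ and $\psi=e^{-s\alpha}z$, the conjugated operator is $\psi_t+s\alpha_t\psi-\psi_{xx}-2s\alpha_x\psi_x-s\alpha_{xx}\psi-s^2\alpha_x^2\psi$. So you must take $M_2\psi=\psi_t-2s\alpha_x\psi_x-2s\alpha_{xx}\psi$.

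The signs you wrote for $M_2$ belong to the other convention, namely $\alpha<0$ with weight $e^{s\alpha}$. With them, $M_1\psi+M_2\psi$ differs from $e^{-s\alpha}F+R\psi$ by $4s(\alpha_x\psi)_x$, which cannot be absorbed. Moreover, the boundary term and both leading interior terms of $(M_1\psi,M_2\psi)$ come out negative.

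With the corrected $M_2$, everything you claim holds:
\begin{itemize}
\item your remainder $R\psi=-s\alpha_{xx}\psi$ is right;
\item the boundary contribution is $s\,\partial_\nu\alpha\,|\psi_x|^2\ge 0$, exactly as you argue;
\item the leading terms are $s^3\lambda^4\int\xi^3|\eta_0'|^4\psi^2$ and $3s\lambda^2\int\xi|\eta_0'|^2\psi_x^2$. The factors $3$ and $1$ are swapped relative to what you wrote, which is harmless.
\end{itemize}

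Finally, a technical point about the reduction. For $m>3$ the function $w$ blows up at $t=0$, so it is not a solution with $L^2$ initial datum. Apply the $m=3$ estimate in the form your conjugation argument actually proves. That is: for functions vanishing on the lateral boundary whose conjugate $e^{-s\alpha}w$ and its $x$-derivative vanish at $t=0,T$, justified by a density argument. The same observation is what allows you to drop the time-boundary term $\frac12\big[\int_I\psi_x^2\,dx\big]_{t=0}^{t=T}$ when $z^0$ is only in $L^2(I)$.
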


\subsubsection{Adjoint system and Global Carleman’s inequality }
Now, we are in a position to prove the controllability for linearized system \eqref{lin1}. To achieve this purpose,  we consider the adjoint system for \eqref{lin1}, defined by 
\begin{align} \label{ad1}
		\begin{cases}
			-\varphi_{1,t}-a(0)\varphi_{1,xx}+F_1'(0)\varphi_1+c\varphi_2   = H_1+\alpha_1\psi_1^1\chi_{\omega_{1,d}} +\alpha_2\psi_1^2\chi_{\omega_{2,d}},&\text{ in } Q,\\
			-\varphi_{2,t}-a(0)\varphi_{2,xx}+F_2'(0)\varphi_2  = H_2+\alpha_1\psi_2^1\chi_{\omega_{1,d}} +\alpha_2\psi_2^2\chi_{\omega_{2,d}},&\text{ in } Q,\\
			\psi_{1,t}^1-a(0)\psi_{1,xx}^1+F_1'(0)\psi_1^1=H_3-\frac{1}{\mu_1}\rho_*^{-2}\varphi_1\chi_{\omega_1},&\text{ in } Q, \\
			\psi_{1,t}^2-a(0)\psi_{1,xx}^2+F_1'(0)\psi_1^2=H_4-\frac{1}{\mu_2}\rho_*^{-2}\varphi_1\chi_{\omega_2},&\text{ in } Q,\\
			\psi_{2,t}^1-a(0)\psi_{2,xx}^1+F_2'(0)\psi_2^1+c\psi_1^1=H_5, &\text{ in } Q,\\
			\psi_{2,t}^2-a(0)\psi_{2,xx}^2+F_2'(0)\psi_2^2+c\psi_1^2=H_6, &\text{ in } Q,\\
   			\varphi_1(0,t) = \varphi_1(L,t) =0, \varphi_2(0,t) = \varphi_2(L,t) = 0, &\text{ on } (0,T),\\
			\psi_1^i(0,t) = \psi_1^i(L,t) =0, \psi_2^i(0,t) = \psi_2^i(L,t) = 0,\, i=1,2&\text{ on } (0,T),\\
			\varphi_1(x,T) =\varphi_1^T(x), \, \varphi_2(x,T) =\varphi_2^T(x),&\text{ in } I,\\
			\psi_1^i(x,0) = 0, \, \psi_2^i(x,0) = 0, \, i=1,2,&\text{ in } I,
		\end{cases}		
	\end{align}where $\varphi_1^T, \varphi_2^T \in L^2(I)$ and $H_i \in L^2(Q)$, $i = 1,\ldots, 6$.

    \noindent Taking into account\eqref{ec9}, and if we set $\varrho_i$,  $i=1,2$, given by $\varrho_i:=\alpha_1\psi_i^1 + \alpha_2\psi_i^2$, the system \eqref{ad1} is equivalent to 

    \begin{align} \label{ad2}
		\begin{cases}
			-\varphi_{1,t}-a(0)\varphi_{1,xx}+F_1'(0)\varphi_1+c\varphi_2   =H_1+ \varrho_1\chi_{w_{d}},&\text{ in } Q,\\
			-\varphi_{2,t}-a(0)\varphi_{2,xx}+F_2'(0)\varphi_2  = H_2+ \varrho_2\chi_{\omega_{d}},&\text{ in } Q,\\
			\varrho_{1,t}-a(0)\varrho_{1,xx} + F_1'(0)\varrho_1   =\overline{H}_3 -\rho_*^{-2}(\frac{\alpha_1}{\mu_1}\chi_{\omega_1}+\frac{\alpha_2}{\mu_2}\chi_{\omega_2})\varphi_{1}, &\text{ in } Q,\\ 
			\varrho_{2,t}-a(0)\varrho_{2,xx}+F_2'(0)\varrho_2+ c\varrho_1 = \overline{H}_4, &\text{ in } Q,\\ 	 
			\varphi_1(0,t) = \varphi_1(L,t) =0,\,  \varphi_2(0,t) = \varphi_2(L,t) = 0, &\text{ on } (0,T),\\
			\varrho_1(0,t) = \varrho_1(L,t) =0,\,  \varrho_2(0,t) = \varrho_2(L,t) = 0, &\text{ on } (0,T),\\
			\varphi_1(x,T) =\varphi_1^T(x), \, \varphi_2(x,T) =\varphi_2^T(x),&\text{ in } I,\\
			\varrho_1(x,0) = 0, \,\varrho_2(x,0) = 0, &\text{ in } I.\\
		\end{cases}		
	\end{align}

The following result provides us with a first global Carleman estimate for adjoint system \eqref{ad2}.

\begin{proposition}\label{first_carlmeanpropo111}
    Suppose that  \eqref{condiciones_a4}-\eqref{condiciones_a4c(x,t)} and  \eqref{ec9} are satisfied    and $\mathcal{O} =\omega_d\cap \omega \neq \emptyset$. Then, for an adequate selection of parameters $d_i,\, q_i \in \mathbb{R}$,  and $\mu_i$  large enough, $i=1,2$, there exist two positive constants $C$ and $\hat{\sigma}_3$ such that, for any  $(\varphi_1^T,\varphi_2^T)\in [L^2(I)]^2$, the solution $(\varphi_{1},\varphi_{2},\varrho_{1},\varrho_{2})$  of  system \eqref{ad2} satisfies
    
\begin{equation}\label{carleman2}
    \begin{split}
         	& I(d_1,\varphi_1) + I(d_2,\varphi_2) +I(q_1,\varrho_1) + I(q_2,\varrho_2)\\
         	&	 \leq C\left(\int_{\omega\times (0,T)}\rho^{-2}(s\gamma)^{q_3}\vert \varphi_1 \vert^2dxdt +\int_{Q}\rho^{-2}(s\gamma)^{2q_{2}+8-d_{2}}\vert H_1\vert^2dxdt \right.\\
         	&	  + \int_{Q}\rho^{-2}(s\gamma)^{q_2}\vert H_2\vert^2dxdt+ \int_{Q}\rho^{-\frac{8}{5}}_{*} \vert \overline{H}_3\vert^2  dxdt\\
         	& \left.  +\int_{Q}\rho^{-2}(s\gamma)^{q_2-3} \vert \overline{H}_4\vert^2  dxdt \right),
    \end{split}    
    \end{equation}
    for all  $ \,s\geq s_3 = (T+T^2)\hat{\sigma}_3$\,  with $\hat{\sigma}_{3}$ depending on  $I,\, \omega_{0},\, M_{0},\,  F'_i(0),\,  d_{i}$, $\alpha_{i}$,  and $q_{i}$.
\end{proposition}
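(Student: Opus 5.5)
We apply Lemma \ref{lema carleman 1} to each of the four equations in \eqref{ad2}, with a different power $m$ for each, and then absorb all coupling terms and all local terms except the one in $\varphi_1$. The time reversal $t\mapsto T-t$ does not matter, since $\rho$ and $\gamma$ are symmetric in $t$. Concretely, we take an open set $\omega_0\subset\subset\mathcal{O}=\omega_d\cap\omega$ and apply the lemma as follows:
\begin{itemize}
\item with $m=d_1$ to $\varphi_1$, source $H_1+\varrho_1\chi_{\omega_d}-F_1'(0)\varphi_1-c\varphi_2$;
\item with $m=d_2$ to $\varphi_2$, source $H_2+\varrho_2\chi_{\omega_d}-F_2'(0)\varphi_2$;
\item with $m=q_1$ to $\varrho_1$, source $\overline H_3-\rho_*^{-2}(\frac{\alpha_1}{\mu_1}\chi_{\omega_1}+\frac{\alpha_2}{\mu_2}\chi_{\omega_2})\varphi_1-F_1'(0)\varrho_1$;
\item with $m=q_2$ to $\varrho_2$, source $\overline H_4-F_2'(0)\varrho_2-c\varrho_1$.
\end{itemize}
The zero-order terms $F_i'(0)\cdot$ are absorbed by taking $s$ large, since they carry three fewer powers of $s\gamma$. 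We then fix the hierarchy of exponents so that each coupling term on a right-hand side is dominated by a left-hand side quantity with a strictly larger power. This requires $d_1-3<d_2$ (for $c\varphi_2$), $d_2-3<q_2$ (for $\varrho_2$ in the $\varphi_2$ equation), $q_2-3<q_1$ (for $c\varrho_1$), and $d_1-3<q_1$.

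The term $\rho_*^{-2}\varphi_1$ in the $\varrho_1$ source is handled with $\rho_*^{-4}\le\rho^{-2}\cdot\rho_*^{-2}$ together with $\rho_*^{-2}(s\gamma)^k\le C$. It is bounded by $\frac{C}{\mu_i^2}I(d_1,\varphi_1)$ and absorbed once $\mu_i$ is large; this is where that hypothesis enters. For $\overline H_3$ we use $\rho^{-2}(s\gamma)^{q_1-3}\le C\rho_*^{-8/5}$, which follows from \eqref{pesos-1}.

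After these absorptions the right-hand side still contains the local terms $\int_{\omega_0\times(0,T)}\rho^{-2}(s\gamma)^{m}|\cdot|^2$ for $\varphi_2$, $\varrho_1$ and $\varrho_2$. The local term for $\varphi_1$ is already of the required form with $q_3$. We remove the other three in a cascade, proceeding in this order:
\begin{enumerate}
\item \emph{Local term for $\varrho_2$.} On $\omega_d\supset\omega_0$ we have $\varrho_2=-\varphi_{2,t}-a(0)\varphi_{2,xx}+F_2'(0)\varphi_2-H_2$. We multiply by a cutoff $\zeta\rho^{-2}(s\gamma)^{q_2}\varrho_2$ supported in a slightly larger $\omega_0'\subset\subset\mathcal{O}$ and integrate by parts in $t$ and $x$. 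We use the $\varrho_2$ equation to replace $\varrho_{2,t}-a(0)\varrho_{2,xx}$. Young's inequality then gives
\[
\epsilon\big(I(q_2,\varrho_2)+I(q_1,\varrho_1)\big)+C_\epsilon\int_{\omega_0'\times(0,T)}\rho^{-2}(s\gamma)^{2q_2+8-d_2}|\varphi_2|^2
\]
plus source terms in $H_2$ and $\overline H_4$.
\item \emph{Local term for $\varphi_2$.} On $\omega_0'\subset\omega$ we have $c\ge c_0>0$ by \eqref{condiciones_a4c(x,t)}, so $c_0\varphi_2$ is controlled through the $\varphi_1$ equation: $c\varphi_2=\varphi_{1,t}+a(0)\varphi_{1,xx}-F_1'(0)\varphi_1+H_1+\varrho_1$. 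The same cutoff/integration-by-parts argument, now on $\omega_0''\subset\subset\mathcal{O}$, turns this into a local term in $\varphi_1$ with power $q_3$. It also produces absorbable $\epsilon$-terms and the $H_1$ contribution with weight $(s\gamma)^{2q_2+8-d_2}$, as in \eqref{carleman2}.
\item \emph{Local term for $\varrho_1$.} This is treated by the same device using the $\varphi_1$ equation.
\end{enumerate}

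The main obstacle is the cascade of local terms, specifically step 2. Here the coefficient $c$ is only $L^\infty$ and positive on $\mathcal{O}$, so we cannot differentiate it. To avoid this, we write $\varphi_2=c^{-1}(\cdots)$ only after multiplying by a test function. The derivatives then fall on $\zeta\rho^{-2}(s\gamma)^k\varphi_2$, and we use $|(\rho^{-2}(s\gamma)^k)_t|\le C s\gamma^2\rho^{-2}(s\gamma)^{k}$. Where $\varphi_{2,t}$ or $\varphi_{2,xx}$ appears after integration by parts, we bound it using the $\varphi_2$ equation and the Carleman estimate for $\varphi_2$ with $m=d_2$, which also controls $\rho^{-2}(s\gamma)^{d_2-4}(|\varphi_{2,t}|^2+|\varphi_{2,xx}|^2)$. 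We expect the exponent bookkeeping here to force the specific choice $q_3$, and we will choose it last, once $d_1,d_2,q_1,q_2$ are fixed, so that all intermediate inequalities close for $s\ge s_3=(T+T^2)\hat\sigma_3$.
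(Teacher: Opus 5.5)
Your plan fails at step 2, for a structural reason. On $\mathcal{O}$ the $\varphi_1$-equation only gives the combination $c\varphi_2-\varrho_1=\varphi_{1,t}+a(0)\varphi_{1,xx}-F_1'(0)\varphi_1+H_1$. So when you multiply it by $\zeta\rho^{-2}(s\gamma)^{k}\varphi_2$ to remove the local $\varphi_2$-term, you unavoidably produce the cross term $\int_Q\zeta\rho^{-2}(s\gamma)^{k}\varphi_2\varrho_1$. Here $k\ge q_2+4$ is the power of the local $\varphi_2$-term left by step 1: the terms $(\rho^{-2}(s\gamma)^{q_2})_t\,\varrho_2\varphi_2$ and $(\zeta\rho^{-2}(s\gamma)^{q_2})_x\,\varrho_{2,x}\varphi_2$ already force this, and $q_2+4>d_2$.

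This cross term is not an absorbable $\epsilon$-term. Splitting it by Young against $I(q_1,\varrho_1)$ and $I(d_2,\varphi_2)$ requires $2k<q_1+d_2$, i.e. $q_1>2q_2+8-d_2>d_2+2$. But your step 3 uses the same $\varphi_1$-equation to remove the local $\varrho_1$-term, so it produces $\int_Q\zeta\rho^{-2}(s\gamma)^{q_1}c\,\varphi_2\varrho_1$. That is a new local $\varphi_2$-term of power $q_1$, absorbable into $I(d_2,\varphi_2)$ only if $q_1<d_2$. Otherwise it must be fed back into step 2, giving $k\ge q_1$ and hence $2k>q_1+d_2$. So no choice of $d_i,q_i$ closes your cascade. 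Doing step 3 after step 2 is also circular whenever $q_1\ge d_2$.

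What is missing is an estimate of $\varrho_1$ that does not come from the Carleman inequality. The paper proceeds as follows:
\begin{enumerate}
\item Step 3 removes the local $\varrho_1$- and $\varrho_2$-terms first, with $q_1<q_2+4$, so every resulting local $\varphi_2$-term has power at most $q_2+4$.
\item Step 4 bounds the cross term by $\frac12\int_Q\rho_*^{-8/5}|\varrho_1|^2$ plus a $\varphi_2$-contribution absorbed into $\varepsilon I(d_2,\varphi_2)$, using the comparison between $\alpha$ and $\alpha^*$ (cf.\ \eqref{pesos-1}).
\item Since $\rho_*^{-8/5}\ge\rho^{-2}$, that global term is stronger than $I(q_1,\varrho_1)$ and cannot be absorbed by it. Step 5 controls it by an energy estimate for the forward $\varrho_1$-equation: multiply by $\rho_*^{-8/5}\varrho_1$, use $\varrho_1(0)=0$ and the sign of the term containing $(\rho_*)_t$. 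This gives $\int_Q\rho_*^{-8/5}|\varrho_1|^2\le C(\mu_1^{-2}+\mu_2^{-2})I(d_1,\varphi_1)+C\int_Q\rho_*^{-8/5}|\overline{H}_3|^2$.
\end{enumerate}
This is where the largeness of $\mu_i$ is really needed. Where you invoke it, for the $\rho_*^{-2}\varphi_1$ source in the $\varrho_1$-Carleman estimate, the exponent choice or $s$ large suffices.

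A smaller bookkeeping point: step 1 yields a local $\varphi_2$-power of $q_2+4$ (given $q_2-3<q_1$), not $2q_2+8-d_2$. With your value, step 2 would put the weight $(s\gamma)^{4q_2+16-3d_2}$ on $H_1$, not the $(s\gamma)^{2q_2+8-d_2}$ in \eqref{carleman2}.
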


\begin{proof}
Since $\mathcal{O}\neq \emptyset$, there exists a non-empty open subset $\omega_0 \subset \subset \mathcal{O}$, such that the functions $\eta_0$, $\alpha$ and the  weight function $\rho$, given by  \eqref{imafur},  \eqref{alfa} and  \eqref{pesos}, are well-defined. 

We make the rest of the proof in five steps:

\textbf{Step 1:} Applying inequality \eqref{lema carleman 1} of Lemma \eqref{lema carleman 1} to each $\varphi_i$ and $\varrho_i$, $i=1,2,$ solutions of system \eqref{ad2}, for some different real numbers $d_i$ and $q_i$ to be chosen later, we add such inequalities and obtain the following  estimate:
\begin{equation}\label{pre carleman}
\begin{split}
  &I(d_1,\varphi_1) +I(d_2,\varphi_2)+I(q_1,\varrho_1)+I(q_2,\varrho_2)\\
    &\leq C\left( \sum_{i=1}^2 \int_{\omega_0 \times (0,T)} \rho^{-2}(s\gamma)^{d_i}|\varphi_i|^2 dxdt +\sum_{i=1}^2 \int_{\omega_0 \times (0,T)} \rho^{-2}(s\gamma)^{q_i}|\varrho_i|^2 dxdt \right.\\
   & +\int_Q\rho^{-2}(s\gamma)^{d_1-3}\vert \varrho_1\chi_{\omega_{d}} \vert^2 dxdt+\int_Q\rho^{-2}(s\gamma)^{d_2-3}\vert \varrho_2\chi_{\omega_{d}} \vert^2 dxdt \\
    & +\int_Q\rho^{-2}(s\gamma)^{d_1-3} |c|^2_\infty |\varphi_2|^2dxdt + \int_Q\rho^{-2}(s\gamma)^{q_2-3} |c|^2_\infty |\varrho_1^2dxdt \\
  &+\int_Q\rho^{-2}(s\gamma)^{q_1-3} \rho_*^{-4}\left|\frac{\alpha_1}{\mu_1}\varphi_1\chi_{\omega_1}+ \frac{\alpha_2}{\mu_2}\varphi_1\chi_{\omega_2}\right|^2  dxdt \\
  &+\int_Q\rho^{-2}(s\gamma)^{d_1-3} \vert H_1\vert^2  dxdt +\int_Q\rho^{-2}(s\gamma)^{d_2-3} \vert H_2\vert^2  dxdt \\
  &\left. +\int_Q\rho^{-2}(s\gamma)^{q_1-3} |\overline{H}_3|^2  dxdt +\int_Q\rho^{-2}(s\gamma)^{q_2-3} |\overline{H}_4|^2  dxdt \right),   
\end{split}
\end{equation} for all  $s\geq s_0 := (T+T^2)\hat{\sigma}_{0}$, where $C$ and $\hat{\sigma}_{0}$ are two positive constants depending on  $I$, $\omega_0$, $F_i$, $d_i$ and  $q_i$, $i=1,2$. 

\textbf{Step 2:} Now, we will try to incorporate as many terms from the right side of \eqref{pre carleman} into the terms on the left side as possible. To achieve this, it is necessary to correctly choose the terms $d_i$ and  $q_i$. We can observe that if we establish
\begin{equation*}
    d_1-3<q_1<d_1+3\text{ and }d_2-3<q_2,
\end{equation*}from \eqref{pre carleman}, we deduce that
\begin{equation}\label{pre carleman_222}
\begin{split}
  &I(d_1,\varphi_1) +I(d_2,\varphi_2)+I(q_1,\varrho_1)+I(q_2,\varrho_2)\\
    &\leq C\left( \sum_{i=1}^2 \int_{\omega_0 \times (0,T)} \rho^{-2}(s\gamma)^{d_i}|\varphi_i|^2 dxdt +\sum_{i=1}^2 \int_{\omega_0 \times (0,T)} \rho^{-2}(s\gamma)^{q_i}|\varrho_i|^2 dxdt \right.\\
    & +\int_Q\rho^{-2}(s\gamma)^{d_1-3} |c|^2_\infty |\varphi_2|^2dxdt + \int_Q\rho^{-2}(s\gamma)^{q_2-3} |c|^2_\infty |\varrho_1^2dxdt \\
  &+\int_Q\rho^{-2}(s\gamma)^{d_1-3} \vert H_1\vert^2  dxdt +\int_Q\rho^{-2}(s\gamma)^{d_2-3} \vert H_2\vert^2  dxdt \\
  &\left. +\int_Q\rho^{-2}(s\gamma)^{q_1-3} |\overline{H}_3|^2  dxdt +\int_Q\rho^{-2}(s\gamma)^{q_2-3} |\overline{H}_4|^2  dxdt \right).  
\end{split}
\end{equation}On the other hand, since $\gamma(t)^{-1} \leq \frac{T^2}{4}$ on $(0,T)$, we choose
\begin{equation*}
    d_1-3<d_2 \text{ and } q_2-3<q_1,
\end{equation*} Therefore, for all $ s\geq s_1=(T+T^{2})\hat{\sigma}_{1}$, with positive constants $C$ and $\hat{\sigma}_1$,  depending on $I$, $\omega_0$, $d_i$, $q_i$, $\alpha_i$, $|c|_\infty$ and  $|F'_i(0)|$, from \eqref{pre carleman_222}, we get

\begin{equation}\label{rett}
\begin{split}
  &I(d_1,\varphi_1) +I(d_2,\varphi_2)+I(q_1,\varrho_1)+I(q_2,\varrho_2)\\
    &\leq C\left( \sum_{i=1}^2 \int_{\omega_0 \times (0,T)} \rho^{-2}(s\gamma)^{d_i}|\varphi_i|^2 dxdt +\sum_{i=1}^2 \int_{\omega_0 \times (0,T)} \rho^{-2}(s\gamma)^{q_i}|\varrho_i|^2 dxdt \right.\\
  &+\int_Q\rho^{-2}(s\gamma)^{d_1-3} \vert H_1\vert^2  dxdt +\int_Q\rho^{-2}(s\gamma)^{d_2-3} \vert H_2\vert^2  dxdt \\
  &\left. +\int_Q\rho^{-2}(s\gamma)^{q_1-3} |\overline{H}_3|^2  dxdt +\int_Q\rho^{-2}(s\gamma)^{q_2-3} |\overline{H}_4|^2  dxdt \right).  
\end{split}
\end{equation}

\textbf{Step 3:}  In this step, let us eliminate the terms of $\varrho_i$, $i=1,2$, of the above estimate. Given a set $\mathcal{O}_1$ such that $\omega_0 \subset \subset \mathcal{O}_1 \subset \subset \mathcal{O}$, we consider a function $\xi \in C^\infty (\mathbb{R})$ that verifies:
\begin{equation*}
    \begin{cases}
        0\leq \xi \leq 1, &\text{ if }x \in\mathbb{R}, \\
        \xi = 1, &\text{ if } x \in \omega_0, \\
        \text{supp}(\xi) \subset \mathcal{O}_1,
    \end{cases}
\end{equation*} and
\begin{equation*}
    \frac{\xi_{xx}}{\xi^\frac{1}{2}}, \, \, 
    \frac{\xi_x}{\xi^\frac{1}{2}}\in L^\infty(I).
\end{equation*}Moreover, we set $u_i: = \rho^{-2}(s\gamma)^{q_i}$, $i=1,2$. Thus, if we multiply the first equation in \eqref{ad2} by $u_1\xi\varrho_1$, the second one  by $u_2\xi\varrho_2$ and integrate on $Q$, it follows that
\begin{align*}
\sum_{i=1}^{2}\int_{\omega_0\times (0,T)} \rho^{-2} (s \gamma)^{q_i} |\varrho_i|^2\,dxdt&\leq \sum_{i=1}^{2}\int_{Q} \rho^{-2} (s \gamma)^{q_i} |\varrho_i|^2\chi_{\omega_{d}}\,dxdt \\
&  = \int_{Q} u_1\xi\rho_1(-\varphi_{1,t}-a(0)\varphi_{1,xx}+F_1'(0)\varphi_1+c\varphi_2 )  dxdt\\
&+\int_{Q}u_2\xi\rho_2(	-\varphi_{2,t}-a(0)\varphi_{2,xx}+F_2'(0)\varphi_2) dxdt\\
& -\int_{Q}u_1\xi\varrho_1H_1dxdt-\int_{Q}u_2\xi\varrho_2H_2dxdt.
\end{align*}Consequently, by integrating the right-hand side of the previous inequality by parts, using $\eqref{ad2}_3$ and $\eqref{ad2}_4$, and applying Young's inequality,  we have that
\begin{eqnarray*}
&& \sum_{i=1}^{2}\int_{\omega_0\times (0,T)} \rho^{-2} ( s \gamma)^{q_i} |\varrho_i|^2\\
&&\leq \varepsilon  \left(I(q_1,\varrho_1) +  I(q_2,\varrho_2)\right)+C_{\varepsilon}\int_{\mathcal{O}_{1}\times(0,T)}\rho^{-2}(s\gamma)^{q_{1}}|\varphi_{2}|^{2}\,dxdt\\
&&
 +C_{\varepsilon}\left( \int_{Q}\rho^{-2}(s\gamma)^{q_{1}-3}|\overline{H}_{3}|^{2}\,dxdt+\int_{Q}\rho^{-2}(s\gamma)^{q_{2}-3}|\overline{H}_{4}|^{2}\,dxdt  \right)     \\
&& + C_{\varepsilon} \left(\int_{\mathcal{O}_1\times (0,T)}\rho^{-2}(s\gamma)^{q_1+4}\vert \varphi_1 \vert^2dxdt+\int_{\mathcal{O}_1\times (0,T)}\rho^{-2}(s\gamma)^{q_2+4}\vert \varphi_2 \vert^2dxdt\right)\\
&& + C_{\varepsilon}\left(\int_{\mathcal{O}_1\times (0,T)}\rho^{-2}(s\gamma)^{q_1}\vert H_1\vert^2dxdt+\int_{\mathcal{O}_1\times (0,T)}\rho^{-2}(s\gamma)^{q_2}\vert H_2\vert^2dxdt\right),
\end{eqnarray*}for any $\varepsilon>0$, and some positive constant $C_{\varepsilon}$.

\noindent Furthermore, choosing $q_{1}<q_{2}+4$, from estimative above, we obtain
\begin{equation}\label{desiepaucaux}
    \begin{split}
       & \sum_{i=1}^{2}\int_{\omega_0\times (0,T)} \rho^{-2} \left( s \gamma)^{q_i} |\varrho_i|^2 \leq  \varepsilon  (I(q_1,\varrho_1) +  I(q_2,\varrho_2)\right) \\
	& + C_{\varepsilon} \left(\int_{\mathcal{O}_1\times (0,T)}\rho^{-2}(s\gamma)^{q_1+4}\vert \varphi_1 \vert^2dxdt+\int_{\mathcal{O}_1\times (0,T)}\rho^{-2}(s\gamma)^{q_2+4}\vert \varphi_2 \vert^2dxdt\right)\\
	&
	+ C_{\varepsilon}\left( \int_{Q}\rho^{-2}(s\gamma)^{q_{1}-3}|\overline{H}_{3}|^{2}\,dxdt+\int_{Q}\rho^{-2}(s\gamma)^{q_{2}-3}|\overline{H}_{4}|^{2}\,dxdt  \right)   \\
	& + C_{\varepsilon}\left(\int_{\mathcal{O}_1\times (0,T)}\rho^{-2}(s\gamma)^{q_1}\vert H_1\vert^2dxdt+\int_{\mathcal{O}_1\times (0,T)}\rho^{-2}(s\gamma)^{q_2}\vert H_2\vert^2dxdt\right). 
    \end{split}
\end{equation}Then, from \eqref{rett} and \eqref{desiepaucaux}, we deduce that
\begin{equation}\label{rett_22}
\begin{split}
  &I(d_1,\varphi_1) +I(d_2,\varphi_2)+I(q_1,\varrho_1)+I(q_2,\varrho_2)\\
    &\leq C\left( \int_{\mathcal{O}_1\times (0,T)}\rho^{-2}(s\gamma)^{q_1+4}\vert \varphi_1 \vert^2dxdt+\int_{\mathcal{O}_1\times (0,T)}\rho^{-2}(s\gamma)^{q_2+4}\vert \varphi_2 \vert^2dxdt      \right.\\
  &+\int_Q\rho^{-2}(s\gamma)^{d_1-3} \vert H_1\vert^2  dxdt +\int_Q\rho^{-2}(s\gamma)^{d_2-3} \vert H_2\vert^2  dxdt \\
  &\left. +\int_Q\rho^{-2}(s\gamma)^{q_1-3} |\overline{H}_3|^2  dxdt +\int_Q\rho^{-2}(s\gamma)^{q_2-3} |\overline{H}_4|^2  dxdt \right).  
\end{split}
\end{equation}for all $s\geq s_2 = (T+T^2)\hat{\sigma}_{2}$ where $\hat{\sigma}_{2}$ depends on the terms $I$, $\omega_0$, $d_i$,  $F'_{i}(0)$,  $\alpha_{i}$, $M_{0}$,  and  $q_i$.

\textbf{Step 4:} In this step,  following the techniques from step $3$, we will estimate the term $$\int_{\mathcal{O}_1\times (0,T)}\rho^{-2}(s\gamma)^{q_2+4}\vert \varphi_2 \vert^2dxdt,$$ in inequality \eqref{rett_22}. Indeed, let us appropriately select the parameter $s$ to delete this term. Consider a new set $\mathcal{O}_2$ such that $\mathcal{O}_1 \subset \subset\mathcal{O}_2 \subset \subset \mathcal{O} $,  we consider a function $\hat{\xi} \in C^\infty (\mathbb{R})$ verifying:
\begin{equation*}
	\begin{cases}
		0\leq \hat{\xi} \leq 1, \text{ if }x \in\mathbb{R}, \\
		\hat{\xi} = 1 ,\text{ if }x \in \mathcal{O}_1, \\
         \text{supp}(  \hat{\xi}) \subset \mathcal{O}_{2},
	\end{cases}
\end{equation*} and 
\begin{equation*}
	\frac{\hat{\xi}_{xx}}{\hat{\xi}^\frac{1}{2}}, \, 
	\frac{\hat{\xi}_x}{\hat{\xi}^\frac{1}{2}}\in L^\infty(I).
\end{equation*}We define $u: = \rho^{-2} (s\gamma)^{q_2+4}$.  Then, if   we multiply $\eqref{ad2}_1$ by $u\hat{\xi} \varphi_2$, integrate on $Q$ and using \eqref{condiciones_a4c(x,t)},  it follows that
\begin{equation*}
\begin{split}
c_0\int_{\mathcal{O}_{2}\times (0,T)} \rho^{-2} (s \gamma)^{q_2+4} |\varphi_2|^2dxdt &\leq \int_Q u\hat{\xi}c\vert \varphi_2 \vert^2dxdt\\
&=\int_Q u\hat{\xi} \varphi_2(\varphi_{1,t}\,dxdt+a(0)\varphi_{1,xx}-F_1'(0)\varphi_1 )\,dxdt\\
   & +\int_{Q} u\hat{\xi} \varphi_2( H_1+\varrho_1\chi_{\omega_{d}})\,dxdt.
\end{split}
\end{equation*}Once again, by integrating the right-hand side of the above inequality by parts, using $\eqref{ad2}_2$,  applying Young's inequality  and using  the  estimates \eqref{pesos-1},   we get, for any $\varepsilon>0$, the existence of a positive constant $C_{\varepsilon}$, such that
\begin{equation}\label{rrett_3}
    \begin{split}
    &c_0\int_{\mathcal{O}_1\times (0,T)} \rho^{-2} (s \gamma)^{q_2+4} |\varphi_2|^2 \\
     &\leq \varepsilon \left( I(d_1,\varphi_1)+I(d_2,\varphi_2)+I(q_1,\varrho_1)+I(q_2,\varrho_2)\right)\\
&+C_{\varepsilon}\int_{\mathcal{O}_2\times(0,T)} \rho^{-2} (s \gamma)^{q_3} |\varphi_1|^2 dxdt+\frac{1}{2}\int_{Q}\rho^{-\frac{8}{5}}_{*}|\varrho_1|^{2}\,dxdt\\
&+C_{\varepsilon}\int_{Q}e^{-2s\alpha}(s\gamma)^{2q_{2}+8-d_{2}}|H_{1}|^{2}\,dxdt+C_{\varepsilon}\int_{Q}e^{-2s\sigma}(s\gamma)^{q_2}|H_2|^{2}\,dxdt,
    \end{split}
\end{equation}for all $s\geq s_3 = (T+T^2)\hat{\sigma}_{3}$ where $\hat{\sigma}_{3}$ depends on the terms $I$, $\omega_0$, $d_i$,  $q_i$,  $M_{0}$, $\alpha_{i}$,  and  $F'_{i}(0)$, and  $q_{3}=\max\{4q_{2}+24-2d_{2}-d_{1};2q_{2}+12-d_{2}\}$. 

\noindent Thus, from \eqref{rett_22} and \eqref{rrett_3}, for all $s\geq s_3$, the following estimate is achieved:
\begin{equation}\label{step4-1}
\begin{split}
& I(d_1,\varphi_1) + I(d_2,\varphi_2) +I(q_1,\varrho_1) + I(q_2,\varrho_2)\\
&\leq C\left(\int_{\mathcal{O}_2\times (0,T)}\rho^{-2}(s\gamma)^{q_3}\vert \varphi_1 \vert^2dxdt  +\int_Q \rho^{-\frac{8}{5}}_{*}  |\varrho_1|^2dxdt \right. \\
&+\int_{Q}\rho^{-2}(s\gamma)^{2q_{2}+8-d_{2}}\vert H_1\vert^2dxdt + \int_{Q}\rho^{-2}(s\gamma)^{q_2}\vert H_2\vert^2dxdt\\
&\left. + \int_{Q}\rho^{-2}(s\gamma)^{q_1-3} \vert \overline{H}_3\vert^2  dxdt +\int_{Q}\rho^{-2}(s\gamma)^{q_2-3} \vert \overline{H}_4\vert^2  dxdt \right).
\end{split}
\end{equation}

\textbf{Step 5:} Finally, in \eqref{step4-1}, we will estimate the term
\begin{equation*}
\int_{Q}\rho^{-\frac{8}{5}}_{*}|\varrho_{1}|^{2}\,dxdt.
\end{equation*}To achieve this purpose, multiplying   $\eqref{ad2}_{3}$  by  $\rho^{-\frac{8}{5}}_{*}\varrho_{1}$, integrating on $I$, using \eqref{condiciones_a4}, \eqref{condiciones_a4f} and Young's inequality, we have that
\begin{equation*}
    \begin{split}
       \frac{1}{2}\frac{d}{dt}\left(\int_{I}\rho^{-\frac{8}{5}}_{*}|\varrho_{1}|^{2}\,dx \right)+M_{0}\int_{I}\rho^{-\frac{8}{5}}_{*}|\varrho_{1,x}|^{2}\,dx &\leq C \left(\int_{I}-\frac{8}{5}\rho^{-\frac{8}{5}-1}_{*}\left( \rho_{*}\right)_{t}|\varrho_{1}|^{2}\,dx+\int_{I}\rho^{-\frac{8}{5}}_{*}|\overline{H}_{3}|^{2}\,dx \right)\\
       &+C\left(\frac{1}{\mu^{2}_{1}}+\frac{1}{\mu^{2}_{2}}\right)\int_{I}\rho^{-4}_{\ast}\rho^{-\frac{8}{5}}_{*}|\varphi_{1}|^{2}\,dx.
    \end{split}
\end{equation*}Since
\begin{align*}
   -\frac{8}{5}\rho^{-\frac{8}{5}-1}_{*}\left( \rho_{*}\right)_{t}\leq 0, \, \, \, \text{and}\, \, \, \rho^{-4}_{\ast}\rho^{-\frac{8}{5}}_{*}\leq e^{-2s\alpha}(s\gamma)^{d_1}, 
\end{align*}integrating the previous inequality on $(0, T)$, yields
\begin{align}\label{prepre_finafin}
\int_{Q}\rho^{-\frac{8}{5}}_{*}|\varrho_1|^{2}\,dxdt \leq C\left(\frac{1}{\mu^{2}_{1}}+\frac{1}{\mu^{2}_{2}} \right)I(d_{1},\varphi_{1})+  C\int_{Q}\rho^{-\frac{8}{5}}_{*}|\overline{H}_{3}|^{2}\,dxdt.
\end{align}Furthermore, for $\mu_{1}$  and  $\mu_{2}$  sufficiently large, from \eqref{step4-1} and \eqref{prepre_finafin}, we obtain \eqref{carleman2}, for all $s\geq s_3$. This concludes the proof of Theorem. 
    
\end{proof}

 Now, using the previous Proposition, we will prove a Carleman inequality for the adjoint system \eqref{ad2}.

\begin{theorem}\label{Carleman-Theorem-2} 
Suppose that  \eqref{condiciones_a4}-\eqref{condiciones_a4c(x,t)} and  \eqref{ec9} are satisfied and   $\mathcal{O} =\omega_d\cap \omega \neq \emptyset$. Then, for $\mu_i, i=1,2$  large enough, there exists  a positive constant $C$, such that, for every $(\varphi_1^T,\varphi_2^T) \in (L^2(I))^2$, the  solution  $(\varphi_{1},\varphi_{2},\psi^{1}_{1},\psi^{2}_{1},\psi^{1}_{2},\psi^{2}_{2})$  of  system \eqref{ad1} satisfies
\begin{equation}\label{car_for_ad_ptipipri}
    \begin{split}
       & \int_{Q}\tilde{\rho}^{-2}_{*}(s\tilde{\gamma})^{d_1}|\varphi_1|^{2}\,dxdt+\int_{Q}\tilde{\rho}^{-2}_{*}(s\tilde{\gamma})^{d_2}|\varphi_2|^{2}\,dxdt\\
       &+\sum_{i=1}^{2}\int_{I}|\varphi_{i}(0)|^{2}\,dx+\sum_{i,j=1}^{2}\int_{Q}\tilde{\rho}^{-2}_{*}|\psi^{i}_{j}|^{2}\,dxdt\\
       &\leq C\left(\int_{\omega\times (0,T)}\tilde{\rho}^{-\frac{8}{5}}_{*}(s\tilde{\gamma})^{q_3}\vert \varphi_1 \vert^2dxdt +\int_{Q}\tilde{\rho}^{-\frac{8}{5}}_{*}\vert H_1\vert^2dxdt + \int_{Q}\tilde{\rho}^{-\frac{8}{5}}_{*}\vert H_2\vert^2dxdt\right.\\
       &	  + \int_{Q}\tilde{\rho}^{-\frac{8}{5}}_{*}\vert H_3\vert^2dxdt +\int_{Q}\tilde{\rho}^{-\frac{8}{5}}_{*} \vert H_{4}\vert^2  dxdt\\
       & \left.+\int_{Q}\tilde{\rho}^{-\frac{8}{5}}_{*} \vert H_{5}\vert^2  dxdt+\int_{Q}\tilde{\rho}^{-\frac{8}{5}}_{*} \vert H_{6}\vert^2  dxdt\right),
    \end{split}
\end{equation}for all $s\geq s_{3}=(T+T^{2})\hat{\sigma}_{3}$ with $\hat{\sigma}_{3}$ depends on  $I,\, \omega_{0},\, M_{0},\,  F'_i(0),\,  d_{i}$, $\alpha_{i}$,  and $q_{i}$.
\end{theorem}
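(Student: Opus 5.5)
The argument is the standard passage from a Carleman estimate with weights vanishing at both $t=0$ and $t=T$ to one whose weights vanish only at $t=T$, as in the work of Fernández-Cara et al. We start from Proposition \ref{first_carlmeanpropo111} applied to \eqref{ad2}, with $\varrho_i=\alpha_1\psi_i^1+\alpha_2\psi_i^2$ and $\overline H_3,\overline H_4$ the corresponding combinations of $H_3,\dots,H_6$. We split $(0,T)$ into $[T/2,T]$ and $[0,T/2]$.

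\emph{Interval $[T/2,T]$.} Here $\ell(t)=t(T-t)$, so $\tilde\alpha=\alpha$, $\tilde\gamma=\gamma$ and $\tilde\rho=\rho$. Using \eqref{pesos-1} we bound $\rho^{-2}\ge \tilde\rho_*^{-2}$ from below on the left of \eqref{carleman2}. On the right we use $\rho^{-2}(s\gamma)^{k}\le C\tilde\rho_*^{-8/5}$, which holds because $\rho^{-2}\le \tilde\rho_*^{-8/5}$ by the $4/5$ comparison, and any polynomial power of $s\gamma$ is absorbed by the leftover gap $e^{-2s\alpha+\frac85 s\alpha^*}\le e^{-\frac{2}{5}s\alpha^\circ}$ (again from \eqref{pesos-1}). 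This gives the weighted estimates for $\varphi_1,\varphi_2,\varrho_1,\varrho_2$ on $I\times(T/2,T)$.

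\emph{Interval $[0,T/2]$.} There $\tilde\rho_*$ and $\tilde\gamma$ are bounded above and below by positive constants, so it is enough to bound unweighted $L^2$ norms.
\begin{enumerate}
\item For $\varphi$, which is backward in time, take a cutoff $\eta\in C^1([0,T])$ with $\eta=1$ on $[0,T/2]$ and $\eta=0$ on $[3T/4,T]$. Standard energy estimates for $(\eta\varphi_1,\eta\varphi_2)$ bound $\sum_i\|\varphi_i(0)\|^2_{L^2(I)}$ and $\|\varphi_i\|^2_{L^2(I\times(0,T/2))}$ by three terms: $\|\varphi_i\|^2_{L^2(I\times(T/2,3T/4))}$, which the first step already controls since the weights are bounded below there; $\|\varrho_i\|^2_{L^2(I\times(0,3T/4))}$; and $\|H_i\|^2$.
\item For $\varrho_1,\varrho_2$, which are forward in time with zero initial data, energy estimates on $(0,3T/4)$ bound them by $\|\overline H_{3,4}\|^2$ plus $\mu_i^{-2}\|\rho_*^{-2}\varphi_1\|^2_{L^2(I\times(0,3T/4))}$.
\end{enumerate}
The last term is the coupling that needs care, since it feeds $\varphi_1$ back into the bound for $\varrho$. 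We close the loop as follows. On $(T/2,3T/4)$ the coupling term is already bounded by the first step. On $(0,T/2)$ we absorb it into the left-hand side, using $\mu_i$ large together with the fact that $\rho_*^{-2}$ is bounded.

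The final step recovers the $\psi_j^i$ individually rather than only through the combinations $\varrho_i$. Each $\psi_j^i$ solves a forward parabolic equation with zero initial data, forced by $\rho_*^{-2}\varphi_1$, by $c\psi_1^i$ and by $H_k$. We multiply by $\tilde\rho_*^{-2}\psi^i_j$, whose weight is nonincreasing in $t$ so the time-derivative term has a favourable sign, and integrate. This bounds $\int_Q\tilde\rho_*^{-2}|\psi^i_j|^2$ by $\mu_i^{-2}\int_Q\tilde\rho_*^{-2}\rho_*^{-4}|\varphi_1|^2$ and the $H_k$ terms, and the $\varphi_1$ integral is in turn controlled by the left-hand side. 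Finally we replace the observation weight $\rho^{-2}(s\gamma)^{q_3}$ on $\omega$ by $\tilde\rho_*^{-8/5}(s\tilde\gamma)^{q_3}$, using the same comparison as in the first step near $T$ and boundedness near $0$. Collecting all terms yields \eqref{car_for_ad_ptipipri}.

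The step I expect to be the main obstacle is the exponent bookkeeping: checking that the right-hand side weight $\rho^{-2}(s\gamma)^{2q_2+8-d_2}$ on $H_1$, and similar ones, are genuinely dominated by $\tilde\rho_*^{-8/5}$. This depends on the $4/5$ versus $5/4$ comparisons in \eqref{pesos-1} and on $s$ being large.
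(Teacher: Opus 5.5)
Your proposal is correct and follows essentially the same route as the paper. You split at $T/2$ and use Proposition~\ref{first_carlmeanpropo111} with weight comparisons on $(T/2,T)$. Near $t=0$ you use unweighted energy estimates for the backward $\varphi_i$ and the forward $\varrho_i$; where you use a cutoff, the paper uses an $e^{2Ct}$ multiplier and averaging in time, which is equivalent. Finally, you use a $\tilde\rho_*^{-2}$-weighted energy estimate for each $\psi^i_j$, relying on this weight being nonincreasing in $t$ and on $\mu_i$ being large.

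One small correction to your exponent bookkeeping. The $\frac45$ comparison in \eqref{pesos-1} only gives $-2\alpha+\frac85\alpha^*\le 0$, not $\le -\frac25\alpha^\circ$. The strict gap comes from the constant ratio $\alpha^*/\alpha^\circ=\beta_2/\beta_1=1+e^{-\lambda|\eta_0|_\infty}<\frac54$. This yields $-2\alpha+\frac85\alpha^*\le -c\,\alpha^\circ$ with $c=2-\frac85\beta_2/\beta_1>0$, which is enough to absorb every power of $s\gamma$.
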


\begin{proof}
In order to  prove this theorem, we will follow some steps.

\textbf{Step1:} In this  step,  we obtain  the Carleman inequality for the system \eqref{ad2}  with weights $\displaystyle \tilde{\rho}^{-2}(s\tilde{\gamma})^{k} $ and  $\tilde{\rho}^{-\frac{8}{5}}_{*},\, k\in \mathbb{Z}$.

\noindent We  are going  to decompose  the  integrals  $\tilde{I}(d_{i},\varphi_{i})$ and  $\tilde{I}(q_{i},\varrho_{i})$, $i=1, 2$, in this  form:
$$
\int_{Q}=\int_{I\times(0,T/2)}+\int_{I\times(T/2,T)},
$$where   the integrals of $\tilde{I}(d_{i},\varphi_{i})$  in $I\times(0,T/2)$  will be  denoted by  $\tilde{I}_{1}(d_{i},\varphi_{i})$, and  the  integrals in $I\times(T/2,T)$  will  be  denoted by $\tilde{I}_{2}(d_{i},\varphi_{i})$. Similarly, with  the term   $\tilde{I}(q_{i},\varrho_{i})$, we have the terms $\tilde{I}_{1}(q_{i},\varrho_{i})$ and  $\tilde{I}_{2}(q_{i},\varrho_{i})$, $i=1, 2$.

\noindent Since $\displaystyle \tilde{\rho}^{-2}(s\tilde{\gamma})^{k} = \displaystyle {\rho}^{-2}(s{\gamma})^{k}, $ in $I \times(T/2, T)$, we have 
\begin{align}\label{liiss}
    &\tilde{I}_{2}(d_{1},\varphi_1)+\tilde{I}_{2}(d_{2},\varphi_{2})+\tilde{I}_{2}(q_{1},\varrho_1)+\tilde{I}_{2}(q_{2},\varrho_{2}) \\
    &\leq I(d_{1},\varphi_1)+I(d_{2},\varphi_{2})+I(q_{1},\varrho_1)+I(q_{2},\varrho_{2}). \nonumber
\end{align}Thus, from \eqref{liiss}, \eqref{carleman2} and using the properties of  functions $\alpha,\, \gamma,\, \tilde{\alpha}$, and  $\tilde{\gamma}$, we  deduce  that
\begin{equation}\label{step6-c-1}
\begin{split}
& \tilde{I}_{2}(d_{1},\varphi_1)+\tilde{I}_{2}(d_{2},\varphi_{2})+\tilde{I}_{2}(q_{1},\varrho_1)+\tilde{I}_{2}(q_{2},\varrho_{2})\\
&\leq C\left(\int_{\omega\times (0,T)}\tilde{\rho}^{-2}(s\tilde{\gamma})^{q_3}\vert \varphi_1 \vert^2dxdt +\int_{Q}\tilde{\rho}^{-2}(s\tilde{\gamma})^{2q_{2}+8-d_{2}}\vert H_1\vert^2dxdt \right.\\
&+ \int_{Q}\tilde{\rho}^{-2}(s\tilde{\gamma})^{q_2}\vert H_2\vert^2dxdt+ \int_{Q}\tilde{\rho}^{-\frac{8}{5}}_{*} \vert \overline{H}_3\vert^2  dxdt\\
& \left.  +\int_{Q}\tilde{\rho}^{-2}(s\tilde{\gamma})^{q_2-3} \vert \overline{H}_4\vert^2  dxdt \right).
\end{split}
\end{equation}Now, we estimate  the  terms $$\tilde{I}_{1}(d_{1},\varphi_1)+\tilde{I}_{1}(d_{2},\varphi_{2})+\tilde{I}_{1}(q_{1},\varrho_1)+\tilde{I}_{1}(q_{2},\varrho_{2}).$$ To achieve this objective, multiplying   $\eqref{ad2}_{1}$  by $\varphi_1$,   integrating  over $I$ and using the conditions \eqref{condiciones_a4} and \eqref{condiciones_a4f},  we result
\begin{align}\label{step6-c-2}
&-\frac{1}{2}\frac{d}{dt}\left(\int_{I}|\varphi_{1}|^{2}\,dx\right)+M_{0}\int_{I}|\varphi_{1,x}|^{2}\,dx\\
&\leq C\left(\int_{I}|\varphi_{1}|^{2}\,dx+ \int_{I}|\varphi_{2}|^{2}\,dx+\int_{I}|H_1|^{2}\,dx+\int_{I}|\varrho_{1}|^{2}\,dx\right). \nonumber
\end{align}Multiplying \eqref{step6-c-2} by  $e^{2Ct}$ and integrating on  $(\tilde{r},r)$,  with  $0 \leq \tilde{r} \leq \frac{T}{2}\leq r \leq \frac{3T}{4}$, we have
\begin{equation*}
    \int_{I}|\varphi_{1}(\tilde{r})|^{2}\,dx \leq C\int_{I}|\varphi_{1}(r)|^{2}\,dx+ C\int_{0}^{r}\left(\int_{I}|\varphi_{2}|^{2}\,dx+\int_{I}|H_1|^{2}\,dx+\int_{I}|\varrho_{1}|^{2}\,dx   \right)\,ds.
\end{equation*}Moreover, integrating the above estimate, firstly, $\tilde{r}$  on  $(0,T/2)$ and, secondly,  $r$  on  $(T/2,3T/4)$, it follows that 
\begin{align}\label{step6-c-3}
    & \int_{I\times(0,T/2)}|\varphi_{1}|^{2}\,dxdt\\
    &	 \leq  C\left(\int_{I\times(T/2,3T/4)}|\varphi_{1}|^{2}\,dxdt+\int_{I\times(0,3T/4)}(|H_1|^{2}+|\varrho_1|^{2}+|\varphi_2|^{2})\,dxdt \right).\nonumber
\end{align}On the other hand, integrating  \eqref{step6-c-2} on  $(0,r)$,  with  $\frac{T}{2}\leq r \leq   \frac{3T}{4}$,  and  using  \eqref{step6-c-3}, we obtain that
\begin{align}\label{step6-c-4}
&\int_{I\times(0,T/2)}|\varphi_{1,x}|^{2}\,dxdt	\leq C \int_{I\times(0,3T/4)}(|\varphi_1|^{2}+|H_1|^{2}+|\varrho_1|^{2}+|\varphi_2|^{2})\,dxdt.
\end{align}Hence, from \eqref{step6-c-3} and \eqref{step6-c-4}, we obtain that there exists a constant $C>0$, such that
\begin{align}\label{step6-c-5}
 & \int_{I\times(0,T/2)}(|\varphi_{1}|^{2}+|\varphi_{1,x}|^{2})\,dxdt	\\
  &\leq  C\left(\int_{I\times(T/2,3T/4)}|\varphi_{1}|^{2}\,dxdt+\int_{I\times(0,3T/4)}(|\varphi_{2}|^{2}+|H_1|^{2}+|\varrho_{1}|^{2})\,dxdt \right).\nonumber  
\end{align}Performing similar computations for the term 
$$
\int_{I\times(0,T/2)}(|\varphi_{2}|^{2}+|\varphi_{2,x}|^{2}+|\varrho_{i}|^{2}+|\varrho_{i,x}|^{2} )\,dxdt,
$$and joint with \eqref{step6-c-5}, we get
\begin{equation}\label{aux_23aunaun}
\begin{split}
&  \sum_{i=1}^{2}\int_{I\times(0,T/2)}(|\varphi_{i}|^{2}+|\varphi_{i,x}|^{2}+|\varrho_{i}|^{2}+|\varrho_{i,x}|^{2})\,dxdt\\
&\leq C\int_{I\times(T/2,3T/4)}(|\varphi_1|^{2}+|\varrho_{1}|^{2}+|\varrho_2|^{2})\,dxdt\\
&+ C\int_{I\times(0,3T/4)}(|H_1|^{2}+|H_{2}|^{2}+|\overline{H}_{3}|^{2}+|\overline{H}_{4}|^{2} )\,dxdt.
\end{split}
\end{equation}Since
\begin{equation}\label{new_limi_rhos}
\begin{cases} 
 		\displaystyle \tilde{\rho}^{-2}(s\tilde{\gamma})^{k} \asymp C, &\text{ in } \, I \times(0, T/2),\\
        \\
            \displaystyle \tilde{\rho}^{-2}(s\tilde{\gamma})^{k} = \displaystyle {\rho}^{-2}(s{\gamma})^{k}, &\text{ in } I \times(T/2, T),
 	      \end{cases}
    \end{equation}from \eqref{aux_23aunaun} and \eqref{new_limi_rhos}, we conclude that
\begin{equation}\label{s_aux_forr}
\begin{split}
&\tilde{I}_{1}(d_{1},\varphi_1)+\tilde{I}_{1}(d_{2},\varphi_{2})+\tilde{I}_{1}(q_{1},\varrho_1)+\tilde{I}_{1}(q_{2},\varrho_{2})\\
&\leq C\left(\int_{\omega\times (0,T)}\tilde{\rho}^{-2}(s\tilde{\gamma})^{q_3}\vert \varphi_1 \vert^2dxdt +\int_{Q}\tilde{\rho}^{-2}(s\tilde{\gamma})^{2q_{2}+8-d_{2}}\vert H_1\vert^2dxdt \right.\\
&+ \int_{Q}\tilde{\rho}^{-2}(s\tilde{\gamma})^{q_2}\vert H_2\vert^2dxdt+ \int_{Q}\tilde{\rho}^{-\frac{8}{5}}_{*}\vert \overline{H}_3\vert^2  dxdt\\
& \left.  +\int_{Q}\tilde{\rho}^{-2}(s\tilde{\gamma})^{q_2-3} \vert \overline{H}_4\vert^2  dxdt \right).
\end{split}
 \end{equation}Consequently, from \eqref{step6-c-1} and \eqref{s_aux_forr}, we deduce the following estimate
 \begin{equation}\label{step6-c-6}
\begin{split}
& \tilde{I}(d_{1},\varphi_1)+\tilde{I}(d_{2},\varphi_{2})+\tilde{I}(q_{1},\varrho_1)+\tilde{I}(q_{2},\varrho_{2})\\
&\leq C\left(\int_{\omega\times (0,T)}\tilde{\rho}^{-2}(s\tilde{\gamma})^{q_3}\vert \varphi_1 \vert^2dxdt +\int_{Q}\tilde{\rho}^{-2}(s\tilde{\gamma})^{2q_{2}+8-d_{2}}\vert H_1\vert^2dxdt \right.\\
&	  + \int_{Q}\tilde{\rho}^{-2}(s\tilde{\gamma})^{q_2}\vert H_2\vert^2dxdt+ \int_{Q}\tilde{\rho}^{-\frac{8}{5}}_{*} \vert \overline{H}_3\vert^2  dxdt\\
& \left.  +\int_{Q}\tilde{\rho}^{-2}(s\tilde{\gamma})^{q_2-3} \vert \overline{H}_4\vert^2  dxdt \right).
\end{split}
 \end{equation}

 \textbf{Step 2:} Now, we obtain  the  observability inequality  for the system \eqref{ad2}. 

 \noindent Indeed, first, let us integrate  over $(0,t)$  in \eqref{step6-c-2},  with   $\frac{T}{2}\leq t \leq \frac{3T}{4}$, and  after integrating  with respect to $t$  over $(T/2,3T/4)$, we deduce  that 
 \begin{align}\label{varp_cero_1}
 \int_{I}|\varphi_{1}(0)|^{2}\,dx  \leq C \int_{I\times(T/2,3T/4)}|\varphi_1|^{2}\,dxdt+C\int_{I\times(0,3T/4)}(|H_{1}|^{2} +  |\varphi_2|^{2}+|\varrho_1|^{2})\,dxdt.    
 \end{align}A similar calculations shows that
 \begin{align}\label{varp_cero_2}
 \int_{I}|\varphi_{2}(0)|^{2}\,dx \leq C \int_{I\times(T/2,3T/4)}|\varphi_2|^{2}\,dxdt+C\int_{I\times(0,3T/4)}(|H_{2}|^{2} +  |\varphi_2|^{2})\,dxdt.    
 \end{align}Therefore, from \eqref{new_limi_rhos}, \eqref{varp_cero_1} and \eqref{varp_cero_2}, we obtain that
 \begin{equation}\label{step6-c-7}
  \begin{split}
& \int_{I}|\varphi_{1}(0)|^{2}\,dx+\int_{I}|\varphi_{2}(0)|^{2}\,dx\\
&\leq C\left(\int_{\omega\times (0,T)}\tilde{\rho}^{-2}(s\tilde{\gamma})^{q_3}\vert \varphi_1 \vert^2dxdt +\int_{Q}\tilde{\rho}^{-2}(s\tilde{\gamma})^{2q_{2}+8-d_{2}}\vert H_1\vert^2dxdt \right.\\
&	  + \int_{Q}\tilde{\rho}^{-2}(s\tilde{\gamma})^{q_2}\vert H_2\vert^2dxdt+ \int_{Q}\tilde{\rho}^{-\frac{8}{5}}_{*} \vert \overline{H}_3\vert^2  dxdt\\
& \left.  +\int_{Q}\tilde{\rho}^{-2}(s\tilde{\gamma})^{q_2-3} \vert \overline{H}_4\vert^2  dxdt \right).
  \end{split}   
 \end{equation}

 \textbf{Step 3:} Here, we estimate $\displaystyle \sum_{i,j=1}^{2}\int_{Q}\tilde{\rho}^{-2}_{*}|\psi^{i}_{j}|^{2}\,dxdt$.

 \noindent For this purpose, we multiply $\eqref{ad1}_{3}$ by $\tilde{\rho}^{-2}_{*}\psi^{1}_{1}$, integrating  over  $I$ and from \eqref{condiciones_a4} and \eqref{condiciones_a4f}, we get
 \begin{align*}
&\frac{1}{2}\frac{d}{dt}\left( \int_{I}\tilde{\rho}^{-2}_{*}|\psi^{1}_{1}|^{2}\,dx \right)+M_{0}\int_{I}\tilde{\rho}^{-2}_{*}|\psi^{1}_{1,x}|^{2}\,dx\\
& \leq \int_{I}-2\tilde{\rho}^{-3}_{*}(\tilde{\rho}_{*})_{t}|\psi^{1}_{1}|^{2}\,dx+C\int_{I} \tilde{\rho}^{-2}_{*}(|\psi^{1}_{1}|^{2}+|H_{3}|^{2})\,dx\\
&+\frac{C}{\mu^{2}_{1}}\int_{I}\rho^{-2}_{*}\tilde{\rho}^{-2}_{*}|\psi^{1}_1|^{2}\,dx + C\int_{\omega}\rho^{-2}_{*}\tilde{\rho}^{-2}_{*}|\varphi_1|^{2}\,dx 
 \end{align*}Since    $-2\tilde{\rho}^{-3}_{*}(\tilde{\rho}_{*})_{t}\leq 0$, from estimative above,   by applying Gronwall's inequality, it follows that
 \begin{align*}
&\int_{Q}\tilde{\rho}^{-2}_{*}|\psi^{1}_{1}|^{2}\,dxdt \\
&\leq C\left(\frac{1}{\mu^{2}_{1}}\int_{Q}\rho^{-2}_{*}\tilde{\rho}^{-2}_{*}|\psi^{1}_{1}|^{2}\,dxdt+\int_{Q}\tilde{\rho}^{-2}_{*}|H_{3}|^{2}\,dxdt +\int_{\omega\times (0,T)}\rho^{-2}_{*}\tilde{\rho}^{-2}_{*}|\varphi_1|^{2}\,dx\right).
 \end{align*}Thus, from the last inequality,  for $\mu_1>0$ large enough, we have
 \begin{align}\label{psi_1_1limi}
\int_{Q}\tilde{\rho}^{-2}_{*}|\psi^{1}_{1}|^{2}\,dxdt \leq C\left(\int_{Q}\tilde{\rho}^{-2}_{*}|H_{3}|^{2}\,dxdt +\int_{\omega\times (0,T)}\rho^{-2}_{*}\tilde{\rho}^{-2}_{*}|\varphi_1|^{2}\,dx\right).
 \end{align}Observe that, from similar computations, the following estimate holds
 \begin{equation}\label{allpsis_234}
\begin{split}
&\int_{Q}\tilde{\rho}^{-2}_{*}|\psi^{2}_{1}|^{2}\,dxdt + \int_{Q}\tilde{\rho}^{-2}_{*}|\psi^{1}_{2}|^{2}\,dxdt + \int_{Q}\tilde{\rho}^{-2}_{*}|\psi^{2}_{2}|^{2}\,dxdt  \\
& \leq C\left(\int_{Q}\tilde{\rho}^{-2}_{*}|H_{4}|^{2}\,dxdt +\int_{Q}\tilde{\rho}^{-2}_{*}|H_{5}|^{2}\,dxdt \right.\\
&\left.+\int_{Q}\tilde{\rho}^{-2}_{*}|H_{6}|^{2}\,dxdt +\int_{\omega\times (0,T)}\rho^{-2}_{*}\tilde{\rho}^{-2}_{*}|\varphi_1|^{2}\,dx\right).
\end{split}
 \end{equation}Consequently, from \eqref{psi_1_1limi}, \eqref{allpsis_234} and the properties for the weight functions, given by \eqref{pesos-1}, we deduce that
 \begin{equation}\label{totallimi_forpsis12212}
\begin{split}
&\sum_{i,j=1}^{2}\int_{Q}\tilde{\rho}^{-2}_{*}|\psi^{i}_{j}|^{2}\,dxdt \\
&\leq C\left(\int_{\omega\times (0,T)}\tilde{\rho}^{-2}(s\tilde{\gamma})^{q_3}\vert \varphi_1 \vert^2dxdt +\int_{Q}\tilde{\rho}^{-2}(s\tilde{\gamma})^{2q_{2}+8-d_{2}}\vert H_1\vert^2dxdt \right.\\
&	  + \int_{Q}\tilde{\rho}^{-2}(s\tilde{\gamma})^{q_2}\vert H_2\vert^2dxdt +\int_{Q}\tilde{\rho}^{-\frac{8}{5}}_{*} \vert H_{3}\vert^2  dxdt\\
&\left. +\int_{Q}\tilde{\rho}^{-\frac{8}{5}}_{*}\vert H_{4}\vert^2  dxdt+\int_{Q}\tilde{\rho}^{-\frac{8}{5}}_{*} \vert H_{5}\vert^2  dxdt
    +\int_{Q}\tilde{\rho}^{-\frac{8}{5}}_{*} \vert H_{6}\vert^2  dxdt\right).
\end{split}
 \end{equation}

 Finally, from \eqref{step6-c-6}, \eqref{step6-c-7} and \eqref{totallimi_forpsis12212}, we obtain the estimate \eqref{car_for_ad_ptipipri}.

\end{proof}

Now, we will demonstrate the main result of this section, that is, the system \eqref{lin1} is controllable to zero in time $T>0$. For this purpose, let us first  introduce the following definition.

\begin{definition}
Let $G_i \in L^2(Q)$, $i=1, \ldots, 6$, $h \in L^2(\omega\times (0,T))$ and $(f, g) \in  [H_0^1(I)]^2$.  A solution by transposition to the system \eqref{lin1} is defined as a vector-valued function $U= \left (y_1,  y_2,  p_1^1,  p_1^2,  p_2^1, p_2^2\right)^{\top}$ belonging to the space $[L^2(Q)]^6$, such that, for any $H_i \in L^2(Q)$, $i=1, \ldots, 6$,  the following integral identity holds:
\begin{equation}\label{transposition_identity}
\begin{split}
&\int_{Q}y_{1}H_{1}\,dxdt+\int_{Q}y_{2}H_{2}\,dxdt+\int_{Q}p^1_{1}H_{3}\,dxdt\\
&+\int_{Q}p^2_{1}H_{4}\,dxdt+\int_{Q}p^1_{2}H_{5}\,dxdt+\int_{Q}p^2_{2}H_{6}\,dxdt\\
&=\int_{I}f\varphi_{1}(0)\,dx+\int_{I}g\varphi_{2}(0)\,dx\\
&+\int_{Q}\left(G_{1}+h\chi_{\omega}\right)\varphi_{1}\,dxdt+\int_{Q}G_{2}\varphi_{2}\,dxdt+\int_{Q}G_{3}\psi^{1}_{1}\,dxdt\\
&+\int_{Q}G_{4}\psi^{2}_{1}\,dxdt+\int_{Q}G_{5}\psi^{1}_{2}\,dxdt+\int_{Q}G_{6}\psi^{2}_{2}\,dxdt,
\end{split}
\end{equation}where $W=(\varphi_{1},\varphi_{2},\psi^{1}_{1},\psi^{2}_{1},\psi^{1}_{2},\psi^{2}_{2})$ is solution of the adjoint system
\begin{align} \label{ad3333}
		\begin{cases}
			-\varphi_{1,t}-a(0)\varphi_{1,xx}+F_1'(0)\varphi_1+c\varphi_2   = H_1+\alpha_1\psi_1^1\chi_{\omega_{1,d}} +\alpha_2\psi_1^2\chi_{\omega_{2,d}},&\text{ in } Q,\\
			-\varphi_{2,t}-a(0)\varphi_{2,xx}+F_2'(0)\varphi_2  = H_2+\alpha_1\psi_2^1\chi_{\omega_{1,d}} +\alpha_2\psi_2^2\chi_{\omega_{2,d}},&\text{ in } Q,\\
			\psi_{1,t}^1-a(0)\psi_{1,xx}^1+F_1'(0)\psi_1^1=H_3-\frac{1}{\mu_1}\rho_*^{-2}\varphi_1\chi_{\omega_1},&\text{ in } Q, \\
			\psi_{1,t}^2-a(0)\psi_{1,xx}^2+F_1'(0)\psi_1^2=H_4-\frac{1}{\mu_2}\rho_*^{-2}\varphi_1\chi_{\omega_2},&\text{ in } Q,\\
			\psi_{2,t}^1-a(0)\psi_{2,xx}^1+F_2'(0)\psi_2^1+c\psi_1^1=H_5, &\text{ in } Q,\\
			\psi_{2,t}^2-a(0)\psi_{2,xx}^2+F_2'(0)\psi_2^2+c\psi_1^2=H_6, &\text{ in } Q,\\
   			\varphi_1(0,t) = \varphi_1(L,t) =0, \varphi_2(0,t) = \varphi_2(L,t) = 0, &\text{ on } (0,T),\\
			\psi_1^i(0,t) = \psi_1^i(L,t) =0, \psi_2^i(0,t) = \psi_2^i(L,t) = 0,\, i=1,2&\text{ on } (0,T),\\
			\varphi_1(x,T) =0, \, \varphi_2(x,T) =0,&\text{ in } I,\\
			\psi_1^i(x,0) = 0, \, \psi_2^i(x,0) = 0, \, i=1,2,&\text{ in } I. 
		\end{cases}		
	\end{align}
\end{definition}

The next Theorem establishes the existence and uniqueness of solutions for system \eqref{lin1} in the sense of transposition.

\begin{theorem}\label{wellpo_trans_sis}
    Let $G_i \in L^2(Q)$, $i=1, \ldots, 6$, $h \in L^2(\omega\times (0,T))$ and $(f, g) \in  [H_0^1(I)]^2$. Then, there exists a unique solution  $U= \left (y_1,  y_2,  p_1^1,  p_1^2,  p_2^1, p_2^2\right)^{\top} \in \left[C\left([0, T]; L^2(I) \right)\right]^6$ of system \eqref{lin1}, which verifies \eqref{transposition_identity}. 
\end{theorem}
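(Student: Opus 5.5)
The plan is the standard transposition argument: Riesz representation for existence, then density of test data for uniqueness. Set $\mathcal{H}=[L^2(Q)]^6$, and for $H=(H_1,\dots,H_6)\in\mathcal{H}$ let $W=W(H)$ be the solution of the adjoint system \eqref{ad3333}.

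\textbf{Step 1: well-posedness of \eqref{ad3333}.} The system is linear, with bounded coefficients $a(0)$, $F_i'(0)$, $c\in L^\infty(Q)$ and $\rho_*^{-2}\le D$. The $\varphi$-equations are backward in time and the $\psi$-equations are forward, so it is a forward--backward coupled system. I would treat it by a fixed point, or equivalently by Galerkin together with a priori estimates.
\begin{itemize}
\item Given $\varphi_1\in L^2(Q)$, the forward equations for $(\psi^1_1,\psi^2_1)$ and then $(\psi^1_2,\psi^2_2)$ are solved uniquely.
\item Substituting these into the backward equations for $(\varphi_1,\varphi_2)$ (with $\varphi_2$ first, then $\varphi_1$) yields a map $\varphi_1\mapsto\tilde\varphi_1$.
\item Its Lipschitz constant is at most $C(1/\mu_1+1/\mu_2)$, since the $\psi$-equations see $\varphi_1$ only through $\mu_i^{-1}\rho_*^{-2}\varphi_1$. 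So for $\mu_i$ large (as assumed throughout) it is a contraction on $L^2(Q)$.
\item Alternatively, one can avoid the smallness condition by using a Gronwall argument on subintervals.
\end{itemize}
Standard energy estimates then give $W\in[L^2(0,T;H^2\cap H^1_0)\cap H^1(0,T;L^2)]^6\subset [C([0,T];H^1_0(I))]^6$, with
\[
\|W\|_{[C([0,T];H^1_0)]^6}+\|W\|_{[L^2(0,T;H^2)]^6}\le C\|H\|_{\mathcal{H}}.
\]
In particular $\|\varphi_i(0)\|_{L^2(I)}\le C\|H\|_{\mathcal{H}}$.

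\textbf{Step 2: existence.} Define the linear functional $\Lambda(H)$ as the right-hand side of \eqref{transposition_identity}. By Cauchy--Schwarz and Step 1,
\[
|\Lambda(H)|\le C\Big(\|f\|_{L^2}+\|g\|_{L^2}+\|h\|_{L^2(\omega\times(0,T))}+\sum_{i}\|G_i\|_{L^2(Q)}\Big)\|H\|_{\mathcal{H}}.
\]
Hence $\Lambda$ is continuous on $\mathcal{H}$. The Riesz representation theorem gives a unique $U\in\mathcal{H}$ with $(U,H)_{\mathcal{H}}=\Lambda(H)$ for all $H$, which is exactly \eqref{transposition_identity}. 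The same bound also gives $\|U\|_{\mathcal{H}}\le C(\cdots)$.

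\textbf{Step 3: uniqueness.} If $U,\tilde U$ both satisfy \eqref{transposition_identity}, then $(U-\tilde U,H)_{\mathcal{H}}=0$ for every $H\in\mathcal{H}$. Taking $H=U-\tilde U$ gives $U=\tilde U$.

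\textbf{Step 4: regularity $U\in[C([0,T];L^2(I))]^6$.} This is the main obstacle, because the Riesz argument only yields $U\in\mathcal{H}$.
\begin{itemize}
\item Approximate the data $(f,g,G_i,h)$ by smooth data $(f^n,g^n,G_i^n,h^n)$.
\item For smooth data, solve \eqref{lin1} classically, again by a contraction for $\mu_i$ large, using the same forward--backward structure with roles reversed. Integration by parts against $W$ shows that this classical solution satisfies \eqref{transposition_identity}, so by uniqueness it coincides with the transposition solution.
\item For classical solutions, the energy estimate
\[
\|U^n\|_{[C([0,T];L^2)]^6}\le C\Big(\|f^n\|+\|g^n\|+\|h^n\|+\sum_i\|G^n_i\|\Big)
\]
holds. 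The $y$-equations are handled forward from $t=0$ and the $p$-equations backward from $t=T$; the coupling terms are absorbed as in Step 1.
\item Applying this estimate to differences $U^n-U^m$ shows that $(U^n)$ is Cauchy in $[C([0,T];L^2)]^6$. Its limit agrees with $U$ by the $\mathcal{H}$-continuity obtained in Step 2.
\end{itemize}
The delicate point is carrying out the forward--backward energy estimate uniformly. The $y_1$-equation contains $p_1^i$ on the right-hand side, which is only controlled backward from $t=T$. I would absorb this term using the factor $\mu_i^{-1}$ and $\rho_*^{-2}\le D$, which is why the largeness of $\mu_i$ enters here as well.
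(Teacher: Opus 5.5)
Your proposal is correct and is essentially the standard transposition argument the paper invokes by citing \cite{BAPA}, Theorem~2.4: you solve the forward--backward adjoint system \eqref{ad3333}, use Riesz representation for existence, test with $U-\tilde U$ for uniqueness, and use a density/energy argument for continuity in time. Your contraction for \eqref{ad3333}, based on the factor $1/\mu_i$ and $\rho_*^{-2}\le 1$, is legitimate under the section's standing assumption that $\mu_i$ is large. The one thing to discard is the aside that a Gronwall argument on subintervals would remove the largeness of $\mu_i$. The $\varphi$-equations carry terminal data and the $\psi$-equations initial data, so short-interval solutions cannot simply be concatenated, and without some additional monotonicity structure that claim is unsupported.
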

\begin{proof}
The theorem can be proved following the arguments used in \cite{BAPA}, Theorem $2.4$, and therefore the details are omitted.
\end{proof}
Let us prove the  main result of  this section. 
\begin{proof}[Proof of Theorem~\ref{teo:linearized_control}] Let us define  the following space
$$
{\mathcal{P}}=\{ (\varphi_{1}, \varphi_{2}, \psi^{1}_{1}, \psi^{2}_{1},  \psi^{1}_{2}, \psi^{2}_{2})\in C^{3}(\overline{Q})^{6}: \, \varphi_{i}=0,\, \psi^{i}_{j}=0\,\,\mbox{on}\,\, \Sigma,\, \psi^{i}_{j}(0)=0\,\, \mbox{in}\,\,  I \},
$$and we consider the function $\mathcal{B}: {\mathcal{P}}\times {\mathcal{ P}}\mapsto \mathbb{R}$, defined by 
$$\begin{array}{l}
	\mathcal{B}\left( (\varphi_{1},\varphi_{2},\psi^{1}_{1},\psi^{2}_{1},\psi^{1}_{2},\psi^{2}_{2});(\tilde{\varphi}_{1},\tilde{\varphi}_{2},\tilde{\psi}^{1}_{1},\tilde{\psi}^{2}_{1},\tilde{\psi}^{1}_{2}, \tilde{\psi}^{2}_{2}) \right)\\
	\noalign{\smallskip}
	\displaystyle 	= \int_{Q} \rho^{-2}_{0}\left(L^{*}_{1}\varphi_{1}+c\varphi_{2}-\alpha_{1}\psi^{1}_{1}\chi_{\omega_{d}}-\alpha_{2}\psi^{2}_{1}\chi_{\omega_{d}} \right) \left( L^{*}_{1}\tilde{\varphi}_{1}+c\tilde{\varphi}_{2}-\alpha_{1}\tilde{\psi}^{1}_{1}\chi_{\omega_{d}}-\alpha_{2}\tilde{\psi}^{2}_{1}\chi_{\omega_{d}} \right) \,dxdt \\
	\noalign{\smallskip}
	\displaystyle +\int_{Q} \rho^{-2}_{0}\left(L^{*}_{2}\varphi_{2}-\alpha_{1}\psi^{1}_{2}\chi_{\omega_{d}}-\alpha_{2}\psi^{2}_{2}\chi_{\omega_{d}} \right) \left( L^{*}_{2}\tilde{\varphi}_{2}-\alpha_{1}\tilde{\psi}^{1}_{2}\chi_{\omega_{d}}-\alpha_{2}\tilde{\psi}^{2}_{2}\chi_{\omega_{d}} \right) \,dxdt \\
	\noalign{\smallskip}
	\displaystyle +\sum^{2}_{i=1}\int_{Q}\rho^{-2}_{0}\left( L_{1}\psi^{i}_{1}+\frac{1}{\mu_{i}}\rho^{-2}_{*}\varphi_{1}\chi_{\omega_{i}}  \right)\left( L_{1}\tilde{\psi}^{i}_{1}+\frac{1}{\mu_{i}}\rho^{-2}_{*}\tilde{\varphi}_{1}\chi_{\omega_{i}}  \right)\,dxdt\\
	\noalign{\smallskip}
	\displaystyle  +\sum^{2}_{i=1}\int_{Q}\rho^{-2}_{0}\left(L_{2}\psi^{i}_{2}+c\psi^{i}_{1}\right)\left(L_{2}\tilde{\psi}^{i}_{2}+c\tilde{\psi}^{i}_{1}\right)\,dxdt\\
	\noalign{\smallskip}
	\displaystyle + \int_{\omega\times (0,T)}\rho^{-2}_{1}\varphi_{1}\tilde{\varphi}_{1}\,dxdt,\\
	\noalign{\smallskip}
\end{array}$$	for all $(\varphi_{1}, \varphi_{2}, \psi^{1}_{1}, \psi^{2}_{1},  \psi^{1}_{2}, \psi^{2}_{2}),\, (\tilde{\varphi}_{1}, \tilde{\varphi}_{2}, \tilde{\varphi}^{1}_{1}, \tilde{\varphi}^{2}_{1},  \tilde{\varphi}^{1}_{2}, \tilde{\varphi}^{2}_{2})\in \, \mathcal{P}$, where
$$
\left\{\begin{array}{l}
	\displaystyle L^{*}_{i}\varphi_{i}=-\varphi_{i,t}-a(0)\varphi_{ixx}+F'_{i}(0)\varphi_{i},\\
	\noalign{\smallskip}
	\displaystyle L_{i}\psi^{j}_{i}=\psi^{j}_{i,t}-a(0)\psi^{j}_{i,xx}, \, \, i=1, 2, \, \, j=1, 2.\end{array}\right.
$$

\noindent From the arguments devoloped in \cite{FurImanu}, we deduce that $\mathcal{B}(\cdot,\cdot)$  is an inner product in   $\mathcal{P}$. By a standard result, we have   that  there  exists  a  vectorial space  $\overline{\mathcal{P}}$  such that  $\mathcal{P} \subset \overline{\mathcal{P}}$  and  $\left( \overline{\mathcal{P}},  \left\langle \cdot,\cdot \right\rangle   \right)$  with  $\left\langle \cdot, \cdot \right\rangle_{\overline{\mathcal{P}}}=\mathcal{B}(\cdot,\cdot) $  is  a  Hilbert space.

\noindent On the other hand, let us    define  the  linear  functional  $\mathcal{L}: \overline{\mathcal{P}}\mapsto \mathbb{R}$,  given by 
\begin{align*}
 \mathcal{L} (\varphi_{1}, \varphi_{2}, \psi^{1}_{1}, \psi^{2}_{1},  \psi^{1}_{2}, \psi^{2}_{2})  &= \int_{I}f\varphi_{1}(0)\,dx+\int_{I}g \varphi_{2}(0)\,dx\\   
 &	+\int_{Q}\left( G_{1}\varphi_{1}+G_{2}\varphi_{2}+G_{3}\psi^{1}_{1}+G_{4}\psi^{2}_{1}+G_{5}\psi^{1}_{2}+G_{6}\psi^{2}_{2}  \right)\,dxdt.
\end{align*}From Theorem \ref{Carleman-Theorem-2}, we deduce that
\begin{align*}
 & \left| \mathcal{L}   (\varphi_{1}, \varphi_{2}, \psi^{1}_{1}, \psi^{2}_{1},  \psi^{1}_{2}, \psi^{2}_{2}) \right|  \\
  & \leq C \left( \sum_{i=1}^{2}\int_{Q}\tilde{\rho}^{-2}|\varphi_{i}|^{2}\,dxdt+\sum_{i,j=1}^{2}\int_{Q} \tilde{\rho}^{-2}|\psi^{i}_{j}|^{2}\,dxdt+\sum_{i=1}^{2}\int_{I}|\varphi_{i}(0)|^{2}\,dx  \right)^{1/2}\\
  & \leq C \left( 
	\int_{\omega\times (0,T)}\tilde{\rho}^{-\frac{8}{5}}_{*}(s\tilde{\gamma})^{q_3}\vert \varphi_1 \vert^2dxdt +\displaystyle \int_{Q} \rho^{-2}_{0}| L^{*}_{1}\varphi_{1}+c\varphi_{2}-\alpha_{1}\psi^{1}_{1}\chi_{\omega_{d}}-\alpha_{2}\psi^{2}_{1}\chi_{\omega_{d}}|^{2}\,dxdt\right.\\
    & +\int_{Q} \rho^{-2}_{0}| L^{*}_{2}\varphi_{2}-\alpha_{1}\psi^{1}_{2}\chi_{\omega_{d}}-\alpha_{2}\psi^{2}_{2}\chi_{\omega_{d}}|^{2} \,dxdt +\sum^{2}_{i=1}\int_{Q}\rho^{-2}_{0}| L_{1}\psi^{i}_{1}+\frac{1}{\mu_{i}}\rho^{-2}_{*}\varphi_{1}\chi_{\omega_{i}}  |^{2}\,dxdt\\
    &+\left.\sum^{2}_{i=1}\int_{Q}\rho^{-2}_{0}|L_{2}\psi^{i}_{2}+c\psi^{i}_{1}|^{2}\,dxdt + \int_{\omega\times (0,T)}\rho^{-2}_{1}|\varphi_{1}|^{2}\,dxdt
		\right)^{1/2}\\
        &\leq C \mathcal{B}\left((\varphi_{1},\varphi_{2},\psi^{1}_{1},\psi^{2}_{1},\psi^{1}_{2},\psi^{2}_{2}); (\varphi_{1},\varphi_{2},\psi^{1}_{1},\psi^{2}_{1},\psi^{1}_{2},\psi^{2}_{2}) \right)^{1/2}\\
        &\leq C \Vert(\varphi_{1},\varphi_{2},\psi^{1}_{1},\psi^{2}_{1},\psi^{1}_{2},\psi^{2}_{2})\Vert_{\mathcal{P}},
\end{align*}where,  $C=C\left(f,g,G_{1},G_{2},G_{3},G_{4},G_{5},G_{6}\right)$, is some positive constant. Thus, $\mathcal{L}$ is continuous.

Therefore, using the  Lax Milgram  theorem,   there exists a unique  $(\hat{\varphi}_{1},\hat{\varphi}_{2},\hat{\psi}^{1}_{1},\hat{\psi}^{2}_{1},\hat{\psi}^{1}_{2},\hat{\psi}^{2}_{2})\in \overline{\mathcal{P}}$, such that the following variational identity holds:
\begin{equation}\label{variati_lax_pre}
\mathcal{B}\left( (\hat{\varphi}_{1},\hat{\varphi}_{2},\hat{\psi}^{1}_{1},\hat{\psi}^{2}_{1},\hat{\psi}^{1}_{2},\hat{\psi}^{2}_{2}), (\varphi_{1},\varphi_{2},\psi^{1}_{1},\psi^{2}_{1},\psi^{1}_{2},\psi^{2}_{2}) \right) = \mathcal{L}(\varphi_{1},\varphi_{2},\psi^{1}_{1},\psi^{2}_{1},\psi^{1}_{2},\psi^{2}_{2}),
\end{equation}for all $(\varphi_{1},\varphi_{2},\psi^{1}_{1},\psi^{2}_{1},\psi^{1}_{2},\psi^{2}_{2})\in \overline{\mathcal{P}}$.

On the other hand, if we denote by
\begin{equation}\label{second_for_transpo}
\left\{\begin{array}{l}
y_{1}=\rho^{-2}_{0}\left(L^{*}_{1}\hat{\varphi}_{1}+c\hat{\varphi}_{2}-\alpha_{1}\hat{\psi}^{1}_{1}\chi_{\omega_{d}}-\alpha_{2}\hat{\psi}^{2}_{1}\chi_{\omega_{d}} \right),\\
	\noalign{\smallskip}
	y_{2}=\rho^{-2}_{0}\left(L^{*}_{2}\hat{\varphi}_{2}-\alpha_{1}\hat{\psi}^{1}_{2}\chi_{\omega_{d}}-\alpha_{2}\hat{\psi}^{2}_{2}\chi_{\omega_{d}} \right),\\
	p^{1}_{1}= \rho^{-2}_{0}\left( L_{1}\hat{\psi}^{1}_{1}+\frac{1}{\mu_{1}}\rho^{-2}_{*}\hat{\varphi}_{1}\chi_{\omega_{1}}  \right), \\
	\noalign{\smallskip}
	p^{2}_{1}= \rho^{-2}_{0}\left( L_{1}\hat{\psi}^{2}_{1}+\frac{1}{\mu_{2}}\rho^{-2}_{*}\hat{\varphi}_{1}\chi_{\omega_{2}}  \right), \\
	\noalign{\smallskip}
	p^{1}_{2}= \rho^{-2}_{0}\left( L_{2}\hat{\psi}^{1}_{2}+c \hat{\psi}^{1}_{1}\right), \\
	\noalign{\smallskip}
	p^{2}_{2}= \rho^{-2}_{0}\left( L_{2}\hat{\psi}^{2}_{2}+ c \hat{\psi}^{2}_{1} \right), \\
	\noalign{\smallskip}
	h=-\rho^{-2}_{1}\hat{\varphi}_{1},
\end{array}\right.
\end{equation}from \eqref{variati_lax_pre} and \eqref{second_for_transpo}, we deduce that
\begin{equation}\label{vara_2identi_2}
\begin{array}{l}
	\displaystyle \int_{Q}y_{1}H_{1}\,dxdt+\int_{Q}y_{2}H_{2}\,dxdt\\
	\displaystyle \sum_{i=1}^{2}\int_{Q}p^{i}_{1}H_{i+2}\,dxdt+\sum_{i=1}^{2}\int_{Q}p^{i}_{2}H_{i+4}\,dxdt\\
	\displaystyle = \int_{I}f \varphi_{1}(0)\,dx+\int_{I}g \varphi_{2}(0)\,dx\\
	\displaystyle +\int_{Q}\left( (G_{1}+h\chi_{\omega})\varphi_{1}+G_{2}\varphi_{2}+G_{3}\psi^{1}_{1}+G_{4}\psi^{2}_{1}+G_{5}\psi^{1}_{2}+G_{6}\psi^{2}_{2} \right)\,dxdt, 
\end{array}
\end{equation}for all $(\varphi_{1},\varphi_{2},\psi^{1}_{1},\psi^{2}_{1},\psi^{1}_{2},\psi^{2}_{2})\in \overline{\mathcal{P}}$, where  $(\varphi_{1},\varphi_{2},\psi^{1}_{1},\psi^{2}_{1},\psi^{1}_{2},\psi^{2}_{2})$  is    solution  of the adjoint system \eqref{ad3333}.

\noindent Thus,  from \eqref{transposition_identity},  \eqref{vara_2identi_2} and  the uniqueness of solutions by transposition, given in Theorem \ref{wellpo_trans_sis}, $(y_{1},y_{2},p^{1}_{1},p^{2}_{1}, p^{1}_{2},p^{2}_{2})$ is the unique weak solution of \eqref{lin1}. In particular, for
\begin{equation*}
(\varphi_{1},\varphi_{2},\psi^{1}_{1},\psi^{2}_{1},\psi^{1}_{2},\psi^{2}_{2}) = (\hat{\varphi}_{1}, \hat{\varphi}_{2}, \hat{\psi}^{1}_{1}, \hat{\psi}^{2}_{1}, \hat{\psi}^{1}_{2}, \hat{\psi}^{2}_{2}),
\end{equation*}from \eqref{ad3333}-\eqref{vara_2identi_2}, we deduce that
\begin{align*}
\displaystyle \int_{Q}\rho^{2}_{0}|y_{1}|^{2}\,dxdt + \int_{Q}\rho^{2}_{0}|y_{2}|^{2}\,dxdt  +\sum_{i,j=1}^{2}\int_{Q}\rho^{2}_{0}|p^{i}_{j}|^{2}\,dxdt+\int_{Q}\rho^{2}_{1}|h|^{2}\,dxdt<+\infty,
\end{align*}proving the regularities \eqref{est-linear1}.

Now, let us prove  the estimate \eqref{est-linear2}. Let $t \in [0, T]$.  Multiplying 
$\eqref{lin1}_{1}$ by $\hat{\rho}^{2}_{0}y_{1}$ and integrate over  $I$, we have that
\begin{align*}
	&\frac{1}{2}\frac{d}{dt} \left( \int_{I}\hat{\rho}^{2}_{0}|y_{1}|^{2}\,dx \right)+a(0)\int_{I}\hat{\rho}^{2}_{0}|y_{1,x}|^{2}\,dx + F'_{1}(0)\int_{I}\hat{\rho}^{2}_{0}|y_{1}|^{2}\,dx \\
   & =\int_{I}\hat{\rho}_{0}\hat{\rho}_{0,t}|y_{1}|^{2}\,dx +\int_{I}\left(G_{1}+h\chi_{\omega}-\frac{1}{\mu_{1}}\rho^{-2}_{\ast}p^{1}_{1}\chi_{\omega_{1}}-\frac{1}{\mu_{2}}\rho^{-2}_{\ast}p^{2}_{1}\chi_{\omega_{2}}    \right)\hat{\rho}^{2}_{0}y_{1}\,dx.
\end{align*}Then, integrating the previous  identity on $(0,t)$, from \eqref{weight-linear1},  we get 
\begin{equation}\label{linear-eq3}
\begin{split}
&\int_{I}\hat{\rho}^{2}_{0}|y_{1}(x,t)|^{2}\,dx+a(0)\int_{Q}\hat{\rho}^{2}_{0}|y_{1,x}|^{2}\,dx dt \\
&\leq C\left( \int_{Q}\rho^{2}_{0}|y_{1}|^{2}\,dxdt+\int_{Q}\rho^{2}_{0}(|p^{1}_{1}|^{2}+|p^{2}_{1}|^{2})\,dxdt \right.\\
			 &\left. +\int_{\omega\times(0,T)}\rho^{2}_{1}|h|^{2}\,dxdt+\int_{Q}\tilde{\rho}^{2}|G_{1}|^{2}\,dxdt+\int_{I}|f|^{2}\,dx \right),
\end{split}
\end{equation}for all $t \in [0, T]$.

\noindent Similarly, multiplying $\eqref{lin1}_{2}$, $\eqref{lin1}_{3}$, $\eqref{lin1}_{4}$, $\eqref{lin1}_{5}$ and $\eqref{lin1}_{6}$      by $\hat{\rho}^{2}_{0}y_{2}$, $\hat{\rho}^{2}_{0}p^{1}_{1}$, $\hat{\rho}^{2}_{0}p^{2}_{1}$, $\hat{\rho}^{2}_{0}p^{1}_{2}$ and $\hat{\rho}^{2}_{0}p^{2}_{2}$, respectively,  integrate over  $I\times (0, t)$ and using \eqref{weight-linear1},    we obtain that
\begin{equation}\label{linear-eq4}
\begin{split}
&\int_{I}\hat{\rho}^{2}_{0}|y_{2}(x,t)|^{2}\,dx+a(0)\int_{Q}\hat{\rho}^{2}_{0}|y_{2,x}|^{2}\,dxdt \\
&\leq  C\left(   \int_{Q}\rho^{2}_{0}|y_{1}|^{2}\,dxdt+\int_{Q}\rho^{2}_{0}|y_{2}|^{2}\,dxdt +\int_{Q}\tilde{\rho}^{2}|G_{2}|^{2}\,dxdt+\int_{I}|g|^{2}\,dx \right)
\end{split}
\end{equation}
\begin{equation}\label{linear-eq5}
\begin{split}
&\int_{I}\hat{\rho}^{2}_{0}|p^{i}_{1}(x,t)|^{2}\,dx+a(0)\int_{Q}\hat{\rho}^{2}_{0}|p^{i}_{1,x}|^{2}\,dxdt\\
&\leq C\left(  \int_{Q}\rho^{2}_{0}|y_{1}|^{2}\,dxdt+ \int_{Q}\rho^{2}_{0}|p^{i}_{1}|^{2}\,dxdt  +\int_{Q}\rho^{2}_{0}|p^{i}_{2}|^{2}\,dxdt+\int_{Q}\tilde{\rho}^{2}|G_{i+2}|^{2}\,dxdt\right), 
\end{split}
\end{equation}and 
\begin{equation}\label{linear-eq6}
\begin{split}
  & \int_{I}\hat{\rho}^{2}_{0}|p^{i}_{2}(x,t)|^{2}\,dx+a(0)\int_{Q}\hat{\rho}^{2}_{0}|p^{i}_{2,x}|^{2}\,dxdt\\
  &\leq C\left(  \int_{Q}\rho^{2}_{0}|y_{2}|^{2}\,dxdt +\int_{Q}\rho^{2}_{0}|p^{i}_{2}|^{2}\,dxdt+\int_{Q}\tilde{\rho}^{2}|G_{i+4}|^{2}\,dxdt\right),
\end{split}
\end{equation}for all $t \in [0, T]$,  $i=1,2.$ Consequently, from estimates \eqref{linear-eq3}-\eqref{linear-eq6}, the inequality \eqref{est-linear2} holds.

Finally, let us prove  the  estimate \eqref{est-linear3}.  Multiplying    $\eqref{lin1}_{1}$  by  $\hat{\rho}^{2}_{1}y_{1,t}$ and  integrating over  $I$, we  have 
\begin{align*}
&\frac{a(0)}{2}\frac{d}{dt}\left( \int_{I}\hat{\rho}^{2}_{1}|y_{1,x}|^{2}\,dx	\right)+\int_{I}\hat{\rho}^{2}_{1}|y_{1,t}|^{2}\,dx\\
&=\int_{I}a(0)\hat{\rho}_{1}\hat{\rho}_{1,t}|y_{1,x}|^{2}\,dx-\int_{I}F'_{1}(0)\hat{\rho}^{2}_{1}y_{1}y_{1,t}\,dx  \\
&+\int_{I}\hat{\rho}^{2}_{1}(G_{1}+h\chi_{\omega}-\frac{1}{\mu_{1}}\rho^{-2}_{\ast}p^{1}_{1}\chi_{\omega_{1}}-\frac{1}{\mu_{2}}\rho^{-2}_{\ast}p^{2}_{1}\chi_{\omega_{2}})y_{1,t}\,dx.
\end{align*}Integrating the above  identity on $(0,t)$, using \eqref{weight-linear1} and estimate \eqref{est-linear2}, it follows that
\begin{equation}\label{linear-eq8}
\begin{split}
&\int_{I}\hat{\rho}^{2}_{1}|y_{1,x}(x, t)|^{2}\,dx+\int_{Q}\hat{\rho}^{2}_{1}|y_{1,t}|^{2}\,dxdt \\
& \leq C \left( \int_{Q}\rho^{2}_{0}|y_{1}|^{2}\,dxdt + \int_{Q}\rho^{2}_{0}(|p^{1}_{1}|^{2}+|p^{2}_{1}|^{2})\,dxdt \right.\\
&\left.	  +\int_{\omega\times(0,T)}\rho^{2}_{1}|h|^{2}\,dxdt+\int_{Q}\tilde{\rho}^{2}|G_{1}|^{2}\,dxdt +\int_{Q}\hat{\rho}^{2}_{0}|y_{1,x}|^{2}\,dxdt+\int_{I}|f_{x}|^{2}\,dx\right) \\
&\leq C\left( \sum_{i=1}^{2}\int_{Q}\rho^{2}_{0}|y_{i}|^{2}\,dxdt+\sum_{i,j=1}^{2}\rho^{2}_{0}|p^{i}_{j}|^{2}\,dxdt +\sum_{i=1}^{2}\int_{Q}\tilde{\rho}^{2}|G_{i}|^{2}\,dxdt \right.\\
&\left.+\int_{I}|f_{x}|^{2}\,dx+\int_{I}|g_{x}|^{2}\,dx+\int_{\omega\times(0,T)}\rho^{2}_{1}|h|^{2}\,dxdt\right),
\end{split}
\end{equation}for all $t \in [0, T]$.

\noindent In a similar way, multiplying $\eqref{lin1}_{2}$, $\eqref{lin1}_{3}$, $\eqref{lin1}_{4}$, $\eqref{lin1}_{5}$ and $\eqref{lin1}_{6}$      by $\hat{\rho}^{2}_{1}y_{2,t}$, $-\hat{\rho}^{2}_{1}p^{1}_{1,t}$, $-\hat{\rho}^{2}_{1}p^{2}_{1,t}$, $-\hat{\rho}^{2}_{1}p^{1}_{2,t}$ and $-\hat{\rho}^{2}_{1}p^{2}_{2,t}$, respectively,  integrate over  $I\times (0, t)$, using \eqref{weight-linear1} and inequality \eqref{est-linear2}, we deduce that
\begin{equation}\label{linear-eq9}
\begin{split}
&\int_{I}\hat{\rho}^{2}_{1}|y_{2,x}(x,t)|^{2}\,dx+\int_{Q}\hat{\rho}^{2}_{1}|y_{2,t}|^{2}\,dxdt\\
&\leq C\left( \sum_{i=1}^{2}\int_{Q}\rho^{2}_{0}|y_{i}|^{2}\,dxdt+\sum_{i,j=1}^{2}\int_{Q}\rho^{2}_{0}|p^{i}_{j}|^{2}\,dxdt+\sum_{i=1}^{2}\int_{Q}\tilde{\rho}^{2}|G_{i}|^{2}\,dxdt   \right.\\
&\left.+\int_{I}|f_{x}|^{2}\,dx+\int_{I}|g_{x}|^{2}\,dx+\int_{\omega\times (0,T)}\rho^{2}_{1}|h|^{2}\,dxdt\right),
	\end{split}
\end{equation}
\begin{equation}\label{linear-eq10}
\begin{split}
& \int_{I}\hat{\rho}^{2}_{1}|p^{i}_{1,x}(x,t)|^{2}\,dx+\int_{Q}\hat{\rho}^{2}_{1}|p^{i}_{1,t}|^{2}\,dxdt \\
& \leq C\left( \sum_{i=1}^{2}\int_{Q}\rho^{2}_{0}|y_{i}|^{2}\,dxdt+\sum_{i,j=1}^{2}\int_{Q}\rho^{2}_{0}|p^{i}_{j}|^{2}\,dxdt+\sum_{i=1}^{2}\int_{Q}\tilde{\rho}^ {2}|G_{i+2}|^{2}\,dxdt   \right.\\
&\left.+\int_{I}|f_{x}|^{2}\,dx+\int_{I}|g_{x}|^{2}\,dx+\int_{\omega\times (0,T)}\rho^{2}_{1}|h|^{2}\,dxdt\right),
\end{split}
\end{equation}and
\begin{equation}\label{linear-eq11}
\begin{split}
&\int_{I}\hat{\rho}^{2}_{1}|p^{i}_{2,x}(x,t)|^{2}\,dx+\int_{Q}\hat{\rho}^{2}_{1}|p^{i}_{2,t}|^{2}\,dxdt\\
&\leq C\left( \sum_{i=1}^{2}\int_{Q}\rho^{2}_{0}|y_{i}|^{2}\,dxdt+\sum_{i,j=1}^{2}\int_{Q}\rho^{2}_{0}|p^{i}_{j}|^{2}\,dxdt+\sum_{i=1}^{2}\int_{Q}\tilde{\rho}^{2}|G_{i+4}|^{2}\,dxdt   \right.\\
& \left.+\int_{I}|f_{x}|^{2}\,dx+\int_{I}|g_{x}|^{2}\,dx+\int_{\omega\times (0,T)}\rho^{2}_{1}|h|^{2}\,dxdt\right),
\end{split}
\end{equation}for all $t \in [0, T]$, $i=1, 2$.

\noindent Furthermore, multiplying   $\eqref{lin1}_{1}$  by  $-\hat{\rho}^{2}_{1}y_{1,xx}$ and  integrating over  $I$, we obtain  that
\begin{align*}
&a(0)\int_{I}\hat{\rho}^{2}_{1}|y_{1,xx}|^{2}\,dx\\
	& = -\int_{I}\hat{\rho}^{2}_{1}(-y_{1,t}-F'_{1}(0)y_{1} +G_{1}+h\chi_{\omega}-\frac{1}{\mu_{1}}\rho^{-2}_{\ast}p^{1}_{1} \chi_{\omega_{1}}-\frac{1}{\mu_{2}}\rho^{-2}_{\ast}p^{2}_{1}\chi_{\omega_{2}} )y_{1,xx}\,dx.
\end{align*}Consequently, from above equality and the estimates \eqref{est-linear2}, \eqref{linear-eq8}, -\eqref{linear-eq11}, we have that
\begin{equation}
\begin{split}
& \int_{Q}\hat{\rho}^{2}_{1}|y_{1,xx}|^{2}\,dxdt \\
&\leq C\left(   \sum_{i=1}^{2}\int_{Q}\rho^{2}_{0}|y_{i}|^{2}\,dxdt+\sum_{i,j=1}^{2}\int_{Q}\rho^{2}_{0}|p^{i}_{j}|^{2}\,dxdt+\sum_{i=1}^{2}\int_{Q}\tilde{\rho}^{2}|G_{i}|^{2}\,dxdt   \right. \\
& \left.+\int_{I}|f_{x}|^{2}\,dx+\int_{I}|g_{x}|^{2}\,dx+\int_{\omega\times (0,T)}\rho^{2}_{1}|h|^{2}\,dxdt\right).
\end{split}
\end{equation}Similarly,
\begin{equation}\label{linear-eq13}
\begin{split}
&\int_{Q}\hat{\rho}^{2}_{1}|y_{2,xx}|^{2}\,dxdt+\sum_{i,j=1}^{2}\int_{Q}\hat{\rho}^{2}_{1}|p^{i}_{j,xx}|^{2}\,dxdt\\
&\leq C\left(   \sum_{i=1}^{2}\int_{Q}\rho^{2}_{0}|y_{i}|^{2}\,dxdt+\sum_{i,j=1}^{2}\int_{Q}\rho^{2}_{0}|p^{i}_{j}|^{2}\,dxdt+\sum_{i=1}^{2}\int_{Q}\tilde{\rho}^{2}|G_{i}|^{2}\,dxdt   \right.\\
&\left.+\int_{I}|f_{x}|^{2}\,dx+\int_{I}|g_{x}|^{2}\,dx+\int_{\omega\times (0,T)}\rho^{2}_{1}|h|^{2}\,dxdt\right).
\end{split}
\end{equation}Therefore, from estimates \eqref{linear-eq8}-\eqref{linear-eq13}, we have that \eqref{est-linear3}  holds and the proof ends.

\end{proof}

\section{\bf Hierarchical local null controllability problem}\label{sec:qlinear}

In the last section we proved the global null controllability of the linearized system. Using the inverse function theorem of Liusternik, in this section we prove the local controllability of the initial nonlinear system \eqref{Eq1}. Let us recall the statement of the theorem:

\begin{theorem}[Liusternik's Theorem]
    Let $Y$ and $Z$ be Banach spaces and let $A : B_r(0) \subset Y \rightarrow Z$ be a $C^1$ mapping. Let us assume that the derivative $A'(0) : Y \rightarrow Z$ is onto and let us denote set $\xi_0 = A(0)$. Then, there exist $\epsilon>0$, a mapping $W : B_\epsilon(\xi_0) \subset Z \rightarrow Y$, and a constant $K > 0$ satisfying , for any $z \in  B_\epsilon(\xi_0)$, 
    \begin{itemize}
        \item $W(z) \in B_r(0)$ and $A(W (z)) = z$,
        \item $\parallel W(z)\parallel_Y \leq K\parallel z - \xi_0 \parallel_Z$.
    \end{itemize}
\end{theorem}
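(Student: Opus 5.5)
The plan is to prove Liusternik's Theorem by a Newton-type (Graves) iteration built on a bounded right inverse of $A'(0)$. Throughout, $\xi_0 = A(0)$.

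\textbf{Step 1: a bounded right inverse.} Since $A'(0):Y\to Z$ is a bounded linear surjection between Banach spaces, the open mapping theorem gives a constant $c>0$ with the following property: for every $w\in Z$ there is $u\in Y$ such that $A'(0)u=w$ and $\|u\|_Y\le c\|w\|_Z$. I will fix a choice $w\mapsto u=:R(w)$ so that $W$ is a genuine mapping. In the Hilbert setting where the theorem is used later, $R$ can be taken linear: $A'(0)$ restricted to $(\ker A'(0))^\perp$ is a bounded bijection onto $Z$, and $R$ is its inverse.

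\textbf{Step 2: a uniform linearization estimate.} Since $A$ is $C^1$, choose $0<\delta\le r$ such that $\|A'(y)-A'(0)\|\le \frac{1}{2c}$ for $\|y\|_Y<\delta$. Applying the mean value inequality to $y\mapsto A(y)-A'(0)y$ on the convex ball $B_\delta(0)$ gives
\[
\|A(y_1)-A(y_2)-A'(0)(y_1-y_2)\|_Z\le \tfrac{1}{2c}\|y_1-y_2\|_Y ,\qquad y_1,y_2\in B_\delta(0).
\]

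\textbf{Step 3: the iteration.} Set $\epsilon=\delta/(4c)$ and fix $z\in B_\epsilon(\xi_0)$. Define
\[
y_0=0,\qquad u_n=R\bigl(z-A(y_n)\bigr),\qquad y_{n+1}=y_n+u_n .
\]
Since $A'(0)u_n=z-A(y_n)$, we can write
\[
z-A(y_{n+1})=-\bigl[A(y_{n+1})-A(y_n)-A'(0)u_n\bigr].
\]
The key claim, proved by induction on $n$, is
\[
\|z-A(y_n)\|_Z\le 2^{-n}\|z-\xi_0\|_Z ,\qquad \|y_n\|_Y< \delta .
\]

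\emph{Residual bound.} If both $y_n$ and $y_{n+1}$ lie in $B_\delta(0)$, Step 2 and the bound $\|u_n\|_Y\le c\|z-A(y_n)\|_Z$ give
\[
\|z-A(y_{n+1})\|_Z\le \tfrac{1}{2c}\|u_n\|_Y\le \tfrac12\|z-A(y_n)\|_Z .
\]

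\emph{Staying in the ball.} Summing the step bounds,
\[
\|y_{n+1}\|_Y\le \sum_{k\le n}\|u_k\|_Y\le c\sum_{k\ge0}2^{-k}\|z-\xi_0\|_Z\le 2c\,\|z-\xi_0\|_Z<2c\,\epsilon=\delta/2<\delta .
\]
This bound for $y_{n+1}$ uses only the residual bounds up to index $n$, which are known by the induction hypothesis, so it is available before the contraction estimate for step $n+1$ is applied.

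\textbf{Step 4: passing to the limit.} Since $\sum_n\|u_n\|_Y<\infty$, the sequence $(y_n)$ is Cauchy in $Y$. Its limit $y=:W(z)$ satisfies $\|y\|_Y\le 2c\|z-\xi_0\|_Z<\delta\le r$, so $W(z)\in B_r(0)$. By continuity of $A$ and $\|z-A(y_n)\|_Z\to0$, we get $A(W(z))=z$. This yields both conclusions of the theorem with $K=2c$.

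\textbf{Main obstacle.} The delicate point is the coupled induction in Step 3. The contraction of the residual is only valid while the iterates remain in $B_\delta(0)$, and remaining in the ball relies in turn on the geometric decay of the residuals. This is exactly what dictates the choice $\epsilon=\delta/(4c)$ relative to the constants $c$ (from the open mapping theorem) and $\delta$ (from continuity of $A'$). A secondary subtlety is making $W$ single-valued in a general Banach space; this is handled by fixing the selection $R$ once and for all, which is canonical and linear in the Hilbert case.
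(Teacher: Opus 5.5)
Your proof is correct and complete. The open-mapping right inverse $R$ (a nonlinear selection is enough), the mean-value bound from continuity of $A'$ at $0$, and the strong induction that keeps the iterates in $B_\delta(0)$ with $\epsilon=\delta/(4c)$ together give $W(z)\in B_\delta(0)\subset B_r(0)$, $A(W(z))=z$, and $K=2c$.

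The paper does not prove this theorem itself; it only cites Alekseev--Tikhomirov--Fomin. As far as I recall, the argument there follows the same classical Lyusternik--Graves scheme: a bounded right inverse obtained from Banach's open mapping theorem, followed by a modified Newton iteration. Your proposal is therefore a self-contained version of the cited argument.
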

The proof of this theorem can be found in \cite{Alekseev}.

\subsection{Spaces of regularity for state and main control}
Using the notations
$$
    \displaystyle L^{*}_{i}\varphi_{i}=-\varphi_{i,t}-a(0)\varphi_{ixx}+F'_{i}(0)\varphi_{i}, \qquad
	\displaystyle L_{i}\psi^{j}_{i}=\psi^{j}_{i,t}-a(0)\psi^{j}_{i,xx} +F'_{i}(0)\psi_{i}^j,
$$
let us introduce the space
\begin{equation*}
 	\begin{split}
 	Y:=\{&(y_1,y_2,p^1_1,p^1_2,p^2_1,p^2_2,h) \ | \ y_1,y_2, y_{1,x}, y_{2,x}, \rho_0y_1, \rho_0y_2, \rho_0p^i_j \in L^2(Q),i,j=1,2,\\
 	&\rho_1h \in L^2(\omega \times (0,T)), \\ 
 	&\tilde{\rho} [L_1 y_1  -h\chi_\omega +\frac{1}{\mu_1}\rho_*^{-2}p_1^1\chi_{\omega_1} +\frac{1}{\mu_2}\rho_*^{-2}p_1^2\chi_{\omega_2}] \in L^2(Q), \ \tilde{\rho}[L_2 y_2   +cy_1] \in L^2(Q),\\
 	&\tilde{\rho}[L_1^* p_1^i + cp_2^i -\alpha_i y_1 \chi_{\omega_{i,d}}] \in L^2(Q), \
 	\tilde{\rho}[L_2^* p_2^i  -\alpha_i y_2 \chi_{\omega_{i,d}}]  \in L^2(Q),\\
 	& y_1(0), \ y_2(0) \in  H^1_0(I), \ y_1 |_{\Sigma} = 0, \ y_2 |_{\Sigma} = 0, \ p_j^i |_{\Sigma} = 0, i,j=1,2  
    \}
 	\end{split} 
\end{equation*}

\noindent with the norm
\begin{equation*}
\begin{split}
&\Vert(y_1,y_2,p^1_1,p^1_2,p^2_1,p^2_2,h)\Vert^2_Y \\
&=\norm{\rho_0 y_1} ^2_{L^2(Q)}+\norm{\rho_0 y_2} ^2_{L^2(Q)}+\sum_{i,j=1}^{2}\norm{\rho_0 p_i^j} ^2_{L^2(Q)}+\norm{\rho_1 h} ^2_{L^2(\omega\times (0,T))}\\
    &+\norm{\tilde{\rho}(L_1 y_{1} -h\chi_\omega +\frac{1}{\mu_1}\rho_*^{-2}p_1^1\chi_{\omega_1} +\frac{1}{\mu_2}\rho_*^{-2}p_1^2\chi_{\omega_2})}^{2}_{L^2(Q)}\\
    &+\norm{\tilde{\rho}(L_2 y_{2}  +cy_1)}^{2}_{L^2(Q)} +\norm{\tilde{\rho}[L_1^*p_1^1 +cp_2^1 -\alpha_1 y_1 \chi_{\omega_{1,d}}]}^{2}_{L^2(Q)}\\
    &+\norm{\tilde{\rho}[L_1^* p_1^2+cp_2^2 -\alpha_2 y_1 \chi_{\omega_{2,d}}]}^{2}_{L^2(Q)}+\norm{\tilde{\rho}[L_2^* p_2^1 -\alpha_1 y_2 \chi_{\omega_{1,d}}]}^{2}_{L^2(Q)}\\
    &+\norm{\tilde{\rho}[L_2^* p_2^2 -\alpha_2 y_2 \chi_{\omega_{2,d}}]}^{2}_{L^2(Q)}+\norm{y_1(0)}^2_{H^1_0(I)}+\norm{y_2(0)}^2_{H^1_0(I)}.
\end{split}
\end{equation*}
It is clear that $Y$ is a Banach space with the norm $\| \cdot \|_Y$. From Theorem \ref{teo:linearized_control}, we have
$$
\sum_{i=1}^2 \| \hat \rho_0 y_i \|^2_{L^2(0,T;H_0^1(I))} + \sum_{i,j=1}^2 \| \hat \rho_0 p_j^i \|^2_{L^2(Q)} \leq C \| (y_1,y_2,p_i^1,p_2^1,p_1^2,p_2^2,h) \|_Y^2.
$$
and
\begin{equation*}
    \begin{split}
        &\sum_{i=1}^2 \sup_{t \in [0,T]} \int_I \hat \rho_1^2 |y_{i,x}|^2 dx + \sum_{i,j=1}^2 \sup_{t \in [0,T]} \int_I \hat \rho_1^2 |p_{j,x}^i|^2 dx \\
        &+ \sum_{i=1}^2 \left( \int_Q \hat \rho_1^2 |y_{i,xx}|^2 dxdt + \int_Q \hat \rho_1^2 |y_{i,t}|^2 dxdt \right) \\
        & + \sum_{i,j=1}^2 \left( \int_Q \hat \rho_1^2 |p_{j,xx}^i|^2 dxdt + \int_Q \hat \rho_1^2 |p_{j,t}^i|^2 dxdt \right) \leq \| (y_1,y_2,p_i^1,p_2^1,p_1^2,p_2^2,h) \|_Y^2.        
    \end{split}
\end{equation*}

On the other hand, let us consider the Hilbert space
$$\mathcal{ M}:= L^2(\tilde{\rho}^2,Q) = \{ w \in L^2(Q) \ : \ \tilde{\rho} w \in L^2(Q) \},$$ 
and define the space
\begin{eqnarray*}
    Z:= \mathcal{ M} \times \mathcal{ M} \times \mathcal{ M} \times \mathcal{ M} \times \mathcal{ M} \times \mathcal{ M} \times H^1_0(I) \times H^1_0(I),
\end{eqnarray*}
with the norm
\begin{equation*}
    \begin{split}
        \| (F,G,F_1,F_2,F_3,F_4,y^{0}_1,y^{0}_{2}) \|_Z^2 = &\int_Q \tilde{\rho}^2 |F|^2 dxdt + \int_Q \tilde{\rho}^2 |G|^2 dxdt + \sum_{i=1}^4 \int_Q \tilde{\rho}^2 |F_i|^2 dxdt \\
        &+ \|y^{0}_{1}\|_{H_0^1(I)}^2+ \|y^{0}_{2}\|_{H_0^1(I)}^2.
    \end{split}
\end{equation*}
Consider the mapping $A:Y \to Z$, defined by $A: = (A_0^1,A_0^2,A_1^1,A_1^2,A_2^1,A_2^2,A_3^1,A_3^2)$,  where
\begin{equation} \label{aplicação A}
	\begin{cases}
		A_0^1 =  y_{1,t}-(a(y_1)y_{1,x})_x + F_1(y_1)   - h\chi_w+\frac{1}{\mu_1} \rho_*^{-2} p_1^1\chi_{w_1}+\frac{1}{\mu_2} \rho_*^{-2} p_1^2\chi_{w_2}, \\  
		A_0^2 = y_{2,t}-(a(y_2)y_{2,x})_x + F_2(y_2) + c y_1,  \\
        A_1^1 = -p_{1,t}^1 - (a(y_1)p_{1,x}^1)_x + a'(y_1)y_{1,x}p_{1,x}^1+F_1'(y_1)p_1^1+cp_2^1-\alpha_1 y_1 \chi_{w_{1,d}}, \\
		A_1^2 = -p_{1,t}^2 - (a(y_1)p_{1,x}^2)_x + a'(y_1) y_{1,x} p_{1,x}^2 + F_1'(y_1)p_1^2+cp_2^2-\alpha_2 y_1 \chi_{w_{2,d}}, \\
		A_2^1 = -p_{2,t}^1 - (a(y_2)p_{2,x}^1)_x + a'(y_2) y_{2,x} p_{2,x}^1 + F_2'(y_2)p_2^1-\alpha_1 y_2 \chi_{w_{1,d}}, \\
		A_2^2 = -p_{2,t}^2 - (a(y_2)p_{2,x}^2)_x +a'(y_2)y_{2,x}p_{2,x}^2 + F_2'(y_2)p_2^2 - \alpha_2 y_2 \chi_{w_{2,d}}, \\
		A_3^1 = y_1(0),\\
        A_3^2 = y_2(0).\\
	\end{cases}
\end{equation}

The proof of the main Theorem uses the following three lemmas.
\begin{lemma}\label{lema1}
$A : Y \rightarrow Z$ is well defined
and continuous.
\end{lemma}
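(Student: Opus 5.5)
We will show that each component of $A$ maps $Y$ into the corresponding factor of $Z$ and that the resulting map is continuous; in fact we prove that $A$ is bounded on bounded sets and locally Lipschitz, which gives continuity directly. The components $A_3^1,A_3^2$ are immediate, since $y_i(0)\in H^1_0(I)$ is part of the norm of $Y$.

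For the PDE components we write each one as its linear part plus a nonlinear remainder. The linear parts are exactly the quantities controlled in $\|\cdot\|_Y$, namely $\tilde\rho(L_1y_1-h\chi_\omega+\dots)$, $\tilde\rho(L_2y_2+cy_1)$, and so on. For $A_0^1$ we have
\[
A_0^1=\bigl[L_1y_1-h\chi_\omega+\tfrac{1}{\mu_1}\rho_*^{-2}p_1^1\chi_{\omega_1}+\tfrac{1}{\mu_2}\rho_*^{-2}p_1^2\chi_{\omega_2}\bigr]-\bigl((a(y_1)-a(0))y_{1,x}\bigr)_x+\bigl(F_1(y_1)-F_1'(0)y_1\bigr).
\]
The remainders are therefore $-(a(y_1)-a(0))y_{1,xx}-a'(y_1)|y_{1,x}|^2$ and $F_1(y_1)-F_1'(0)y_1$. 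In the adjoint components they are $-(a(y_1)-a(0))p^i_{1,xx}$ and $F_1'(y_1)p^i_1-F_1'(0)p^i_1$. The term $a'(y_1)y_{1,x}p^i_{1,x}$ in $A_1^i$ is cancelled by the expansion of $(a(y_1)p^i_{1,x})_x$, and the analogous terms appear for $y_2,p^i_2$.

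Each remainder has to be shown to lie in $\mathcal M$, that is, to satisfy $\tilde\rho\,(\cdot)\in L^2(Q)$. We use \eqref{condiciones_a4}, \eqref{condiciones_a4f} together with the mean value theorem to get $|a(y)-a(0)|\le M|y|$ and $|F(y)-F'(0)y|\le M|y|^2$. We also use the 1D embedding $H^1_0(I)\hookrightarrow L^\infty(I)$ combined with the weighted estimates stated just before the lemma, which follow from Theorem \ref{teo:linearized_control}:
\[
\sup_t\|\hat\rho_1 y_{i,x}(t)\|_{L^2(I)},\quad \|\hat\rho_1 y_{i,xx}\|_{L^2(Q)},\quad \|\hat\rho_1 p^i_{j,xx}\|_{L^2(Q)},\quad \|\hat\rho_0 y_i\|_{L^2(0,T;H^1_0)}
\]
are all bounded by $C\|\cdot\|_Y$.

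The typical term is then estimated as
\[
\int_Q\tilde\rho^2|y_1|^2|y_{1,xx}|^2\le \Bigl\|\tfrac{\tilde\rho}{\hat\rho_1^2}\Bigr\|_{\infty}^2\,\sup_t\|\hat\rho_1 y_1(t)\|_{L^\infty}^2\,\|\hat\rho_1 y_{1,xx}\|_{L^2(Q)}^2\le C\|(y,p,h)\|_Y^4 ,
\]
where we use $\tilde\rho\le C\hat\rho_1^2$ from \eqref{eq:compara_rhos3}. The remaining terms are handled the same way: $\tilde\rho|y_{1,x}|^2$ uses $\sup_t\|\hat\rho_1y_{1,x}\|_{L^2}$ and $\|\hat\rho_1y_{1,x}\|_{L^2(0,T;L^\infty)}\lesssim\|\hat\rho_1 y_{1,xx}\|_{L^2(Q)}$; the quadratic $F$-term and $y_1p^i_{1,xx}$ are treated identically.

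The main obstacle is purely bookkeeping of the weights. Each quadratic term must be split as a product of two weighted factors whose weights multiply to something that dominates $\tilde\rho$, and the only relation available for this is $\tilde\rho\le C\hat\rho_1^2$. Near $t=0$ the weights are bounded, so everything there is harmless. Near $t=T$ the relation $\tilde\rho\le C\hat\rho_1^2$ follows from $\tfrac35\cdot 2>1$ times the comparison $\tilde\alpha\le\tilde\alpha^*$, and it absorbs the polynomial factor $(s\tilde\gamma)^{-2}$.

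For continuity we apply the same splitting to differences. For $(y,p,h)\to(\bar y,\bar p,\bar h)$ in $Y$ we write, for example,
\[
(a(y_1)-a(0))y_{1,xx}-(a(\bar y_1)-a(0))\bar y_{1,xx}=(a(y_1)-a(\bar y_1))y_{1,xx}+(a(\bar y_1)-a(0))(y_1-\bar y_1)_{xx},
\]
and bound $|a(y_1)-a(\bar y_1)|\le M|y_1-\bar y_1|$. This gives
\[
\|A(y,p,h)-A(\bar y,\bar p,\bar h)\|_Z\le C\bigl(1+\|(y,p,h)\|_Y+\|(\bar y,\bar p,\bar h)\|_Y\bigr)\|(y,p,h)-(\bar y,\bar p,\bar h)\|_Y,
\]
which is local Lipschitz continuity of $A$ and proves the lemma.
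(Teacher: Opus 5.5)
Your proposal is correct and follows essentially the paper's route. You write each component as the linear quantity already controlled by $\|\cdot\|_Y$ plus the remainders $(a(y_1)-a(0))y_{1,xx}$, $a'(y_1)|y_{1,x}|^2$, $F_1(y_1)-F_1'(0)y_1$, $(F_1'(y_1)-F_1'(0))p^i_1$, and bound these with the mean value theorem, $H^1_0(I)\hookrightarrow L^\infty(I)$, the $\hat\rho_0,\hat\rho_1$ energy bounds and $\tilde\rho\le C\hat\rho_1^2$; this is exactly the paper's $I_1$--$I_4$, $J_1$--$J_4$, $K_1$--$K_3$ argument, and your difference splitting makes explicit the continuity that the paper only asserts as ``similarly''. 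One small slip: the piece $(a'(y_1)-a'(\bar y_1))|y_{1,x}|^2$ is cubic, so the local Lipschitz factor is really $C\bigl(1+\|(y,p,h)\|_Y+\|(\bar y,\bar p,\bar h)\|_Y\bigr)^2$ rather than linear, which still gives continuity.
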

\begin{proof} Let   $(y_1,y_2,p^1_1,p^1_2,p^2_1,p^2_2,h) \in Y$.

\textbf{  $A$ is well-defined: }
	Indeed. First, 	  we have
	$$
    \begin{array}{l}
        \parallel A_0^1 (y_1,y_2,p^1_1,p^1_2,p^2_1,p^2_2,h) \parallel^2_{\mathcal{ M}}\\
        \displaystyle=\int_{Q}\tilde{\rho}^{2}\left|y_{1,t}-(a(y_{1})y_{1,x})_{x}+F_{1}(y_{1})-h\chi_{\omega}+\frac{1}{\mu_{1}} \rho_*^{-2} p^{1}_{1}\chi_{\omega_{1}}+\frac{1}{\mu_{2}} \rho_*^{-2} p^{2}_{1}\chi_{\omega_{2}}  \right|^{2}\,dxdt.
    \end{array}
    $$
    
    \noindent Consequently, we have
    \begin{equation}\label{lemma-eq1}
        \begin{array}{l}
    	   \left\| A_0^1(y_1,y_2,p^1_1,p^1_2,p^2_1,p^2_2,h) \right\|^2_{\mathcal{ M}} \\[1.0em]
    	   \displaystyle \leq 4\left\|\tilde{\rho}\left( L_{1}y_{1}-h\chi_{\omega}+\frac{1}{\mu_{1}}\rho^{-2}_{\ast}\chi_{\omega_{1}}+\frac{1}{\mu_{2}}\rho^{-2}_{\ast}p^{2}_{1}\chi_{2}   \right) \right\|^{2}_{L^{2}(Q)}\\[1.0em]
    	   \displaystyle + 4 \|\tilde{\rho}\left(  a(y_{1})-a(0) \right)y_{1,xx}\|^{2}_{L^{2}(Q)}+4 \Vert\tilde{\rho}a'(y_{1})(y_{1,x})^{2}\Vert^{2}_{L^{2}(Q)}\\[1.0em]
    	   \displaystyle  +4 \|\tilde{\rho} \left( F_{1}(y_{1})-F'(0)y_{1} \right)\|^{2}_{L^{2}(Q)}\\ [1.0em]
    	   =I_{1}+I_{2}+I_{3}+I_{4}.
        \end{array}
    \end{equation}Now, let us estimate  each term $I_{i}$,  for  $i=1,2,3,4$. First, by the definition of the norm in $Y$,
    \begin{equation}\label{lemma-eq2}
    	I_{1}\leq 4  ||(y_1,y_2,p^1_1,p^1_2,p^2_1,p^2_2,h)||^{2}_{Y}.
    \end{equation}On the other hand,
    $$
    \begin{array}{l}
    	\displaystyle I_{2}=6\int_{Q}\tilde{\rho}^{2}|(a(y_{1})-a(0))y_{1,x}|^{2}\,dxdt
        \leq C \int_{Q}\tilde{\rho}^{2}|y_{1}|^{2}|y_{1,xx}|^{2}\,dxdt.
    \end{array}
    $$
    From Sobolev embedding and using \eqref{eq:compara_rhos3}, we have that
    
    $$
    \begin{array}{l}
        \displaystyle 	I_{2}\leq  C\int_{0}^{T}\tilde{\rho}^{2}||y_{1}||^{2}_{L^{\infty}(I)}||y_{1,xx}||^{2}_{L^{2}(I)}\,dt\\[1.0em]
        
        \displaystyle \quad	\leq C \int_{0}^{T}\tilde{\rho}^{2}||y_{1,x}||^{2}_{L^{2}(I)}||y_{1,xx}||^{2}_{L^{2}(I)}\,dt\\[1.0em]
    
        \displaystyle \quad\leq C \left(  \sup_{t\in [0,T]}\int_{I}\hat{\rho}^{2}_{1}|y_{1,x}|^{2}\,dx \right)  \left( \int_{Q}\hat{\rho}^{2}_{1}|y_{1,xx}|^{2}\,dxdt \right).
    \end{array}
    $$
    
    \noindent Then,
    \begin{equation}\label{lemma-eq5}
        I_{2}\leq C ||(y_1,y_2,p^1_1,p^1_2,p^2_1,p^2_2,h)||^{4}_{Y}
    \end{equation}   	Once again, by Sobolev embedding  and estimate  \eqref{eq:compara_rhos3}, we deduce that
    $$
    \begin{array}{l}
    \displaystyle I_{3} \leq 4 \int_{Q}\tilde{\rho}^{2}|a'(y_{1})|^{2}|y_{1,x}|^{4}\,dxdt\\
      \noalign{\smallskip}\phantom{AA}
    \displaystyle \leq C \int_{0}^{T}\tilde{\rho}^{2}||y_{1,x}||^{2}_{L^{2}(I)}||y_{1,x}||^{2}_{L^{\infty}(I)}\,dt\\
      \noalign{\smallskip}\phantom{AA}
    \displaystyle \leq C \int_{0}^{T}\tilde{\rho}^{2}||y_{1,x}||^{2}_{L^{2}(I)}||y_{1,xx}||^{2}_{L^{2}(I)}\,dt\\
      \noalign{\smallskip}\phantom{AA}
    \displaystyle \leq C \left(\sup_{t\in [0,T]}\int_{I}\hat{\rho}^{2}_{1}|y_{1,x}|^{2}\,dx  \right)\left( \int_{Q}\hat{\rho}^{2}_{1}|y_{1,xx}|^{2}\,dxdt\right).
        \end{array}
    $$
    Therefore,
    \begin{equation}\label{lemma-eq6}
        I_{3}\leq C ||(y_1,y_2,p^1_1,p^1_2,p^2_1,p^2_2,h)||^{4}_{Y}.
    \end{equation}
    
    \noindent Proceeding analogously, 
    since  $F_{1}(y_{1})=F'_{1}(\lambda_{0}y_{1})y_{1},\,\,  \mbox{for  some}\,\, \lambda_{0}\in (0,1)$, and \eqref{eq:compara_rhos3}, we get
    $$
    \begin{array}{l}
        \displaystyle 	I_{4}=4\int_{Q}\tilde{\rho}^{2}|F_{1}(y_{1})-F'_{1}(0)y_{1}|^{2}\,dxdt\\
    	  \noalign{\smallskip}\phantom{AA}
        \displaystyle 	=4\int_{Q}\tilde{\rho}^{2}|(F'_{1}(\lambda_{0}y_{1})-F'_{1}(0))y_{1}|^{2}\,dxdt\\
    	  \noalign{\smallskip}\phantom{AA}
        \displaystyle \leq 	C\int_{Q}\tilde{\rho}^{2}|y_{1}|^{4}\,dxdt\\
    	  \noalign{\smallskip}\phantom{AA}
        \displaystyle 	\leq C \left(\sup_{t\in [0,T]}\int_{I}\hat{\rho}^{2}_{1}|y_{1,x}|^{2}\,dx  \right) \left( \int_{Q}\rho^{2}_{0}|y_{1}|^{2}\,dxdt  \right).    
    \end{array}
    $$
    
    \noindent Thus, 
    \begin{equation}\label{lemma-eq7}
        I_{4}\leq 	C   ||(y_1,y_2,p^1_1,p^1_2,p^2_1,p^2_2,h)||^{4}_{Y}.
    \end{equation} From estimates \eqref{lemma-eq2}, \eqref{lemma-eq5},  \eqref{lemma-eq6}, and \eqref{lemma-eq7}, we get
    \begin{equation}\label{lemma-eq8}
        \begin{array}{l}
    	   \parallel A_0^1 (y_1,y_2,p^1_1,p^1_2,p^2_1,p^2_2,h) \parallel^2_{\mathcal{ M}}\\ [1.0em]
            \leq 	C||(y_1,y_2,p^1_1,p^1_2,p^2_1,p^2_2,h)||^{2}_{Y}\left(1+||(y_1,y_2,p^1_1,p^1_2,p^2_1,p^2_2,h)||^{2}_{Y} \right).
    	\end{array}
    \end{equation} 
    \noindent Secondly, it follows that
    $$
    \begin{array}{l}
   		\displaystyle \parallel A_0^2 (y_1,y_2,p^1_1,p^1_2,p^2_1,p^2_2,h) \parallel^2_{\mathcal{M}}\\[1.0em]
\displaystyle=\int_{Q}\tilde{\rho}^{2}|y_{2,t}-(a(y_{2})y_{2,x})_{x}+F_{2}(y_{2})+cy_{1}|^{2}\,dxdt\\[1.0em]
   		
   		\displaystyle \leq 4\int_{Q}\tilde{\rho}^{2}|L_{2}+cy_{1}|^{2}\,dxdt+4\int_{Q}\tilde{\rho}^{2}|(a(y_{2})-a(0))y_{2,xx}|^{2}\,dxdt\\[1.0em]
   		
   		\displaystyle +4\int_{Q}\tilde{\rho}^{2}|a'(y_{2})|^{2}|y_{2,xx}|^{4}\,dxdt
   		\displaystyle  +4\int_{Q}\tilde{\rho}^{2}|F_{2}(y_{2})-F'_{2}(0)y_{2}|^{2}\,dxdt\\[1.0em]
   		
   		= J_{1}+J_{2}+J_{3}+J_{4}.
    \end{array}
    $$

    \noindent Clearly,
    \begin{equation}\label{lemma-eq9}
    	J_{1}= 4\int_{Q}\tilde{\rho}^{2}|L_{2}+cy_{1}|^{2}\,dxdt \leq 4||(y_1,y_2,p^1_1,p^1_2,p^2_1,p^2_2,h)||^{2}_{Y}.
    \end{equation}On the other hand, using  \eqref{eq:compara_rhos3}, we obtain that
    \begin{equation}\label{lemma-eq10}
    \begin{aligned}
     	J_{2}&= 4\int_{Q}\tilde{\rho}^{2}|a(0)-a(y_{2})|^{2}|y_{2,xx}|^{2}\,dxdt\\
    	  \noalign{\smallskip}\phantom{AA}
        &\displaystyle 	\leq  C  \int_{Q}\tilde{\rho}^{2}|y_{2}|^{2}|y_{2,xx}|^{2}\,dxdt\\
    	  \noalign{\smallskip}\phantom{AA}
    	&\displaystyle \leq C   \left(\sup_{t\in [0,T]}\int_{I}\hat{\rho}^{2}_{1}|y_{2,x}|^{2} \,dx\right)\left( \int_{Q}\hat{\rho}^{2}_{1}|y_{2,xx}|^{2}\,dxdt  \right)\\
\noalign{\smallskip}\phantom{AA}
    	  & \leq C||(y_1,y_2,p^1_1,p^1_2,p^2_1,p^2_2,h)||^{4}_{Y}.
    \end{aligned}
    \end{equation}Similarly,  we have that the following estimate:
    \begin{equation}\label{lemma-eq11}
    \begin{aligned}
\displaystyle J_{3}&\leq 4 \int_{Q}\tilde{\rho}^{2}|a'(y_{2})|^{2}|y_{2,x}|^{2}\,dxdt\\
    	  \noalign{\smallskip}\phantom{AA}
    	\displaystyle &\leq C\int_{Q}\tilde{\rho}^{2}|y_{2,x}|^{4}\,dxdt\\
        \noalign{\smallskip}\phantom{AA}
    	\displaystyle &\leq  C\left( \sup_{t\in [0,T]}\int_{I}\hat{\rho}^{2}_{1}|y_{2,x}|^{2}\,dx \right) \left( \int_{Q}\hat{\rho}^{2}_{1}|y_{2,xx}|^{2}\,dxdt \right) \\
        \noalign{\smallskip}\phantom{AA}
        &\leq C \Vert(y_1,y_2,p^1_1,p^1_2,p^2_1,p^2_2,h)\Vert^{4}_{Y}.
    \end{aligned}
    \end{equation}Moreover,  
    $$
    \begin{array}{l}
        \displaystyle J_{4}= 4\int_{Q}\tilde{\rho}^{2}|F_{2}(y_{2})-F'_{2}(0)y_{2}|^{2}|y_{2}|^{2}\,dxdt
        =4\int_{Q}\tilde{\rho}^{2}|F'_{2}(\lambda_{0}y_{2})-F'_{2}(0)|^{2}|y_{2}|^{2}\,dxdt,
    \end{array}
    $$
    for some  $\lambda_{0}\in (0,1)$. From  Sobolev embedding and \eqref{eq:compara_rhos3}, we deduce that
    \begin{equation}\label{lemma-eq12}
    \begin{aligned}
\displaystyle J_{4}&\leq C\int_{Q}\tilde{\rho}^{2}|y_{2}|^{4}\,dxdt\\
        \noalign{\smallskip}\phantom{AA}
        \displaystyle &\leq C \left(\sup_{t\in [0,T]}\int_{I}\hat{\rho}^{2}_{1}|y_{2,x}|^{2}\,dx \right) \left(\int_{Q}\rho^{2}_{0}|y_{2}|^{2}\,dxdt \right)\\
        \noalign{\smallskip}\phantom{AA}
        &\leq C  \Vert(y_1,y_2,p^1_1,p^1_2,p^2_1,p^2_2,h)\Vert^{4}_{Y}.
    \end{aligned}
    \end{equation}Thus, from \eqref{lemma-eq9},  \eqref{lemma-eq10}, \eqref{lemma-eq11} and \eqref{lemma-eq12}, we  have
    \begin{equation}\label{lemma-eq13}
        \begin{array}{l}
    	   \parallel A_0^2 (y_1,y_2,p^1_1,p^1_2,p^2_1,p^2_2,h) \parallel^2_{\mathcal{M}}\\[1.0em]
    		 \leq C  ||(y_1,y_2,p^1_1,p^1_2,p^2_1,p^2_2,h)||^{2}_{Y}(1+ ||(y_1,y_2,p^1_1,p^1_2,p^2_1,p^2_2,h)||^{2}_{Y}).
    	\end{array}
    \end{equation}
    
    \noindent As a third point, we have
    $$
    \begin{array}{l}
        \displaystyle 	\parallel A_1^1 (y_1,y_2,p^1_1,p^1_2,p^2_1,p^2_2,h) \parallel^2_{\mathcal{M}}\\[1.0em]
     \leq  3 \displaystyle\int_{Q}\tilde{\rho}^{2}|L^{\ast}_{1}p^{1}_{1}+cp^{1}_{2}-\alpha_{1}y_{1}\chi_{\omega_{1d}}|^{2}\,dxdt +3\displaystyle\int_{Q}\tilde{\rho}^{2}|a(y_{1})-a(0)|^{2}|p^{1}_{1,xx}|^{2}\,dxdt\\[1.0em]
    	\displaystyle +3\int_{Q}\tilde{\rho}^{2}|F'_{1}(y_{1})-F'_{1}(0)|^{2}|p^{1}_{1}|^{2}\,dxdt =K_{1}+K_{2}+K_{3}.
    \end{array}
    $$
    
     \noindent Now, we estimate the  terms  $K_{1}$,  $K_{2}$, and  $K_{3}$. Obviously, we have
    \begin{equation}\label{lemma-eq14}
        K_{1}\leq  C  ||(y_1,y_2,p^1_1,p^1_2,p^2_1,p^2_2,h)||^{2}_{Y}.
    \end{equation} On the other hand, using  \eqref{eq:compara_rhos3}, we  deduce  that
    \begin{equation}\label{lemma-eq15}
\begin{aligned}
\displaystyle 	K_{2}&=3\int_{Q}\tilde{\rho}^{2}|a(y_{1})-a(0)|^{2}|p^{1}_{1,xx}|^{2}\,dxdt\\
        \noalign{\smallskip}\phantom{AA}
      	&\displaystyle \leq C\int_{Q}\tilde{\rho}^{2}|y_{1}|^{2}|p^{1}_{1,xx}|^{2}\,dxdt\\
    	  \noalign{\smallskip}\phantom{AA}
    	&\displaystyle \leq C \left( \sup_{t\in [0,T]}\int_{I}\hat{\rho}^{2}_{1}|y_{1,x}|^{2}\,dx  \right)\left( \int_{Q}\hat{\rho}^{2}_{1}|p^{1}_{1,xx}|^{2}\,dxdt  \right)\\
    	  \noalign{\smallskip}\phantom{AA}
         & \leq C \Vert(y_1,y_2,p^1_1,p^1_2,p^2_1,p^2_2,h)\Vert^{4}_{Y}.
\end{aligned}
    \end{equation}Proceeding analogously, 
    we get
\begin{equation}\label{lemma-eq16}
\begin{aligned}
 \displaystyle 	K_{3}&=3\int_{Q}\tilde{\rho}^{2}|F'_{1}(y_{1})-F'_{1}(0)|^{2}|p^{1}_{1}|^{2}\,dxdt\\
    	  \noalign{\smallskip}\phantom{AA}
    	&\displaystyle \leq C\int_{Q}\tilde{\rho}^{2}|y_{1}|^{2}|p^{1}_{1}|^{2}\,dxdt\\
    	  \noalign{\smallskip}\phantom{AA}
    	&\displaystyle \leq C\left(  \sup_{t\in [0,T]}\int_{I}\hat{\rho}^{2}_{1}|y_{1,x}|^{2}\,dx  \right) \left(  \int_{Q}\rho^{2}_{0}|p^{1}_{1}|^{2}\,dxdt \right)\\
    	  \noalign{\smallskip}\phantom{AA}
         & \leq C \Vert(y_1,y_2,p^1_1,p^1_2,p^2_1,p^2_2,h)\Vert^{4}_{Y}.
\end{aligned}
\end{equation} From the estimates \eqref{lemma-eq14}, \eqref{lemma-eq15}  and  \eqref{lemma-eq16}, it follows that 
    \begin{equation}\label{lemma-eq17}
    	\begin{array}{l}
    		\parallel A_1^1 (y_1,y_2,p^1_1,p^1_2,p^2_1,p^2_2,h) \parallel^2_{\mathcal{M}}\\ [1.0em]
    		\leq C  \Vert(y_1,y_2,p^1_1,p^1_2,p^2_1,p^2_2,h)\Vert^{2}_{Y}(1+ \Vert(y_1,y_2,p^1_1,p^1_2,p^2_1,p^2_2,h)\Vert^{2}_{Y}).
    	\end{array}
    \end{equation} Analogously to the estimate \eqref{lemma-eq17}, we also have, for $i=1,2$, $j=1,2$,
    \begin{equation}\label{lemma-eq18}
    	\begin{array}{l}
    		\parallel A_i^j (y_1,y_2,p^1_1,p^1_2,p^2_1,p^2_2,h) \parallel^2_{\mathcal{M}}\\[1.0em]
    		
    		\leq C  ||(y_1,y_2,p^1_1,p^1_2,p^2_1,p^2_2,h)||^{2}_{Y}(1+ ||(y_1,y_2,p^1_1,p^1_2,p^2_1,p^2_2,h)||^{2}_{Y}).
    	\end{array}
    \end{equation}Furthermore,  from Theorem \ref{teo:linearized_control}, we obtain, for $i=3$, $j=1,2$,
    \begin{equation}\label{lemma-eq19}
        \begin{array}{l}
    	   \parallel A_i^j (y_1,y_2,p^1_1,p^1_2,p^2_1,p^2_2,h) \parallel^2_{H^{1}_{0}(I)}\\[1.0em]
    		
    		\leq C  ||(y_1,y_2,p^1_1,p^1_2,p^2_1,p^2_2,h)||^{2}_{Y}.
        \end{array}
    \end{equation}
    
    Finally, \eqref{lemma-eq8}, \eqref{lemma-eq13}, \eqref{lemma-eq17}, \eqref{lemma-eq18} and \eqref{lemma-eq19}, we  deduce that
    $$
        A(y_1,y_2,p^1_1,p^1_2,p^2_1,p^2_2,h)\in Z,\quad \text{ for all }\,\, (y_1,y_2,p^1_1,p^1_2,p^2_1,p^2_2,h)\in Y.
    $$
    
    \noindent Therefore, the mapping  $A$  is well defined. Similarly, we are able to show that  $A$ is continuous. This ends the proof of Lemma \ref{lema1}.
\end{proof}

\begin{lemma}\label{lema2}
    The mapping $A : Y \rightarrow Z$ is continuously differentiable.
\end{lemma}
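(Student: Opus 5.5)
The plan is the standard one: show that $A$ is Gâteaux differentiable at every point of $Y$, identify the candidate derivative $A'(U)$ as a bounded linear operator from $Y$ to $Z$, and then show that $U\mapsto A'(U)$ is continuous from $Y$ into $\mathcal{L}(Y,Z)$. This yields Fréchet differentiability and the $C^1$ property at once. The components $A_3^1,A_3^2$ are linear and bounded, by the trace estimate already used in \eqref{lemma-eq19}. The terms of $A_0^i,A_i^j$ that are linear in the unknowns, such as $y_{i,t}$, $-h\chi_\omega$, $\frac{1}{\mu_i}\rho_*^{-2}p_1^i\chi_{\omega_i}$, $cy_1$, $cp_2^i$ and $\alpha_iy_j\chi_{\omega_{i,d}}$, are also bounded linear maps, directly from the definition of $\|\cdot\|_Y$. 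So only the genuinely nonlinear pieces need work.

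Fix $U=(y_1,y_2,p^1_1,p^1_2,p^2_1,p^2_2,h)$ and a direction $\bar U=(\bar y_1,\dots,\bar h)$ in $Y$. For $A_0^1$ the candidate derivative is
\[
\bar y_{1,t}-\bigl(a(y_1)\bar y_{1,x}+a'(y_1)y_{1,x}\bar y_1\bigr)_x+F_1'(y_1)\bar y_1-\bar h\chi_\omega+\tfrac{1}{\mu_1}\rho_*^{-2}\bar p^1_1\chi_{\omega_1}+\tfrac{1}{\mu_2}\rho_*^{-2}\bar p^2_1\chi_{\omega_2}.
\]
The derivatives of $A_1^i,A_2^i$ are analogous, with extra terms such as $a'(y_1)\bar y_1 p^i_{1,xx}$, $a''(y_1)\bar y_1 y_{1,x}p^i_{1,x}$, $a'(y_1)\bar y_{1,x}p^i_{1,x}$ and $F_1''(y_1)\bar y_1 p^i_1$. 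I will write each one as (linear part with $a(0)$, $F'(0)$) plus terms like $(a(y_1)-a(0))\bar y_{1,xx}$, $a'(y_1)y_{1,xx}\bar y_1$, $a''(y_1)y_{1,x}^2\bar y_1$, $2a'(y_1)y_{1,x}\bar y_{1,x}$ and $(F_1'(y_1)-F_1'(0))\bar y_1$.

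Each term is bounded in $L^2(\tilde\rho^2,Q)$ by the same mechanism as in Lemma~\ref{lema1}. Put $L^\infty(I)$ on a factor carrying one derivative less, using $H^1_0(I)\hookrightarrow L^\infty(I)$ and the sup-in-time bounds $\sup_t\int_I\hat\rho_1^2|y_{x}|^2$. Put $L^2$ on the remaining factor with the weight $\hat\rho_1$ or $\rho_0$. Then split $\tilde\rho\le C\hat\rho_1^2$ between the two factors, by \eqref{eq:compara_rhos3}. A typical case is
\[
\int_Q\tilde\rho^2|y_{1,xx}|^2|\bar y_1|^2\le C\sup_t\|\hat\rho_1\bar y_{1,x}\|^2_{L^2(I)}\,\|\hat\rho_1 y_{1,xx}\|^2_{L^2(Q)}\le C\|U\|_Y^2\|\bar U\|_Y^2 .
\]
Terms involving $y_{1,x}^2\bar y_1$ or $y_{1,x}\bar y_{1,x}p_{1,x}$ are cubic in the unknowns. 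For these I will put two factors in $L^\infty$ in $x$, which needs $\sup_t\hat\rho_1\|y_x\|_{L^2}$ and $\hat\rho_1\|y_{xx}\|_{L^2(Q)}$, and absorb the extra weight using $\tilde\rho\le C\hat\rho_1^2$ together with the fact that $\hat\rho_1^{-1}$ is bounded. This gives $\|A'(U)\bar U\|_Z\le C(1+\|U\|_Y+\|U\|_Y^2)\|\bar U\|_Y$.

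Next I will check that $\lambda^{-1}\bigl(A(U+\lambda\bar U)-A(U)\bigr)\to A'(U)\bar U$ in $Z$. Using Taylor's formula with integral remainder and the bounds $|a^{(j)}|\le M$, $|F_i^{(j)}|\le M$, the remainder terms are of the form $\lambda\cdot(\text{bounded coefficient})\cdot(\text{products of } U,\bar U)$. The same weighted estimates then bound them by $C|\lambda|\|\bar U\|_Y^2(1+\|U\|_Y+\|\bar U\|_Y)$.

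The main obstacle, in my view, is the continuity of $U\mapsto A'(U)$ in operator norm, in particular for the cubic terms coming from $(a'(y)y_x p_x)$ and $(a(y)p_x)_x$ in the adjoint equations. The $C^3$ regularity of $a$ is needed here, because $a''$ appears in $A'$ and its Lipschitz dependence requires $a'''$. For $U^n\to U$ in $Y$ I will estimate $\|(A'(U^n)-A'(U))\bar U\|_Z$ by differences such as $(a'(y^n_1)-a'(y_1))y^n_{1,xx}\bar y_1$ and $a'(y_1)(y^n_{1,xx}-y_{1,xx})\bar y_1$. The mean value theorem gives $|a'(y^n_1)-a'(y_1)|\le M|y^n_1-y_1|$, which is controlled in $L^\infty$ by $\sup_t\|\hat\rho_1(y^n_{1,x}-y_{1,x})\|$. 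Every such term is then bounded by $C\|U^n-U\|_Y(1+\|U^n\|_Y+\|U\|_Y)^2\|\bar U\|_Y$, and taking the supremum over $\|\bar U\|_Y\le1$ gives continuity.
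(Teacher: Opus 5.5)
Your plan follows essentially the same route as the paper. You compute the Gâteaux derivative component by component, show that the difference quotients converge in $Z$, and prove continuity of $U\mapsto A'(U)$ using $H^1_0(I)\hookrightarrow L^\infty(I)$, the weighted energy bounds for elements of $Y$, and $\tilde\rho\le C\hat\rho_1^2$ with $\hat\rho_1\ge c>0$. You also make explicit two points the paper leaves implicit: that $A'(U)\in\mathcal{L}(Y,Z)$, and that the continuity estimate is uniform over $\|\bar U\|_Y\le 1$. The paper, for its part, handles the Gâteaux step by dominated convergence rather than Taylor remainders.

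One claim needs correcting. Hypothesis \eqref{condiciones_a4f} only gives $F_i\in C^2$ with $|F_i'|,|F_i''|\le M$; there is no bound on $F_i'''$. In the adjoint components, the term $F_1'(y_1)p^k_1$ therefore produces the Gâteaux remainder $\bar y_1 p^k_1\int_0^1\bigl(F_1''(y_1+\theta\lambda\bar y_1)-F_1''(y_1)\bigr)\,d\theta$. In the continuity step it produces $\bigl(F_1''(y^n_1)-F_1''(y_1)\bigr)\bar y_1 p^{k,n}_1$. Neither is $O(|\lambda|)$ or $O(\|U^n-U\|_Y)$, as you assert for "every such term".

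The repair is easy. Since $\|y_i\|_{L^\infty(Q)}\le C\sup_t\|y_{i,x}\|_{L^2(I)}\le C\|U\|_Y$, all arguments of $F_i''$ stay in a fixed compact interval, and $y_i^n\to y_i$ uniformly on $Q$. Uniform continuity of $F_i''$ on that interval then gives $\|F_i''(y_i^n)-F_i''(y_i)\|_{L^\infty(Q)}\to 0$. This yields the required $o(1)$, and in the continuity step it holds uniformly over $\|\bar U\|_Y\le 1$.

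Two smaller remarks. First, only the grouped linear expressions that appear in $\|\cdot\|_Y$ are bounded into $\mathcal M$, for example $L_1y_1-h\chi_\omega+\frac{1}{\mu_1}\rho_*^{-2}p^1_1\chi_{\omega_1}+\frac{1}{\mu_2}\rho_*^{-2}p^2_1\chi_{\omega_2}$. The individual terms $y_{i,t}$, $h\chi_\omega$ or $cy_1$ are not, because \eqref{eq:compara_rhos3} gives $\rho_1\le C\rho_0\le C\tilde\rho$ and not the reverse. Your later decomposition uses exactly these groups, so this is a matter of phrasing.

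Second, the adjoint components are simpler than you expect. Since $-(a(y_1)p^k_{1,x})_x+a'(y_1)y_{1,x}p^k_{1,x}=-a(y_1)p^k_{1,xx}$, the only genuinely cubic terms are $a'(y_i)y_{i,x}^2$ in $A_0^1$ and $A_0^2$. That is where the bound on $a'''$ is actually used.
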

\begin{proof}
    First, we prove that $A$ is Gâteaux derivative at any $(y_1,y_2,p_i^j,h) \in Y$ and compute the
    $\textit{G-derivative}$ $A^{\prime}(y_1,y_2,p_i^j,h)$.
    Consider the linear mapping $D A: Y \to Z$ given in components by
    $$
        D A(y_1,y_2,p_i^j,h) = (D A^1_0,D A^2_0, D A^1_1, D A^2_1, D A^1_2, D A^2_2, D A^1_3, D A^2_3),
    $$
    where, for $i=1,2$,
    \begin{equation*}
        \begin{cases}
		      D A^1_0(\bar y_1,\bar y_2,\bar p_i^j,\bar h) = &  \,
              \bar y_{1,t} - \frac{\partial}{\partial x}(a'(y_{1})\bar y_{1} y_{1,x} +a(y_1) \bar y_{1,x}) \\
            &+  D_1 F_1(y_1,y_2)\bar y_1 + D_2 F_1(y_1,y_2)\bar y_2 - \bar h \chi_{\omega} \\
            &+ \frac{1}{\mu_1} \rho_*^{-2} \bar p^1_1 \chi_{\omega_{1}} + \frac{1}{\mu_2} \rho_*^{-2} \bar p^2_1 \chi_{\omega_{2}}, \\
		      D A^2_0(\bar y_1,\bar y_2,\bar p_i^j,\bar h) =
            &  \, \bar y_{2,t} - (a'(y_{2})\bar y_{2}y_{2,x} + a(y_{2}) \bar y_{2,x})_x \\
            &+ D_1 F_2(y_1,y_2)\bar y_1 + D_2 F_2(y_1,y_2)\bar y_2, \\
            D A^1_i(\bar y_1,\bar y_2,\bar p_i^j,\bar h) =& - \bar p_{1,t}^i - \frac{\partial}{\partial x}\left(a'(y_{1})\bar y_{1} p^i_{1,x} + a(y_{1}) \bar p^i_{1,x}\right) \\
            &+ \left( a''(y_{1})\bar y_{1} y_{1,x} p^i_{1,x} + a'(y_{1})\bar y_{1}  p^i_{1,x} + a'(y_1)y_{1,x} \bar p^i_{1,x} \right) \\
            &+ D_{11}^2 F_1(y_1,y_2)\bar y_1 p^i_1 + D_{12}^2 F_1(y_1,y_2)\bar y_2 p^i_1 \\
            &+ D_{11}^2 F_2(y_1,y_2)\bar y_1 p^i_2 + D_{12}^2 F_2(y_1,y_2)\bar y_2 p^i_2 \\
            &+ D_1 F_1(y_1,y_2) \bar p^i_1 + D_1 F_2(y_1,y_2) \bar p^i_2 - \alpha_i \bar y_1 \chi_{\omega_{i,d}}, \\
		      D A^2_i(\bar y_1,\bar y_2,\bar p_i^j,\bar h) =& - \bar p_{2t}^i - \frac{\partial}{\partial x}\left(a'(y_{2})\bar y_{2} p^i_{2,x} + a(y_{2}) \bar p^i_{2,x}\right) \\
            &+ \left( a''(y_{2})\bar y_{2} y_{2,x} p^i_{2,x} +  a'(y_{2})\bar y_{2,x}  p^i_{2,x} + a'(y_2)y_{2,x} \bar p^i_{2,x} \right)\\
            &+ D_{21}^2 F_1(y_1,y_2)\bar y_1 p^i_1 + D_{22}^2 F_1(y_1,y_2)\bar y_2 p^i_1 \\
            &+ D_{21}^2 F_2(y_1,y_2)\bar y_1 p^i_2 + D_{22}^2 F_2(y_1,y_2)\bar y_2 p^i_2 \\
            &+ D_2 F_1(y_1,y_2) \bar p^i_1 + D_2 F_2(y_1,y_2) \bar p^i_2 
            - \alpha_i \bar y_2 \chi_{\omega_{i,d}}, \\
		      D A^1_3(\bar y_1,\bar y_2,\bar p_i^j,\bar h) =& \, \bar y_1(0),\\
		      D A^2_3(\bar y_1,\bar y_2,\bar p_i^j,\bar h) =& \, \bar y_2(0).
        \end{cases}
    \end{equation*}We have to show that, for $i=0,1,2,3$ and $j=1,2$, 
    $$
        \frac{1}{\lambda}\left[ A^j_i (( y_1, y_2, p_i^j, h)+\lambda(\bar y_1,\bar y_2,\bar p_i^j,\bar h)) - A^j_i ( y_1, y_2, p_i^j, h) \right] \to D A^j_i(\bar y_1,\bar y_2,\bar p_i^j,\bar h),
    $$
    strongly in the corresponding factor of $Z$ as $\lambda \to 0$.

    Initially, we have
    \begin{equation*}		
    	\begin{split}
    		&\left\| \frac{1}{\lambda}\left[ A^1_0 (( y_1, y_2, p_i^j, h)+\lambda(\bar y_1,\bar y_2,\bar p_i^j,\bar h)) - A^1_0( y_1, y_2, p_i^j, h) \right] - D A^1_0(\bar y_1,\bar y_2,\bar p_i^j,\bar h) \right\|^{2}_{\mathcal{M}} \\
    		&=\left\|
    		\frac{1}{\lambda}\Big[ y_{1,t}+\lambda \bar y_{1,t} - \left( a(y_1 + \lambda \bar y_1) (y_{1,x}+\lambda \bar y_{1,x}) \right)_x + F_1(y_1+\lambda \bar y_1, y_2+\lambda \bar y_2)- (h+\lambda \bar h) \chi_{\omega} \right.   \\
    		& \left. \left.    + \frac{1}{\mu_1} \rho_*^{-2} (p^1_1+\lambda \bar p^1_1) \chi_{\omega_1} + \frac{1}{\mu_2} \rho_*^{-2} (p^2_1+\lambda \bar p^2_1) \chi_{\omega_2} -y_{1,t} + \left( a(y_1) y_{1,x} \right)_x - F_1(y_1,y_2) + h \chi_{\omega}   \right. \right.\\
    		& \left. \left.    - \frac{1}{\mu_1} \rho_*^{-2} p^1_1 \chi_{\omega_1} - \frac{1}{\mu_2} \rho_*^{-2} p^2_1 \chi_{\omega_2} \right] -\bar y_{1,t} + (a'(y_{1})\bar y_{1} y_{1,x} +a(y_1) \bar y_{1,x})_x - D_1 F_1(y_1,y_2)\bar y_1 \right.\\
    		&  \left.  - D_2 F_1(y_1,y_2)\bar y_2 + \bar h \chi_{\omega} \right.  \left. -\frac{1}{\mu_1} \rho_*^{-2} \bar p^1_1 \chi_{\omega_{1}} -\frac{1}{\mu_2} \rho_*^{-2} \bar p^2_1 \chi_{\omega_{2}} \right\|^{2}_{\mathcal{M}} \\
    		& = \int_Q \tilde\rho^2 \left| \frac{1}{\lambda}\left( - \left( a(y_1 + \lambda \bar y_1) (y_{1,x}+\lambda \bar y_{1,x}) \right)_x + \left( a(y_1)y_{1,x} \right)_x \right) + (a'(y_{1})\bar y_{1} y_{1,x} +a(y_1) \bar y_{1,x})_x \right|^2 \\
            & + \int_Q \tilde\rho^2 \left| \frac{1}{\lambda}( F_1(y_1+\lambda\bar y_1,y_2 + \lambda\bar y_2) - F_1(y_1,y_2)) - D_1 F_1(y_1,y_2)\bar y_1 - D_2 F_1(y_1,y_2)\bar y_2 \right|^2 \\
            &= J_1+J_2.
    	\end{split}
    \end{equation*}From the Mean Value Theorem, there exists $\theta \in [0,1]$, such that
    \begin{equation*}
        \begin{split}
            J_1 \leq &\int_Q \tilde\rho^2 \left( \left| a''(y_1 + \theta \lambda \bar y_1) - a''(y_1) \right|^2 |\bar y_1 y_{1,x} |^2 - \left| a'(y_1 + \lambda \bar y_1)-a'(y_1) \right|^2 \left| 2 y_{1,x} \bar y_{1,x} \right|^2 \right.\\
            &\left.  + \left| a'(y_1+\lambda\bar y_1)-a'(y_1) \right|^2 |\bar y_1 y_{1,xx}|^2 + |a(y_1+\lambda\bar y_1)-a(y_1)|^2 |\bar y_{1,xx}|^2 \right) 
        \end{split}
    \end{equation*}Since $a \in C^3(I)$,  from estimate abive and Lebesgue's dominated convergence theorem, we get that $J_1$ converges to zero as $\lambda \to 0$.
    
    \noindent Once again, from the Mean Value Theorem, there exists 
    $\theta \in [0,1]$, such that
    \begin{eqnarray*}
    	J_2&=&\int_Q \tilde\rho^2 \left| \frac{1}{\lambda} \nabla F_1(y_1+\theta\lambda\bar y_1,y_2+\theta\lambda\bar y_2)\cdot (\lambda \bar y_1,\lambda \bar y_2) - D_1 F_1(y_1,y_2)\bar y_1 - D_2 F_1(y_1,y_2)\bar y_2 \right|^2\\
        &=& \int_Q \tilde\rho^2 \left| \nabla F_1(y_1+\theta\lambda\bar y_1,y_2+\theta\lambda\bar y_2)\cdot (\bar y_1,\bar y_2) - \nabla F_1(y_1,y_2) \cdot (\bar y_1,\bar y_2) \right|^2\\
        &=& \int_Q \tilde\rho^2 \left| \nabla F_1(y_1+\theta\lambda\bar y_1,y_2+\theta\lambda\bar y_2) - \nabla F_1(y_1,y_2) \right|^2 (\bar y_1^2 + \bar y_2^2 ).
    \end{eqnarray*}
    Therefore, from \eqref{condiciones_a4f}, we obtain that $J_2$ converges to zero as $\lambda \to 0$. 
    
\noindent On the other hand, for $D A^2_0(\bar y_1,\bar y_2,\bar p_i^j,\bar h)$, we get
    \begin{equation*}		
    	\begin{split}
    		&\left\| \frac{1}{\lambda}\left[ A^2_0 (( y_1, y_2, p_i^j, h)+\lambda(\bar y_1,\bar y_2,\bar p_i^j,\bar h)) - A^2_0( y_1, y_2, p_i^j, h) \right] - D A^2_0(\bar y_1,\bar y_2,\bar p_i^j,\bar h) \right\|^{2}_{\mathcal{M}} \\
    		& = \int_Q \tilde\rho^2 \left| \frac{1}{\lambda}\left( - \left( a(y_2 + \lambda \bar y_2) (y_{2,x}+\lambda \bar y_{2,x}) \right)_x + \left( a(y_2)y_{2,x} \right)_x \right) + (a'(y_{2})\bar y_{2} y_{2,x} +a(y_2) \bar y_{2,x})_x \right|^2 \\ 
            &\quad + \int_Q \tilde\rho^2 \left| \frac{1}{\lambda}(F_2(y_1+\lambda\bar y_1,y_2+\lambda\bar y_2)-F_2(y_1,y_2)) - D_1 F_2(y_1,y_2)\bar y_1 - D_2 F_2(y_1,y_2)\bar y_2 \right|^2 \\
            & = J_3+J_4.
    	\end{split}
    \end{equation*}As above, from the Mean Value Theorem and Lebesgue's dominated convergence theorem, it follows that
    \begin{equation*}
        \begin{split}
            J_3 \leq &\int_Q \tilde\rho^2 \left( \left| a''(y_2 + \theta \lambda \bar y_2) - a''(y_2) \right|^2 |\bar y_2 y_{2,x} |^2 - \left| a'(y_2 + \lambda \bar y_2)-a'(y_2) \right|^2 \left| 2 y_{2,x} \bar y_{2,x} \right|^2 \right.\\
            &\left. \qquad + \left| a'(y_2 + \lambda\bar y_2) - a'(y_2) \right|^2 |\bar y_2 y_{2,xx}|^2 + |a(y_2+\lambda\bar y_2)-a(y_2)|^2 |\bar y_{2,xx}|^2 \right) \to 0,
        \end{split}
    \end{equation*} as $\lambda \to 0$ and some  constant $\theta \in [0,1]$ 
    
   \noindent Analogously, 
    $$
        J_4 = \int_Q \tilde\rho^2 \left| \nabla F_1(y_1+\theta\lambda\bar y_1,y_2+\theta\lambda\bar y_2) - \nabla F_1(y_1,y_2) \right|^2 (\bar y_1^2 + \bar y_2^2 ) \to 0,
    $$ as $\lambda \to 0$.
    
    \noindent Moreover, for $A^1_i$, $i=1,2$, we get
    \begin{equation}
    	\begin{split}
    		&\left\| \frac{1}{\lambda}\left[ A^1_i (( y_1, y_2, p_i^j, h)+\lambda(\bar y_1,\bar y_2,\bar p_i^j,\bar h)) - A^1_i( y_1, y_2, p_i^j, h) \right] - D A^1_i(\bar y_1,\bar y_2,\bar p_i^j,\bar h) \right\|^{2}_{\mathcal{M}} \\
            &= \left\| \frac{1}{\lambda} \left[ 
            - \left(a(y_1+\lambda\bar y_1)(p_{1,x}^1 + \lambda \bar p_{1,x}^1) + a'(y_1+\lambda\bar y_1)(y_{1,x}+\lambda\bar y_{1,x})(p_{1,x}^1+\lambda\bar p_{1,x}^1) \right.\right.\right. \\
            &\left.\left.\left. \qquad + a(y_1)p_{1,x}^1 - a'(y_1)y_{1,x}p_{1,x}^1 \right)_x \right] + \left(a'(y_{1})\bar y_{1} p^i_{1,x} + a(y_{1}) \bar p^i_{1,x}\right)_x \right.\\
            &\qquad \left. - \left( a''(y_{1})\bar y_{1} y_{1,x} p^i_{1,x} + a'(y_{1})\bar y_{1}  p^i_{1,x} + a'(y_1)y_{1,x} \bar p^i_{1,x} \right)
            \right\|_{L^2(\tilde\rho^2,Q)}^2 \\
            &\quad + \left\| \frac{1}{\lambda} \left[ D_1 F_1(y_1+\lambda \bar y_1,y_2+\lambda \bar y_2) (p^i_1 +\lambda \bar p^i_1) + D_1 F_2(y_1+\lambda \bar y_1,y_2+\lambda \bar y_2) (p^i_2 +\lambda \bar p^i_2) \right.\right. \\
            &\left.\left.\qquad - D_1 F_1(y_1,y_2) p^i_1 - D_1 F_2(y_1,y_2) p^i_2 \right] - D_{11}^2 F_1(y_1,y_2)\bar y_1 p^i_1 - D_{12}^2 F_1(y_1,y_2)\bar y_2 p^i_1 \right. \\
            &\left.\qquad - D_{11}^2 F_2(y_1,y_2)\bar y_1 p^i_2 - D_{12}^2 F_2(y_1,y_2)\bar y_2 p^i_2 
            - D_1 F_1(y_1,y_2) \bar p^i_1 - D_1 F_2(y_1,y_2) \bar p^i_2
            \right\|^{2}_{\mathcal{M}} \\
            &= J_5+J_6.
    	\end{split}
    \end{equation}We observe that  $J_5$  is formed by two terms: $J_{51}$ and $J_{51}$, satisfying
    \begin{equation*}
        \begin{split}
            J_{51} \leq 2\left\| \left( \frac{1}{\lambda} \left[ a(y_1+\lambda\bar y_1)(p^1_{1,x}+\lambda\bar p^i_{1,x} ) - a(y_1)p^i_{1,x} \right] - a'(y_1)\bar y_1 p^i_{1,x} + a(y_1)\bar p^i_{1,x} \right)_x\right\|^{2}_{\mathcal{M}}
        \end{split}
    \end{equation*}
    and
    \begin{equation*}
        \begin{split}
            J_{52} \leq &2\left\| \left( \frac{1}{\lambda} \left[ a'(y_1+\lambda\bar y_1)(y_{1,x}-\lambda\bar y_{1,x})(p^1_{1,x}+\lambda\bar p^i_{1,x} ) - a'(y_1)y_{1,x}p^i_{1,x} \right] \right.\right. \\
            &\left.\left. \qquad - (a''(y_1)\bar y_1 y_{1,x} p^1_{1,x} + a'(y_1)\bar y_{1,x} p^1_{1,x} + a'(y_1)y_{1,x}\bar p^i_{1,x}) \right)_x\right\|^{2}_{\mathcal{M}}.
        \end{split}
    \end{equation*}From the above estimates, calculations similar to the previous ones allow us to obtain that $J_{51}$ and $J_{52}$ converge to zero as $\lambda \to 0$.

    \noindent Since 
    \begin{equation*}
    J_6 \leq     J_{61} + J_{62} + J_{63},
    \end{equation*}where
    \begin{equation*}
    \begin{cases}
    J_{61} &= \displaystyle \int_Q \tilde\rho^2 \Big| \frac{1}{\lambda}\left[D_1 F_1(y_1+\lambda \bar y_1,y_2+\lambda \bar y_2) p^i_1 - D_1 F_1(y_1,y_2) p^i_1 \right] - \\
   & \quad D_{11}^2 F_1(y_1,y_2)\bar y_1 p^i_1 - D_{12}^2 F_1(y_1,y_2)\bar y_2 p^i_1\Big|^2, \\
   J_{62} &= \displaystyle \int_Q \tilde\rho^2 \Big| \frac{1}{\lambda}\left[ D_1 F_2(y_1+\lambda \bar y_1,y_2+\lambda \bar y_2) p^i_2  - D_1 F_2(y_1,y_2) p^i_2 \right] \\
   & \quad - D_{11}^2 F_2(y_1,y_2)\bar y_1 p^i_2 - D_{12}^2 F_2(y_1,y_2)\bar y_2 p^i_2 \Big|^2,\\
   J_{63} &= \displaystyle \int_Q \tilde\rho^2 \Big| D_1 F_1(y_1+\lambda \bar y_1,y_2+\lambda \bar y_2) \bar p^i_1 + D_1 F_2(y_1+\lambda \bar y_1,y_2+\lambda \bar y_2) \bar p^i_2 \\
   & \quad D_1 F_1(y_1,y_2) \bar p^i_1 - D_1 F_2(y_1,y_2) \bar p^i_2 \Big|^2,
    \end{cases}
    \end{equation*}
    
    \noindent from the Mean Value Theorem, for $\theta \in [0,1]$,
    \begin{equation*}
        \begin{split}
            J_{61} &\leq \int_Q \tilde\rho^2 \left| \frac{1}{\lambda}
            \nabla (D_1 F_1) (y_1+ \theta\lambda \bar y_1,y_2+\theta\lambda \bar y_2)\cdot(\lambda \bar y_1,\lambda \bar y_2) p^i_1 - \nabla (D_1 F_1)(y_1,y_2)\cdot(\bar y_1,\bar y_2)p^i_1
            \right|^2,
        \end{split}
    \end{equation*}
    and the fact that $D_1 F \in C^1$, we get that $J_{31}$ converges to zero as $\lambda \to 0$. 
    
   \noindent  From similar computations, we find  that $J_{62}$ and $F_{63}$ converges to zero as $\lambda \to 0$
    
    For $A^2_i$, $i=1,2$, a similar estimation gives
    $$
        \left\| \frac{1}{\lambda}\left[ A^1_i (( y_1, y_2, p_i^j, h)+\lambda(\bar y_1,\bar y_2,\bar p_i^j,\bar h)) - A^1_i( y_1, y_2, p_i^j, h) \right] - D A^1_i(\bar y_1,\bar y_2,\bar p_i^j,\bar h) \right\|^{2}_{\mathcal{M}} \to 0,
    $$
    as $\lambda \to 0$. This finishes the proof that $A$ is Gateaux differentiable, with a \textit{G-derivative} given by $D A( y_1, y_2, p_i^j, h)$.
    
     Now, take $( y_1, y_2, p_i^j, h)\in Y$ and let $\{(y_{1,n},y_{2,n},p^{j}_{i,n},h_{n})\}_{n=0}^{\infty}$ be a sequence that converges to  $( y_1, y_2, p_i^j, h)$ in $Y$. For each $(\bar y_1,\bar y_2,\bar p_i^j,\bar h)\in B_{r}(0)$, we have to prove that 
    \begin{equation}\label{eq:continuity_DA_m_l}
    	\begin{split}
    		&\left\|(D A^l_m(y_{1,n},y_{2,n},p^{j}_{i,n},h_{n})-D A^l_m( y_1, y_2, p_i^j, h))(\bar y_1,\bar y_2,\bar p_i^j,\bar h) \right\|^{2}_{\mathcal{M}}\rightarrow 0,
    	\end{split}
    \end{equation}
    as $\lambda \to 0$, for $m=0,1,2,3$ and $l=1,2$.
    
    We present the details for $m=0$ and $l=1$. The other terms are estimated analogously, and we omit them. We have
    \begin{equation*}
        \begin{split}
    	   &(D A^1_0(y_{1,n},y_{2,n},p^{j}_{i,n},h_{n})-D A^1_0( y_1, y_2, p_i^j, h))(\bar y_1,\bar y_2,\bar p_i^j,\bar h)\\
           &= \left(a'(y_{1,n})\bar y_{1} y_{1,nx} - a'(y_{1})\bar y_{1} y_{1,x} + a(y_{1,n}) \bar y_{1,x} - a(y_1) \bar y_{1,x} \right)_x \\
           &\quad + [ (D_1 F_1(y_{1,n},y_{2,n})-D_1 F_1(y_1,y_2))\bar y_1 + (D_2 F_1(y_{1,n},y_{2,n})-D_2 F_1(y_1,y_2))\bar y_2] \\
           &= ((a'(y_{1,n})-a'(y_{1}))\bar y_{1} y_{1,nx})_x + (a'(y_{1,n})\bar y_{1} (y_{1,nx}-y_{1,x}))_x + ((a(y_{1,n})-a(y_{1})) \bar y_{1,x})_x \\&\quad + [ (D_1 F_1(y_{1,n},y_{2,n})-D_1 F_1(y_1,y_2))\bar y_1 + (D_2 F_1(y_{1,n},y_{2,n})-D_2 F_1(y_1,y_2))\bar y_2] \\
           &= X_0^{11}+X_0^{12}+X_0^{13}+X_0^{14}.
        \end{split}
    \end{equation*}First, 
    \begin{eqnarray*}
        \int_{Q} \tilde\rho^{2}|X_{0}^{11}|^{2} dxdt
    	&\leq&C\int_{Q} \tilde\rho^{2} M^2 \left( \left|(y_{1,n}-y_1)_x \bar y_1 y_{1,nx} \right|^2 + \left|(y_{1,n}-y_1) \bar y_{1,x} y_{1,nx}  \right|^2 \right. \\
        &&\left. \qquad\quad +\left|(y_{1,n}-y_1) \bar y_1 y_{1,nxx} \right|^2 \right) dxdt.
    \end{eqnarray*}
    Using embedding $H^1_0(I) \hookrightarrow L^\infty(I)$ and 
    \eqref{eq:compara_rhos3}, we get
    \begin{equation*}
        \begin{split}
            &\int_{Q} \tilde\rho^{2} \left|(y_{1,n}-y_1)_x \bar y_1 y_{1,nx} \right|^2 dx dy \\
            &\leq C\int_0^T \tilde\rho^{2} \|y_{1,nx}-y_{1,x}\|_{L^\infty(I)}^2 \|\bar y_1\|_{L^\infty(I)}^2 \|y_{1,nx}\|_{L^2(I)}^2 dt \\
            &\leq C \int_0^T \tilde\rho^{2} \hat\rho_1^{-6} \hat\rho_1^2 \|y_{1,nxx}-y_{1,xx}\|_{L^2(I)}^2 \hat\rho_1^2 \|\bar y_{1,x}\|_{L^2(I)}^2 \hat\rho_1^2 \|y_{1,nx}\|_{L^2(I)}^2 dt \\
            &\leq C \sup_{t\in[0,T]} \hat\rho_1^2 \|\bar y_{1,x}\|_{L^2(I)}^2 \sup_{t\in[0,T]} \hat\rho_1^2 \|y_{1,nx}\|_{L^2(I)}^2  \int_0^T \hat\rho_1^2 \|y_{1,nxx}-y_{1,xx}\|_{L^2(I)}^2 dt \\
            &\leq C \|(\bar y_1,\bar y_2,\bar p_i^j,\bar h)\|_{Y} \|(y_1,y_2,p_i^j,h)\|_{{Y}} \| (y_{1,n},y_{2,n},p_{i,n}^j,h_n) - (y_1,y_2,p_i^j,h) \|_{{Y}}.
        \end{split}
    \end{equation*}Proceeding similarly with the other terms of $X_0^{11}$, we get
    \begin{equation*}
        \begin{split}
                &\int_{Q} \tilde\rho^{2}|X_{0}^{11}|^{2} dxdt \\
                &\leq C \|(\bar y_1,\bar y_2,\bar p_i^j,\bar h)\|_{{Y}} \|(y_1,y_2,p_i^j,h)\|_{{Y}} \| (y_{1,n},y_{2,n},p_{i,n}^j,h_n) - (y_1,y_2,p_i^j,h) \|_{{Y}}.
        \end{split}
    \end{equation*}Analogously, for $X_0^{12}$ and $X_0^{13}$, 
    \begin{equation*}
        \begin{split}
            &\int_{Q} \tilde\rho^{2} (|X_{0}^{12}|^{2} + |X_{0}^{13}|^{2} )dxdt\\
            &\leq C\int_{Q} \tilde\rho^{2} M^2 \left( |(\bar y_1 (y_{1,nx}-y_{1,x}))_x|^2 + |((y_{1,n}-y_1) \bar y_{1,x})_x|^{2} \right) dxdt \\
            &\leq C\|(\bar y_1,\bar y_2,\bar p_i^j,\bar h)\|_{{Y}} \| (y_{1,n},y_{2,n},p_{i,n}^j,h_n) - (y_1,y_2,p_i^j,h) \|_{{Y}}.
        \end{split}
    \end{equation*}
    Therefore,
    \begin{equation*}
        \int_{Q} \tilde\rho^{2} (|X_{0}^{11}|^{2} + |X_{0}^{12}|^{2} + |X_{0}^{13}|^{2} )dxdt \to 0, \qquad\text{as}\quad n\to \infty.
    \end{equation*}
    On the other hand,
    \begin{equation*}
        \begin{split}
        	\int_{Q} \tilde\rho^{2} |X_{0}^{14}|^{2} dxdt&\leq  \int_{Q} \tilde\rho^{2} |(D_1 F_1(y_{1,n},y_{2,n})-D_1 F_1(y_1,y_2))|^2 |\bar y_1|^{2}dxdt\\
            & + \int_{Q} \tilde\rho^{2} |(D_2 F_1(y_{1,n},y_{2,n})-D_2 F_1(y_1,y_2))|^2 |\bar y_2|^{2}dxdt\\
        	&\leq C\int_{Q} \tilde\rho^{2} M^2 ( |y_{1,n}-y_1|^2 + |y_{2,n}-y_2|^2 ) (|\bar y_1|^{2} + |\bar y_2|^{2} ) dxdt.
        \end{split}
    \end{equation*}
    
    \noindent From  \eqref{eq:compara_rhos3}, we have  that  $\tilde\rho^2\leq C \rho_1^2 \rho^2_1\leq C \hat{\rho}^2 \rho_0^2 $. Then
    \begin{equation*}
    \begin{split}
    	&\int_{Q} \tilde\rho^{2}|X_{0}^{14}|^{2} dxdt\\
        &\leq C \int_{Q} \hat{\rho}_1^2 \rho_0^2  ( |y_{1,n}-y_1|^2 + |y_{2,n}-y_2|^2 ) (|\bar y_1|^{2} + |\bar y_2|^{2} )dxdt\\
    	&\leq C \int_0^T \left(\hat{\rho}_1^2 (\| \bar{y}_1\|^2_{L^\infty(I)} + \| \bar{y}_2\|^2_{L^\infty(I)})\right)\int_{0}^{1} \rho_0^2  (|y_{1,n}-y_1|^2 + |y_{2,n}-y_2|^2 ) dt\\
        &\leq C \int_0^T \sup_{t\in [0,T]}\left(\hat{\rho}_1^2 (\| \bar{y}_{1,x}\|^2_{L^2(I)} + \| \bar{y}_{2,x}\|^2_{L^2(I)})\right) \left( \|\rho_0 (y_{1,n}-y_1)\|_{L^2(I)} + \|\rho_0 (y_{2,n}-y_2)\|_{L^2(I)} \right) dt\\
    	&\leq C\|(\bar y_1,\bar y_2,\bar p_i^j,\bar h)\|_{{Y}} \cdot \left( \|\rho_0 (y_{1,n}-y_1)\|_{L^2(Q)} + \|\rho_0 (y_{2,n}-y_2)\|_{L^2(Q)} \right)\\
    	&\leq C\|(y_{1,n}-y_1),(y_{2,n}-y_2),(p^{j}_{i,n}-p^{j}_i),(h_{n}-h)\|_{{Y}}\rightarrow 0 \quad\text{as}\quad n \to \infty.
    \end{split}
    \end{equation*}
    This shows that \eqref{eq:continuity_DA_m_l} is satisfied. 
\end{proof}

 \begin{lemma}\label{lema3}
 Let $A$ be the mapping in \eqref{aplicação A}. Then, $A^{\prime}(0,0,0,0,0,0,0)$ is onto.
 \end{lemma}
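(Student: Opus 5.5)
The plan is to compute $A'(0)$ explicitly and reduce surjectivity to the linear controllability result, Theorem \ref{teo:linearized_control}. Evaluating the formula for $DA$ from Lemma \ref{lema2} at the origin, every term carrying a factor $y_{i,x}$, $p^i_{j}$, $p^i_{j,x}$, or a difference $a(y_i)-a(0)$ vanishes. Hence, for $(\bar y_1,\bar y_2,\bar p^j_i,\bar h)\in Y$,
\[
A'(0)(\bar y_1,\bar y_2,\bar p^j_i,\bar h)=\Big(L_1\bar y_1-\bar h\chi_\omega+\tfrac{1}{\mu_1}\rho_*^{-2}\bar p^1_1\chi_{\omega_1}+\tfrac{1}{\mu_2}\rho_*^{-2}\bar p^2_1\chi_{\omega_2},\ L_2\bar y_2+c\bar y_1,\ L_1^*\bar p^1_1+c\bar p^1_2-\alpha_1\bar y_1\chi_{\omega_{1,d}},\ \dots,\ \bar y_1(0),\ \bar y_2(0)\Big).
\]
Here $L_i=\partial_t-a(0)\partial_{xx}+F_i'(0)$. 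The components are exactly the left-hand sides of the linearized system \eqref{lin1}.

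Given an arbitrary target $(G_1,\dots,G_6,f,g)\in Z$, we have $\tilde\rho G_i\in L^2(Q)$ and $(f,g)\in[H^1_0(I)]^2$. These are precisely the hypotheses of Theorem \ref{teo:linearized_control}, for $\mu_i$ large and $\omega_d\cap\omega\neq\emptyset$. Applying that theorem gives a control-state $(y_1,y_2,p^1_1,p^2_1,p^1_2,p^2_2,h)$ solving \eqref{lin1} with these data, with $\rho_1 h\in L^2(\omega\times(0,T))$ and $\rho_0 y_i,\rho_0 p^i_j\in L^2(Q)$, by \eqref{est-linear1}. We then check that this tuple lies in $Y$:

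\begin{itemize}
\item The weighted residual terms in the definition of $Y$ equal $\tilde\rho G_k$, by the equations in \eqref{lin1}, so they belong to $L^2(Q)$.
\item $y_i(0)=f,g\in H^1_0(I)$.
\item The boundary conditions hold.
\item $y_i,y_{i,x}\in L^2(Q)$ follows from \eqref{est-linear2}, since $\hat\rho_0$ is bounded below by a positive constant.
\end{itemize}

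Since $A'(0)$ applied to this tuple returns $(G_1,\dots,G_6,f,g)$, the map $A'(0)$ is onto.

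The main point to watch is that the solution produced by Theorem \ref{teo:linearized_control} is a transposition solution. We must make sure it is regular enough for the strong form of the equations to hold and for the $Y$-norm to be finite. Uniqueness of transposition solutions, Theorem \ref{wellpo_trans_sis}, together with the weighted energy estimates \eqref{est-linear2}–\eqref{est-linear3}, gives $y_{i,t},y_{i,xx}\in L^2_{loc}$ with weights. This shows the equations hold a.e., and it settles the point.
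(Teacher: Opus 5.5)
Your proposal is correct and takes the same route as the paper. You identify $A'(0)$ with the operator of the linearized system \eqref{lin1}, then invoke Theorem~\ref{teo:linearized_control} to obtain, for any target in $Z$, a preimage that the bounds \eqref{est-linear1}--\eqref{est-linear3} place in $Y$. Your explicit computation of $A'(0)$, the item-by-item check of membership in $Y$, and the regularity remark simply spell out what the paper asserts in one line; the regularity could also be obtained directly from standard parabolic regularity, since every coupling term lies in $L^2(Q)$.
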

\begin{proof}
    Let $(F_0, \overline{F}_0,F_1, \overline{F}_1,F_2, \overline{F}_2,y^{0}_1, y^{0}_2)\in {Z}$. From Theorem \ref{teo:linearized_control} we know that there exist functions $y_1,y_2,p^{1}_1,p^{2}_1,p^{1}_2,p^{2}_2$ satisfying \eqref{lin1} and the estimates \eqref{est-linear1}, \eqref{est-linear2} and \eqref{est-linear3}. 
    Consequently, $(y,p^{1},p^{2},h)\in {Y}$ and $$A^{\prime}(0,0,0,0,0,0,0)(y_1,y_2,p^{1}_1,p^{2}_1,p^{1}_2,p^{2}_2,h)=(F_0, \overline{F}_0,F_1, \overline{F}_1,F_2, \overline{F}_2,y^{0}_1,y^0_2).$$ 
    \hfill

    This ends the proof.
\end{proof}

\subsection{Proof of main theorem}

In accordance with Lemmas \eqref{lema1}, \eqref{lema2} e \eqref{lema3}, we can apply Liusternik’s Theorem and deduce that there exists $\epsilon > 0$, a mapping $W : B_\epsilon(0) \subset Z \rightarrow Y$ such that 

\begin{itemize}
    \item $W (G_1, G_2, G_3, G_4, G_5, G_6,f,g) \in B_r(0)$, 
    \item $A(W (G_1, G_2, G_3, G_4, G_5, G_6,f,g)) = (G_1, G_2,     G_3, G_4, G_5, G_6,f,g)$ \\
        $\forall (G_1, G_2, G_3, G_4, G_5, G_6,f,g) \in B_\epsilon(0)$.
\end{itemize}


\noindent Taking $(0, 0, -\alpha_1y^1_{1,d}\chi_{\omega_{1,d}},-\alpha_2y^2_{1,d}\chi_{\omega_{2,d}},-\alpha_1y^1_{2,d}\chi_{\omega_{1,d}},-\alpha_2y^2_{2,d}\chi_{\omega_{2,d}}, f,g) \in B_\epsilon(0)$, and
$$
    (y_1,y_2,p^1_1,p^1_2,p^2_1,p^2_2,h) = W (0, 0, -\alpha_1y^1_{1,d}\chi_{\omega_{1,d}},-\alpha_2y^2_{1,d}\chi_{\omega_{2,d}},-\alpha_1y^1_{2,d}\chi_{\omega_{1,d}},-\alpha_2y^2_{2,d}\chi_{\omega_{2,d}}, f,g) \in Y,
$$
we have that 
$$
    A((y_1,y_2,p^1_1,p^1_2,p^2_1,p^2_2,h)) = W (0, 0, -\alpha_1y^1_{1,d}\chi_{\omega_{1,d}},-\alpha_2y^2_{1,d}\chi_{\omega_{2,d}},-\alpha_1y^1_{2,d}\chi_{\omega_{1,d}},-\alpha_2y^2_{2,d}\chi_{\omega_{2,d}}, f,g).
$$

Thus, we proved that \eqref{to1} is null locally controllable at time $T > 0$.

\hfill\qed

\noindent\textbf{Acknowledgments}

\noindent Cristian Loli was  supported by  CAPES - grant number 88887.836417/2023-00.  George J. Bautista was  supported by FAPERJ under the program PDS-2024, grant number SEI-0260003/019497/2024. Juan Límaco was partially supported by CNPq grant number 310860/2023-7.

\bibliographystyle{abbrv}
\bibliography{bib}	

\end{document}